\documentclass[11pt]{amsart}
\usepackage{amsfonts,latexsym,amsthm,amssymb,amsmath,amscd,euscript,tikz, tikz-cd}
\usepackage[alphabetic, msc-links, bibtex-style, nobysame]{amsrefs}
\usepackage{stackengine}
\usepackage{framed}
\usepackage{xfrac}
\usepackage[makeroom]{cancel}
\usepackage{faktor}
\usepackage{braket}
\usepackage{pgf,tikz,pgfplots}
\usepgfplotslibrary{groupplots}
\pgfplotsset{compat=1.18}
\usepgfplotslibrary{fillbetween} 
\usepackage{mathrsfs}
\usetikzlibrary{arrows}
\definecolor{wrwrwr}{rgb}{0.3803921568627451,0.3803921568627451,0.3803921568627451}
\definecolor{rvwvcq}{rgb}{0.08235294117647059,0.396078431372549,0.7529411764705882}
\definecolor{mblue}{rgb}{0.2, 0.3, 0.8}
\definecolor{morange}{rgb}{1, 0.5, 0}
\definecolor{mgreen}{rgb}{0.1, 0.4, 0.2}
\definecolor{mred}{rgb}{0.5, 0, 0}
\definecolor{ForestGreen}{RGB}{34,139,34}
\usepackage{float}

\numberwithin{equation}{section}

\usepackage{lmodern}
\usepackage{alphabeta}
\usepackage{supertabular}
\usepackage{amssymb}
\usepackage{enumerate}
\usepackage{stmaryrd}
\usepackage{bbm}
\usepackage{mathtools}
\usepackage{setspace}
\usepackage{tikz,tikz-cd}
\usetikzlibrary{matrix,calc,positioning,arrows,decorations.pathreplacing,patterns,knots}

\newcommand{\la}{\langle}
\newcommand{\rg}{\rangle}

\newtheorem{theorem}{{Theorem}}[section]
\newtheorem*{theorem*}{Theorem}
\newtheorem{lemma}[theorem]{Lemma}
\newtheorem{proposition}[theorem]{Proposition}

\newtheorem{corollary}[theorem]{Corollary}
\newtheorem*{corollary*}{Corollary}

\theoremstyle{definition}
\newtheorem{definition}{Definition}
\newtheorem{remark}{Remark}
\newtheorem{assumption}{Assumption}

\usepackage{lmodern,url,enumerate,mathtools}
\usepackage[hmargin = 1in,vmargin=1in]{geometry}
\usepackage{graphicx}
\usepackage{subcaption}

\usepackage{hyperref}
    \hypersetup{colorlinks=true,citecolor=ForestGreen,linkcolor = blue,urlcolor =black,linkbordercolor={1 0 0}}

\newcommand{\ve}{\varepsilon}

\newcommand{\mr}[1]{{\rm #1}}
\newcommand{\mres}{\mathbin{\vrule height 1.6ex depth 0pt width
0.13ex\vrule height 0.13ex depth 0pt width 1.3ex}}

\newcommand{\cA}{\mathcal{A}}\newcommand{\cB}{\mathcal{B}}

\newcommand{\cE}{\mathcal{E}}
\newcommand{\cH}{\mathcal{H}}

\newcommand{\cL}{\mathcal{L}}
\newcommand{\cN}{\mathcal{N}}

\newcommand{\cR}{\mathcal{R}}

\newcommand{\cU}{\mathcal{U}}

\newcommand{\bE}{\mathbb{E}}
\newcommand{\bG}{\mathbb{G}}

\newcommand{\bN}{\mathbb{N}}

\newcommand{\bR}{\mathbb{R}}

\newcommand{\nc}{\newcommand}
\nc{\sn}{\mr{sn}}
\nc{\cn}{\mr{cn}}
\nc{\dn}{\mr{dn}}

\allowdisplaybreaks

\nc{\on}{\operatorname}
\nc{\p}{\partial}
\nc{\ol}{\overline}
\nc{\ul}{\underline}
\nc{\pa}{\partial}

\nc{\pb}{\partial_b}
\nc{\pc}{\partial_c}
\nc{\pd}{\partial_d}
\nc{\pe}{\partial_e}
\nc{\pf}{\partial_f}
\nc{\pg}{\partial_g}
\nc{\ph}{\partial_h}
\nc{\pari}{\partial_i}
\nc{\pj}{\partial_j}
\nc{\pk}{\partial_k}
\nc{\pl}{\partial_l}
\nc{\pell}{\partial_\ell}
\nc{\parm}{\partial_m}
\nc{\pn}{\partial_n}
\nc{\po}{\partial_o}
\nc{\pp}{\partial_p}
\nc{\pq}{\partial_q}
\nc{\pr}{\partial_r}
\nc{\ps}{\partial_s}
\nc{\pt}{\partial_t}
\nc{\pu}{\partial_u}
\nc{\pv}{\partial_v}
\nc{\pw}{\partial_w}
\nc{\px}{\partial_x}
\nc{\py}{\partial_y}
\nc{\pz}{\partial_z}

\numberwithin{equation}{section}
\makeatletter
\@addtoreset{equation}{section}
\makeatother

\usepackage{mathtools}
\mathtoolsset{showonlyrefs}

\author[Antonio De Rosa]{Antonio De Rosa}
\address{{\href{mailto:antonio.derosa@unibocconi.it}{antonio.derosa@unibocconi.it}} \hfill Department of Decision Sciences and BIDSA, Bocconi University}
\author[Benjy Firester]{Benjy Firester}
\address{{\href{mailto:benjyfir@mit.edu}{benjyfir@mit.edu}} \hfill Department of Mathematics, MIT}
\author[Raphael Tsiamis]{Raphael Tsiamis}
\address{
{\href{mailto:r.tsiamis@columbia.edu}{r.tsiamis@columbia.edu}}\hfill Department of Mathematics, Columbia University}

\title[Regularity of anisotropic stationary varifolds]{Regularity of varifolds with bounded anisotropic first variation} 

\date{\today}

\begin{document}

\begin{abstract}
We prove an $\varepsilon$-regularity theorem for $m$-varifolds with mean curvature in $L^p$, $p>m$, with respect to an anisotropic integrand satisfying a Michael-Simon inequality and the quadratic exposed condition: near sufficiently flat density-one points, such varifolds are representable as $C^{1,\alpha}$ graphs. Combined with the recent proof of the anisotropic Michael-Simon inequality, this establishes an anisotropic Allard regularity theorem in arbitrary codimension for a large class of anisotropic integrands, including those close to the area functional. We also exhibit the first examples of anisotropies satisfying both the uniform scalar atomic condition and the Michael-Simon inequality, that are not close to any ellipsoidal norm. These include the $\ell^q$ norms in every dimension and codimension, for explicit ranges of $q$, and a new class of axisymmetric anisotropies.
\end{abstract}

\maketitle

\section{Introduction}
We prove an $\ve$-regularity theorem for integral $m$-varifolds in $\bR^N$ with anisotropic mean curvature in $L^p$, for $p>m$, with respect to an anisotropic integrand satisfying a Michael-Simon inequality
and the quadratic exposed condition: they are locally representable as $C^{1,\alpha}$ graphs near sufficiently flat density-one points.
This result establishes an anisotropic analogue of Allard's celebrated regularity theorem in every dimension and codimension~\cite{allard-first-variation}.
Allard's regularity theorem is a fundamental link between the measure-theoretic and the classical theories of minimal surfaces.
Besides its intrinsic relevance, this $\varepsilon$-regularity theorem is an important input in compactness, blow-up, and min-max arguments throughout geometric analysis; see, for instance,~\cites{simon-gmt,delellis-allard}.

For anisotropic energies, several key ingredients of the isotropic theory are no longer available.
Allard observed that, unless the integrand is induced by an inner product, the usual monotonicity formula fails~\cite{allard-monotonicity}.
In particular, the mass ratios may not be monotone and the blowup limits are not necessarily cones.
More importantly for regularity, the lower density estimates that enter the isotropic argument cannot be obtained from monotonicity.
This difficulty recurs throughout the theory of anisotropic minimal surfaces, including Plateau problems, min-max constructions, and geometric flows; we refer to~\cites{derosa-survey,anisotropic-min-max,DePhilippisDeRosaLi2024,derosa-halavati-wang} for further background and recent developments.

In~\cite{allard-regularity}, Allard obtained an anisotropic regularity theorem for codimension-one varifolds, assuming a uniform lower bound for mass ratios and employing a barrier argument specialized to hypersurfaces.
Complementary recent work of Kolasi\'nski and Santilli proves quadratic flatness and countable $C^2$-rectifiability under bounded anisotropic mean curvature, and establishes locality and perpendicularity properties of the anisotropic mean curvature \cites{kolasinski-santilli,perpendicularity-and-locality}.

In arbitrary codimension, substantial progress grew from the development of quantitative ellipticity conditions adapted to anisotropic stationary varifolds.
De Philippis, De Rosa, and Ghiraldin~\cite{derosa-ghiraldin} introduced the atomic condition and proved an Allard-type rectifiability theorem under locally bounded anisotropic first variation. The atomic condition was thoroughly investigated, and proved to imply Almgren strict ellipticity by De Rosa, and Kolasi\'nski~\cites{derosa-kolasinski}, and also to imply strict polyconvexity by De Rosa, Lei, and Young~\cites{derosa-lei-young}.
Subsequent work of De Rosa, and Tione~\cites{derosa-tione-regularity} introduced the uniform scalar atomic condition (USAC), that is a stronger quantitative form of the atomic condition, which provides a Caccioppoli inequality, leading to partial $C^{1,\alpha}$ regularity for varifolds already known to be Lipschitz graphs. This interior regularity result was later extended to boundary points \cite{DRR}.
More recently, Jakimiuk, Kolasi\'nski, and Le\'sniak introduced a further refinement of USAC called the quadratic exposed condition (QEC)~\cite{atomic-condition}, which still provides the quantitative coercivity needed for regularity, while allowing a larger class of anisotropic integrands.
However, one of the main geometric steps of Allard's theorem, that is obtaining the graphical representation itself, remained open in arbitrary codimension due to the absence of the necessary density estimates.

In place of a monotonicity formula, the anisotropic Michael-Simon inequality provides the key ingredient for obtaining the density estimates required for the regularity argument~\cite{michael-simon}.
Firester and Tsiamis~\cite{anisotropic-michael-simon} recently proved such an inequality in arbitrary dimension and codimension under a general spectral condition.
In particular, their result applies to convex anisotropic integrands in the hypersurface case, suitable perturbations of the area integrand in arbitrary dimension and codimension, and certain $\ell^p$ anisotropies.
Combined with Allard's work~\cite{allard-regularity}, it also yields the regularity of hypersurfaces with
bounded anisotropic mean curvature in every dimension.
Earlier, De Philippis and Pigati established an anisotropic Michael-Simon inequality for surfaces in $\bR^3$ and showed, for integrands satisfying the atomic condition, that the Michael-Simon inequality is equivalent to compactness of rectifiable varifolds with bounded anisotropic first variation~\cite{dephilippis-pigati}.

The purpose of this paper is to combine these advances and complete the Allard scheme for anisotropic varifolds in arbitrary dimension and codimension and anisotropies $\Psi$ as follows.
\begin{assumption}\label{ass:our-ass}
Consider an anisotropy $\Psi \in C^2( \bG(N,m), (0,\infty))$ satisfying the quadratic exposed condition \textup{QEC}.
We suppose that $\Psi$ supports a Michael-Simon inequality: there exists a constant $C(m,N,\Psi)< \infty$ such that every $m$-varifold $V$ with finite mass, finite first variation with respect to $\Psi$, and $\Theta^m ( \|V\|, x) \geq \theta_0$ for $\|V\|$-a.e. $x$, satisfies
\[
\|V\|(\bR^N)^{\frac{m-1}{m}} \leq C(m,N,\Psi) \, \theta_0^{- \frac{1}{m}} \, |\delta_{\Psi} V|(\bR^N).
\]
\end{assumption}
It is expected that the Michael-Simon inequality should hold for every anisotropy satisfying AC.
Our first main result is the anisotropic version of Allard's interior regularity theorem.
\begin{theorem}\label{thm:allard}
Let $\Psi$ satisfy Assumption~\ref{ass:our-ass}, and let $p \in (m,\infty]$, $p\geq 2$. There exist constants $\ve_{\star}>0$, $\gamma_{\star} \in (0, \frac{1}{16}]$, and $\alpha_{\star} \in (0, \frac{1}{2}(1-\frac{m}{p}))$, depending on $\Psi,N,m,p$, and a non-decreasing modulus $\omega_{\star} : [0,\ve_{\star}] \to [0,\infty)$, with $\lim_{t \downarrow 0} \omega_{\star}(t) =0$, satisfying the following property.

Let $V$ be an integral $m$-varifold in an open neighborhood of $\bar{B}_{2r}(x_0)$,
with $x_0 \in \textup{spt}\,\|V\|$ and $\Psi$-mean curvature $H_{\Psi} \in L^p(B_{2r}(x_0),\|V\|)$.
Suppose that, for some $S_0 \in \bG(N,m)$,
\begin{equation}\label{eqn:main-thm-hypotheses}
    \begin{split}
    \delta(x_0,S_0,r) &:= \left| \frac{\|V\|(B_{2r}(x_0))} {\omega_m(2r)^m}-1 \right| + r^{1-\frac{m}{p}} \|H_{\Psi}\|_{L^p(B_{2r}(x_0), \|V\|)} + r^{-m} \int_{B_{2r}(x_0)} \| T_x V - S_0 \|^2 \, d \|V\|
    \end{split}
\end{equation}
satisfies $\delta(x_0,S_0,r) \leq \ve_{\star}$.
Then, after translating $x_0$ to $0$ and applying an orthogonal change of coordinates taking $S_0$ to $\bR^m \times \{ 0 \}$, there exists a $f \in C^{1,\alpha_{\star}}(B^m_{\gamma_{\star} r} ; \bR^{N-m})$ with $f(0)=0$ such that
\[
V \mres B_{\gamma_{\star} r}
=  v(\operatorname{graph}f,1)\mres B_{\gamma_{\star} r}.
\]
Moreover, $f$ satisfies the scale-invariant estimate
\[
r^{-1}\, \|f\|_{L^{\infty}(B^m_{\gamma_{\star} r})} + \|Df\|_{L^{\infty}(B^m_{\gamma_{\star} r})}
+ r^{\alpha_{\star}}\, [Df]_{C^{0,\alpha_{\star}}(B^m_{\gamma_{\star} r})}
\leq \omega_{\star}(\delta(x_0,S_0,r)).
\]
In particular, $\textup{spt}\,\|V\|$ is an embedded $C^{1,\alpha_{\star}}$ submanifold and $V$ has multiplicity one in a
neighborhood of every point $x$ where $\delta(x,S,r) \leq \ve_{\star}$ holds for some $S \in \bG(N,m)$ and some scale $r$.
\end{theorem}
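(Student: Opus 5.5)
We follow Allard's scheme. Monotonicity is replaced by the Michael--Simon inequality of Assumption~\ref{ass:our-ass}, and the Caccioppoli-type coercivity is supplied by QEC. By scaling and translation invariance of $\delta$ we take $r=1$, $x_0=0$, $S_0=\bR^m\times\{0\}$.

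\textbf{Step 1 (density bounds).} We first prove uniform two-sided density bounds at all points $y\in \operatorname{spt}\|V\|\cap B_{1}$ and all scales $\rho\le 1/2$. The lower bound $\|V\|(B_\rho(y))\ge c\rho^m$ comes from applying the Michael--Simon inequality to $\phi V$ for cutoffs $\phi$ supported in $B_\rho(y)$; integrality gives $\theta_0=1$. Since $\delta_\Psi(\phi V)=\phi\,\delta_\Psi V+$ a term bounded by $C|\nabla\phi|\,\|V\|$, Hölder with $p>m$ turns $\|H_\Psi\|_{L^p}$ into a small error. Choosing $\phi$ as cutoffs between $B_s$ and $B_{s+ds}$ yields the ODE inequality $f(s)^{(m-1)/m}\le C(f'(s)+\varepsilon f(s)^{1-1/p})$ for $f(s)=\|V\|(B_s(y))$. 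Integrating it gives $f(\rho)\ge c\rho^m$; here $y\in\operatorname{spt}\|V\|$ guarantees $f>0$.

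The upper bound and the near-one density ratio need more. I would derive an \emph{almost monotonicity} for the quantity $\rho^{-m}\|V\|(B_\rho)$ by testing the first variation with the radial field $X=\phi(|x-y|)(x-y)$. For $\Psi$ the error relative to the isotropic identity is controlled by $\int\|T_xV-S_0\|$, since $D\Psi$ at $S_0$ defines a constant-coefficient linear map that can be normalized to area by a linear change of coordinates. Combined with the tilt smallness, this shows the density ratio stays within $\omega(\varepsilon)$ of $1$ at points and scales where the tilt is small.

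\textbf{Step 2 (Lipschitz approximation and tilt propagation).} Using the density bounds, we build the standard Lipschitz approximation. Let $G$ be the set of points whose maximal tilt function $\sup_{\rho}\rho^{-m}\int_{B_\rho}\|T V-S_0\|^2$ is small. On $G$ the varifold is a single-sheeted Lipschitz graph, by density near one and integrality. The bad set has measure $\lesssim$ tilt excess by a Vitali covering that uses the lower density bound. Next we need the anisotropic tilt-excess (Caccioppoli) inequality
\[\int_{B_{\rho/2}}\|TV-S\|^2\le C\rho^{-2}\!\int_{B_\rho}\operatorname{dist}(x,S)^2+C\rho^{m+2(1-m/p)}\|H_\Psi\|_{L^p}^2.\]
We prove it by testing with $X=\phi^2\,\pi_{S^\perp}(x)$. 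QEC is exactly the coercivity that bounds $\langle D\Psi, \cdot\rangle$ terms from below by $c\|T-S\|^2$.

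\textbf{Step 3 (harmonic-type approximation and excess decay).} On the graph part, the Lipschitz function $u$ almost solves the linearized Euler--Lagrange system $\operatorname{div}(A\nabla u)=0$, with $A=D^2$ of the integrand at $S_0$. This system is Legendre--Hadamard (indeed strongly) elliptic by QEC, so it has constant-coefficient $C^{1}$ decay estimates. A compactness/contradiction argument, using the Michael--Simon-based compactness and the Caccioppoli inequality for strong convergence, yields the decay $E(\theta\rho)\le \theta^{2\alpha}E(\rho)+C\rho^{2(1-m/p)}\|H\|^2$ for the tilt excess $E$ relative to optimal planes. Iterating at every point of $\operatorname{spt}\|V\|\cap B_{\gamma_\star}$ gives Campanato estimates for $T_xV$. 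These make the good set all of $B_{\gamma_\star}$, so $V$ is the graph of $f$ with multiplicity one, $Df\in C^{0,\alpha_\star}$, and the estimates are bounded by $\omega_\star(\delta)$. The restriction $\alpha_\star<\tfrac12(1-m/p)$ comes from the $H$ term in the iteration.

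\textbf{Main obstacle.} The hard part is Step 1's upper/near-one density control without monotonicity. Excess decay needs the density ratio near $1$ at every small scale around every nearby point, not just at scale $1$. I would try to close this by bootstrapping: small tilt at scale $\rho$ gives almost monotonicity, hence density near one at scale $\rho$; the decay of Step 3 then gives small tilt at scale $\theta\rho$. These are run jointly in a single induction on scales.
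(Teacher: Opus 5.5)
\textbf{There is a genuine gap, and it is exactly the one you flag.} The bootstrap you propose does not close it.

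Freeze $B_\Psi$ at $S_0$, normalize linearly, and test with a radial field. With $I(\tau)=\int\phi(|x-y|/\tau)\,d\|V\|$, this gives only
$\frac{d}{d\tau}\bigl(\tau^{-m}I(\tau)\bigr)\geq -C\tau^{-m-1}\int_{B_\tau(y)}\|T_xV-S_0\|\,d\|V\|-(\text{curvature term})$.
Suppose the tilt relative to a fixed plane is merely small at every scale, which is all that membership in your good set $G$ provides. Then the available bound on this error is of order $\ve^{1/2}\tau^{-1}$ and integrates to $\ve^{1/2}\log(\rho/\sigma)$, so you get no density bound down to small scales.

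To make the errors summable you would have to re-freeze at the current best plane $S_k$ at every scale. That changes the normalizing linear map, so you work with varying ellipsoids, and you must feed in geometric tilt decay. Even then, almost-monotonicity only transports an \emph{upper} bound from large scales to small ones. The lower bound $\|V\|(B_\rho(y))\geq(1-o(1))\,\omega_m\rho^m$ is what you need for the lower half of the projected-mass window, which selects multiplicity one and rules out holes under the graph. It would have to come from $\Theta^m\geq1$, and that requires controlling the errors on all of $(0,\rho]$, i.e.\ the very decay you are trying to prove.

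Step~2 has the same circularity. The claim that ``on $G$ the varifold is a single-sheeted Lipschitz graph by density near one'' presupposes near-one density at every point of $G$ and every small scale. In the isotropic proof, monotonicity supplies exactly this.

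\textbf{How the paper avoids this.} It never needs density control at all scales.
\begin{enumerate}[(i)]
\item It uses Allard's Lipschitz approximation for parametric integrands (\S 2.6 of his 1986 paper). This needs only small height, tilt and first variation, plus the projected mass lying in $(\frac34,\frac54)\,\omega_m(2r)^m$ at the working scale (Proposition~\ref{prop:lipschitz-approximation}, with the bad set improved to $Cr^m(E+\Lambda^2)$).
\item The window is propagated from scale $r$ to scale $\theta r$, around every nearby point, by compactness (Lemma~\ref{lemma:persistence-of-mass-one}). This is where the Michael--Simon-based compactness you invoke in Step~3 is actually needed; the blow-up there is done on the Lipschitz approximations. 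De~Philippis--Pigati compactness under AC and Michael--Simon, together with Hausdorff convergence of supports, yields a $\Psi$-stationary integral limit with $T_xV\equiv S$. Testing with $e\varphi$, $e\in S$, shows the limit is a sum of integer-multiplicity planes parallel to $S$. The mass bound then leaves a single multiplicity-one plane through the centre in $B_{5\theta}$.
\item Near-unit mass is re-derived at each scale with error $\eta\ll\tau_0$ (Lemma~\ref{lemma:restart}), so nothing accumulates.
\end{enumerate}

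\textbf{Two smaller points.}
\begin{enumerate}[(i)]
\item Your Caccioppoli test field $\phi^2\pi_{S^\perp}(x)$ is the isotropic one. Since $\la B_\Psi(T),S^\perp\rg=\frac12\Psi(T)\|T-S\|^2-\la d\Psi(T),T^\perp ST\rg$ contains a first-order, sign-indefinite term, you must instead test with $\phi^2\cN(S)(x-a)$. For that field QEC gives $\la B_\Psi(T),\cN(S)\rg\geq\lambda_\Psi\|T-S\|^2$.
\item Campanato estimates at points of $\textup{spt}\,\|V\|$ only show that the support is a $C^{1,\alpha}$ graph over its projection. You still need an argument that the projection contains a ball. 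The paper's Step~4 does this using the small non-graphical set at a scale comparable to a putative hole.
\end{enumerate}
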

Since every anisotropy in a sufficiently small $C^2$ neighborhood of the area integrand satisfies USAC, hence QEC~\cites{derosa-tione-regularity,atomic-condition}, and since anisotropies
$C^1$-close to area support a Michael-Simon inequality~\cite{anisotropic-michael-simon}, Theorem~\ref{thm:allard} has the following
immediate consequence.
\begin{corollary}\label{cor:near-area}
For every $2\leq m<N$ there exists $\delta=\delta(N,m)>0$ such that the
conclusion of Theorem~\ref{thm:allard} holds for every positive anisotropy
$\Psi$ satisfying $\|\Psi-1\|_{C^2(\bG(N,m))}<\delta$.
\end{corollary}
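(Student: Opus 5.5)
The plan is to verify that every anisotropy $\Psi$ with $\|\Psi-1\|_{C^2(\bG(N,m))}<\delta$ satisfies both parts of Assumption~\ref{ass:our-ass}. Once that is done, Theorem~\ref{thm:allard} applies verbatim. The constants $\ve_\star,\gamma_\star,\alpha_\star,\omega_\star$ produced by the theorem may depend on $\Psi$; the corollary only asserts that the conclusion holds for each such $\Psi$, so we need no uniformity in $\Psi$ beyond the choice of $\delta(N,m)$.

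\textbf{QEC.} We use the result of~\cite{derosa-tione-regularity}: there is $\delta_1(N,m)>0$ such that every $\Psi$ with $\|\Psi-1\|_{C^2}<\delta_1$ satisfies USAC. The underlying reason is that the area integrand satisfies USAC with a quantitative constant, and the condition is stable under $C^2$ perturbations, because the relevant matrix $B_\Psi(x,S)$ and its scalar coercivity depend continuously on $\Psi$ in $C^2$. We then invoke~\cite{atomic-condition}, which shows USAC implies QEC. Since QEC is exactly the ellipticity hypothesis in Assumption~\ref{ass:our-ass}, this part is settled. $\Psi$ is also positive and $C^2$, as required.

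\textbf{Michael--Simon inequality.} We use~\cite{anisotropic-michael-simon}: there is $\delta_2(N,m)>0$ such that every $\Psi$ with $\|\Psi-1\|_{C^1}<\delta_2$ supports the inequality in Assumption~\ref{ass:our-ass}, with a constant $C(m,N,\Psi)$. The point to check, and the one mild obstacle, is that the cited result is stated for exactly the class of varifolds in the assumption: finite mass, finite $\Psi$-first variation, and density at least $\theta_0$ almost everywhere, with the stated dependence $\theta_0^{-1/m}$. If~\cite{anisotropic-michael-simon} is phrased via a spectral condition instead, we would verify that condition for $\Psi$ near $1$. The area integrand satisfies it with a strict margin, and the condition involves at most first derivatives of $\Psi$, so it persists under small $C^1$, hence $C^2$, perturbations. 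The restriction $2\le m$ should enter here, or in the USAC perturbation result, since the cited theorems are stated in that range.

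Setting $\delta=\min\{\delta_1,\delta_2\}$ completes the argument. Since the $C^2$ norm dominates the $C^1$ norm, both properties hold whenever $\|\Psi-1\|_{C^2}<\delta$, and Theorem~\ref{thm:allard} then gives the conclusion.
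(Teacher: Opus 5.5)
Your proposal is correct and is essentially the paper's own argument. $C^2$-closeness to the area integrand gives USAC by \cite{derosa-tione-regularity}, hence QEC by \cite{atomic-condition}, and $C^1$-closeness gives the Michael--Simon inequality by \cite{anisotropic-michael-simon}; so Assumption~\ref{ass:our-ass} holds and Theorem~\ref{thm:allard} applies with $\delta=\min\{\delta_1,\delta_2\}$. One detail is worth stating explicitly: the paper's version of QEC requires a Lipschitz exposing field, and for $\Psi\in C^2$ this is supplied by $\cN(T):=B_{\Psi^*}(T^\perp)/\|B_{\Psi^*}(T^\perp)\|$, which is well defined since $\|B_{\Psi^*}(T^\perp)\|\geq \Psi(T)\sqrt{N-m}>0$.
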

Thus, the classical Allard theorem is stable under genuinely anisotropic perturbations in every dimension and codimension.  
More generally, Theorem~\ref{thm:allard} applies to every QEC anisotropy for which a Michael-Simon inequality is available; the concave-barrier criterion of~\cite{anisotropic-michael-simon} therefore gives non-perturbative instances as well.
See Remark~\ref{rmk:on-allard} for further related observations and consequences. 

Our second main result concerns the scope of the atomic conditions.  For
$T\in\bG(N,m)$, let $\xi_T$ be a Euclidean unit simple $m$-vector spanning
$T$, and define, following \cite{derosa-tione-regularity}*{\S 6},
\[
    \Psi_q(T):=\|\xi_T\|_{\ell^q}
      =\left(\sum_{|I|=m}|(\xi_T)_I|^q\right)^{1/q}.
\]
\begin{theorem}\label{thm:ell-p-on-G(N,m)}
For $2-\frac{7}{10\min\{m,N-m\}}\leq q\leq2$, the $\ell^q$ anisotropy satisfies USAC on $\bG(N,m)$.
\end{theorem}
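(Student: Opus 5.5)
The plan is to verify USAC through its quantitative convexity formulation on the Plücker embedding, as in \cite{derosa-tione-regularity}*{\S 6}. Write $F_q(\xi)=\|\xi\|_{\ell^q}$ for $\xi\in\Lambda_m\bR^N$, so that $\Psi_q(T)=F_q(\xi_T)$. I will use the (sufficient) criterion from that section, which I am quoting from memory and will need to pin down: USAC holds if there is $c>0$ such that for all $S,T\in\bG(N,m)$, with the orientation of $\xi_T$ chosen so that $\langle\xi_T,\xi_S\rangle\ge 0$,
\[
F_q(\xi_T)-\langle \nabla F_q(\xi_S),\xi_T\rangle \;\ge\; c\,|\xi_T-\xi_S|^2 .
\]
The Euler identity $\langle\nabla F_q(\xi),\xi\rangle=F_q(\xi)$ lets me rewrite the left side as the Bregman-type remainder $F_q(\xi_T)-F_q(\xi_S)-\langle\nabla F_q(\xi_S),\xi_T-\xi_S\rangle$. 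At $q=2$ this remainder is $1-\langle\xi_S,\xi_T\rangle=\tfrac12|\xi_T-\xi_S|^2$, so the criterion holds with $c=\tfrac12$. The task is to show the inequality survives for $q$ in the stated range below $2$.

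Fix $S$ and set $a=\xi_S$, $b=\xi_T$, $\eta=b-a$. I will expand
\[
F_q(b)-\langle\nabla F_q(a),b\rangle = \sum_I\Big(|b_I|^q\Big)^{1/q}-\|a\|_q^{1-q}\sum_I |a_I|^{q-2}a_I b_I
\]
and compare it termwise with the $q=2$ identity. Concavity-type estimates for $t\mapsto |t|^{q}$ with $1<q\le 2$, together with the scalar inequality $|s|^q-|t|^q-q|t|^{q-2}t(s-t)\ge c_q(s-t)^2(|s|+|t|)^{q-2}$, give a positive quadratic main term of order $(q-1)|\eta|^2$. Two sources of error remain. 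The first is the mismatch between $\|\cdot\|_q$ and $\|\cdot\|_2$ on unit vectors, which is controlled by $\|a\|_q^q\le \#\{I:a_I\neq0\}^{1-q/2}$. The second is the component of $\eta$ that is not tangent to the Grassmannian cone.

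For the second error I use that $b$ is simple and unit. Hence $\eta$ splits into a tangential part of the form $\sum_i e_{1}\wedge\cdots\wedge v_i\wedge\cdots\wedge e_m$ with $v_i\perp S$, plus higher-order wedge terms. By principal angles there are at most $k=\min\{m,N-m\}$ nonzero angles $\theta_1,\dots,\theta_k$, so the higher-order terms are bounded by elementary symmetric functions of the $\sin\theta_i$. This is where the dependence on $\min\{m,N-m\}$ enters: the degree-$j$ part of $\eta$ has norm at most $\binom{k}{j}^{1/2}$ times products of sines. Summing against the factor $(2-q)$ coming from the $\ell^q$ deviation, the error is at most $(2-q)\,C\,k\,|\eta|^2$ with an explicit $C$. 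I then need $(2-q)Ck\le \tfrac12-\epsilon$, which gives a condition of the form $q\ge 2-\frac{7}{10k}$ once the constants are tracked.

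The main obstacle is that $\ell^q$ is not $O(N)$-invariant. I cannot rotate $S$ to $e_1\wedge\cdots\wedge e_m$ without changing $F_q$, so the principal-angle bounds must be applied to $\eta$ in the coordinates fixed by the basis $e_I$, uniformly over all $S$. I would first try to handle this by bounding the Hessian of $F_q$ at $a$, restricted to the tangent space of the Grassmannian at $a$, using only $|a|=1$ and the sparsity and size constraints on simple vectors, for instance $|a_I|\le 1$ and the Plücker relations. Large-angle pairs $S,T$ would be treated separately by compactness plus the strict inequality $F_q(b)>\langle\nabla F_q(a),b\rangle$ for $b\neq\pm a$, which follows from strict convexity of $\ell^q$ for $q>1$. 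The delicate part is getting the explicit constant $7/10$ in the small-angle regime.
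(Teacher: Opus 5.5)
The gap is your central reduction to the one-sided Bregman inequality $F_q(\xi_T)-\la\nabla F_q(\xi_S),\xi_T\rg\geq c|\xi_T-\xi_S|^2$. That inequality does not control the USAC pairing. The pairing is $\la B_{\Psi_q}(T),B_{\Psi_q^*}(S^\perp)\rg=\Psi_q(T)\Psi_q(S)\,K_F(T,S)$, with $K_F(T,S)=m-\textup{tr}(\Pi_T\Pi_S)$ and $(\Pi_T)_{ab}=\la\iota_{e_a}\nu_\xi,\iota_{e_b}\xi\rg$. It therefore involves the \emph{whole} normalized gradient $\nu_\xi=DF(\xi)/F(\xi)$, whereas your remainder sees only the single number $\la\nabla F_q(\xi_S),\xi_T\rg$.

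Expanding $K_F$ in principal-angle frames (Lemma~\ref{lemma:two-plane-estimate}) gives two kinds of terms:
\begin{itemize}
\item the term $1-\la\nu_\xi,\eta\rg\la\nu_\eta,\xi\rg$, which is of your type and is all there is when $m=1$;
\item the terms $\la r_\xi,g_\xi\rg$, $\la r_\eta,g_\eta\rg$ and $\sum_{i,j}\beta_i\beta_j\la r_\xi,\xi_{ij}\rg\la r_\eta,\eta_{ji}\rg$, where $r_\xi=\nu_\xi-\xi$.
\end{itemize}
The second group depends on the components of $r_\xi$ along the tangential directions $\xi_{ij}$ and along $\eta-\la\xi,\eta\rg\xi$, which your remainder does not see. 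Two consequences show this is a real gap.
\begin{itemize}
\item Your large-angle step (strict convexity of the Pl\"ucker norm gives strict Bregman positivity, hence a positive pairing) is false as a principle. The norm $\ell^q$ is strictly convex for every $q>4$, yet Remark~\ref{rmk:when-usac-fails} exhibits distinct planes with negative pairing.
\item For $1<q\leq2$, your inequality already holds for every such $q$ and all $N,m$ (with $c=c(N,m,q)$), by the classical $2$-uniform convexity of $\ell^q$. So it carries no information that could produce a threshold, and your error bound $(2-q)Ck|\eta|^2$ leading to $\frac{7}{10k}$ is not derived from anything.
\end{itemize}

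What is missing is quantitative smallness of the full deviations $r_\xi,r_\eta$. The paper needs $|r_\xi|+|r_\eta|\leq\frac{13}{16}(F(\xi)F(\eta))^{-q/2}$ and $|r_\xi|\,|r_\eta|\leq(\frac{13}{32})^2(F(\xi)F(\eta))^{-q/2}$. It also uses the convexity of $F^q-(q-1)|\cdot|^q$ to handle the $m=1$-type term (Corollary~\ref{cor:kernel}). Such bounds fail in the original coordinates uniformly in $N$; your count $\#\{I:a_I\neq0\}$ can be of order $\binom{N}{m}$.

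This is where the lack of $O(N)$-invariance is actually resolved, not through Hessian bounds and Pl\"ucker relations. The paper makes a linear change of variables $L=Q^{1/2}$ on $W=T_0+S_0$, which has dimension at most $2m$. Here $Q$ minimizes $\Phi(Q)=\frac{2}{q}\sum_i(a_i^tQa_i)^{q/2}-\log\det Q$. This change of variables does three things:
\begin{itemize}
\item it puts the Pl\"ucker norm in the balanced form $F(z)^q=\sum_IA_I|\la b_I,z\rg|^q$ with $\sum_IA_I\leq(2m)^m/m!$;
\item it leaves the kernel unchanged, because $\Pi$ transforms by conjugation;
\item it makes only the final constant depend on $N$, through the condition number of $L$.
\end{itemize}
The range $q\geq2-\frac{7}{10m}$ then comes from the moment estimate of Lemma~\ref{lemma:elementary-inequalities} (exponent $h_m=\frac{20m}{7}$). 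That estimate bounds the weighted variance of $|t_I|^{q-2}$ in Lemma~\ref{lemma:balanced-stress}. The dependence on $\min\{m,N-m\}$ comes from Hodge duality, not from counting principal angles. Without a substitute for these steps, your plan does not reach USAC.
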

In particular, $\Psi_q$ satisfies QEC and AC.
For $q\neq2$, the norm $\Psi_q$ is not induced by an inner product. 
We thus obtain explicit, non-ellipsoidal examples of USAC and AC integrands in every dimension and codimension, beyond the previously known perturbative description of the class.

Finally, in Section~\ref{sec:axisymmetric}, we construct families of anisotropic integrands based on an axisymmetric ansatz that satisfy USAC. 
Our main construction is the following.
\begin{theorem}\label{thm:axisymmetric-usac}
Let $2\leq m<N$ and fix a unit vector $e\in\bR^N$. 
Let $h\in C^2(\bR^2\setminus\{0\}, \bR_+)$ be an absolute norm, meaning $h(x,y)=h(|x|,|y|)$, $h$ is $1$-homogeneous, and $h$ is convex.
Define
\[
    \Psi_h(T):=h\bigl(|Te|,|T^\perp e|\bigr),
    \qquad T\in\bG(N,m).
\]
Set $H(\theta):=h(\cos(\theta),\sin(\theta) )$ for $0\leq\theta\leq\frac{\pi}{2}$. 
Then, $\Psi_h$ satisfies USAC if and only if
\[
H(\theta)+H''(\theta)>0 \quad\text{for every }\theta\in[0,\tfrac{\pi}{2}].
\]
Furthermore, if this condition holds, then $\Psi_h$ supports a Michael-Simon inequality.
\end{theorem}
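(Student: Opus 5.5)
We must prove Theorem~\ref{thm:axisymmetric-usac}. The plan is to reduce USAC for $\Psi_h$ to a one-variable ellipticity statement for $H$, and then obtain the Michael--Simon inequality from the concave-barrier/spectral criterion of~\cite{anisotropic-michael-simon}.

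\textbf{Reduction to the two-dimensional profile.} First I would rewrite USAC in the form of~\cite{derosa-tione-regularity}. There it is a uniform positive lower bound, over $T\in\bG(N,m)$, for the quadratic form of the scalar integrand
\[
F_T(X):=\Psi_h(\mathrm{span}(\text{graph of }X))\,J(X)
\]
on $\mathrm{Hom}(T,T^\perp)$, evaluated at $X=0$ and tested against rank-one matrices, with the appropriate uniform constant.

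Fix $T$ and decompose $e=\cos\theta\,\tau+\sin\theta\,\nu$ with unit vectors $\tau\in T$, $\nu\in T^\perp$. Then $|Te|=\cos\theta$, $|T^\perp e|=\sin\theta$, and $\Psi_h(T)=H(\theta)$.

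Next, for a perturbation $X\in\mathrm{Hom}(T,T^\perp)$, I would compute to second order the angle $\theta(X)$ between $e$ and the perturbed plane $T_X$. Writing $\theta(X)=\theta+\ell(X)+\tfrac12 q(X)+\dots$, one gets:
\begin{itemize}
\item $\ell(X)$ depends only on $\langle X\tau,\nu\rangle$ and on the $\tau$- and $\nu$-components of $X$;
\item the Jacobian contributes the isotropic term $\tfrac12|X|^2 H(\theta)$.
\end{itemize}
The resulting Hessian should split orthogonally into two pieces:
\begin{itemize}
\item the "$\tau\otimes\nu$" direction, which carries the coefficient $H(\theta)+H''(\theta)$, the usual one-dimensional ellipticity of the Wulff profile;
\item the remaining directions, with coefficients built from $H(\theta)$ and $H'(\theta)\cot\theta$ or $H'(\theta)\tan\theta$ type terms.
\end{itemize}
Absoluteness and convexity of $h$, together with $H'(0)=H'(\pi/2)=0$ from the symmetry $h(x,y)=h(|x|,|y|)$, should make those remaining coefficients automatically positive. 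The reason is that $H'\cot\theta+H$ is comparable to a value of the radial derivative of $h$, which is positive by convexity and $1$-homogeneity, i.e.\ $h(x,y)=x\,\partial_xh+y\,\partial_yh$.

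\textbf{Equivalence and uniformity.} Granting this block structure:
\begin{itemize}
\item Necessity: if $H+H''\le0$ at some $\theta_0$, test USAC with the rank-one direction $\tau\otimes\nu$ at a plane $T$ realizing $\theta_0$, which exists since $2\le m<N$. This violates the strict inequality.
\item Sufficiency: compactness of $[0,\pi/2]$ and continuity give a uniform positive lower bound. Since $H>0$, this gives USAC.
\end{itemize}
The endpoints $\theta=0,\pi/2$, where $\tau$ or $\nu$ is undefined, need a separate limiting argument. The expansion there is smooth thanks to the evenness of $H$ about $0$ and $\pi/2$.

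\textbf{Michael--Simon.} For the second claim I would verify the spectral/concave-barrier hypothesis of~\cite{anisotropic-michael-simon}. The natural candidate barrier is the dual norm $h^*$ applied to $(|\langle x,e\rangle|,|x-\langle x,e\rangle e|)$, an axisymmetric convex function, with the required matrix inequality again reducing, by the same $(\tau,\nu)$ splitting, to $H+H''>0$.

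\textbf{Main obstacle.} I expect the hardest step to be the second-order computation of $\theta(X)$ and the proof that the off-diagonal block is nonnegative uniformly, including near the degenerate angles. If a direct check fails, I would instead diagonalize using the $O(m)\times O(N-m)$ symmetry fixing $T$, $\tau$ and $\nu$ to force the block structure.
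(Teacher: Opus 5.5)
\textbf{The sufficiency direction has a genuine gap, and it comes from your reformulation of USAC.}

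In Definition~\ref{def:all-the-usac}, USAC is the two-plane inequality $\la B_{\Psi}(T),B_{\Psi^*}(S^\perp)\rg\geq c\,\|T-S\|^2$ for \emph{all} $S,T\in\bG(N,m)$. After normalizing by $\Psi_h(T)\Psi_h(S)$, this is equivalent to $m-\mathrm{tr}(\Pi_T\Pi_S)\geq c'\|T-S\|^2$. It is not a uniform Legendre--Hadamard bound for $D^2F_T(0)$ on rank-one matrices.

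Your compactness argument in $\theta$ produces a uniform lower bound on the infinitesimal quadratic form at $S=T$. That only gives the inequality for $\|S-T\|$ small, uniformly in $T$. To reach all pairs by compactness of $\bG(N,m)\times\bG(N,m)$, you must also prove that $m-\mathrm{tr}(\Pi_T\Pi_S)>0$ for every pair $T\neq S$ at finite distance. Nothing in your proposal addresses this, and local positivity of a second variation does not imply it.

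This global step is the core of the paper's proof. It writes $T=\bR t\oplus Z$ and $S=\bR u\oplus Y$, with angles $\theta,\varphi$ and unit vectors $r,r'\in e^\perp$, and computes $\mathrm{tr}(\Pi_T\Pi_S)$ explicitly. It then uses $\mathrm{tr}(\pi_Y\pi_Z)\leq m-1-\max\{|\pi_Zr'|^2,|\pi_Yr|^2\}$ and $|\pi_Zr'|^2,|\pi_Yr|^2\leq1-\la r,r'\rg^2$. This gives a bound that is a convex quadratic in $\la r,r'\rg$, hence maximal at $\la r,r'\rg=\pm1$. There it concludes with the supporting-line inequality $|\cos\varphi\,h_1(\theta)\pm\sin\varphi\,h_2(\theta)|\leq H(\varphi)$, which follows from convexity and $1$-homogeneity of $h$. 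Equality forces $T=S$ because $H+H''>0$ makes $\{h<1\}$ strictly convex.

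\textbf{Further points on the local analysis and on Michael--Simon.}

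Locally, the right object is the $\varepsilon^2$-coefficient of $m-\mathrm{tr}(\Pi_T\Pi_{\mathrm{graph}(\varepsilon X)})$ for every $X\in\mathrm{Hom}(T,T^\perp)$. This coefficient equals $D^2F_T(0)[X,X]/F_T(0)+\mathrm{tr}(X^tG_0X^tG_0)-\la G_0,X\rg^2$, with $G_0:=DF_T(0)/F_T(0)$.
\begin{enumerate}[(i)]
\item Here $G_0$ is a multiple of $\nu\otimes\tau$, so the two correction terms cancel and your Hessian is the right quantity. Your necessity argument along the angular rotation is then fine, as in the paper.
\item However, you need positivity on all of $\mathrm{Hom}(T,T^\perp)$, not only on rank-one matrices.
\item The coefficients $h_2(\theta)/(H\sin\theta)$ and $h_1(\theta)/(H\cos\theta)$ are not automatically positive; for example, $h_2(\theta)/\sin\theta\to H(0)+H''(0)$ as $\theta\downarrow0$. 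Their uniform positivity comes from $h_2(\theta)=\int_0^\theta\cos u\,(H+H'')\,du$ and $h_1(\theta)=\int_\theta^{\pi/2}\sin u\,(H+H'')\,du$, that is, from the hypothesis.
\end{enumerate}

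The Michael--Simon part is unsupported. The criterion of~\cite{anisotropic-michael-simon}, as used here, asks for a concave function of the projected stress tensors $E_P^tB_{\Psi}(T)E_P$ with uniformly positive average over some probability measure on planes. A convex function on $\bR^N$, such as your $h^*$-gauge, is not this kind of object.

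The paper takes $\lambda_m(A)=\min_{|v|=1}\la Av,v\rg$ with Haar measure on $\bG(e^\perp,m)$. This average is positive by~\eqref{eqn:horizontal-compression} when $e\notin T$, but it vanishes on $\{T:e\in T\}$. Near that set the paper glues in the pushforward of Haar measure under $U\mapsto\bR e\oplus U$. Your sketch does not detect this degeneracy, let alone repair it.
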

In Corollary~\ref{cor:ha-integrands}, we provide examples of such integrands whose distance to any ellipsoidal area integrand is lower bounded, and that satisfy both USAC and the Michael-Simon inequality. 
As such, they satisfy the hypotheses of Theorem~\ref{thm:allard} while not being perturbations of the area functional under any change of coordinates by a positive definite symmetric matrix.

\subsection{Ideas and strategy of the proofs}

The proof of Theorem~\ref{thm:allard} is
inspired by Allard's regularity scheme
\cites{allard-first-variation,allard-regularity} and by the anisotropic
excess-decay argument for Lipschitz graphs developed in~\cite{derosa-tione-regularity}; see also the discussion in
\cite{gmt-differential-inclusions}*{\S 8}.
To identify the importance of the quadratic exposed condition (QEC), we associate with $\Psi$ the non-parametric graph integrand
\[
F_\Psi(A) :=\Psi(\operatorname{graph}A) \sqrt{\det(I_m+A^tA)}.
\]
By~\cite{atomic-condition}, QEC provides a Caccioppoli inequality that bounds the tilt-excess by the height-excess and the $L^2$ curvature.
Moreover, QEC implies a quantitative quasiconvexity inequality for $F_\Psi$ and, in particular, a local uniform Legendre-Hadamard inequality for $D^2F_\Psi$. 
On the other hand, the Michael-Simon inequality and the smallness of the scale-invariant $L^p$ norm of $H_{\Psi}$ produces a uniform lower mass bound $\|V\|(B_s(x)) \geq cs^m$ at every point $x \in \textup{spt} \, \|V\|$ and at every admissible scale.
In Section~\ref{sec:L2-L-infty}, we use this property to convert the a priori $L^2$ height bound into an $L^{\infty}$ bound, replacing in arbitrary codimension the
codimension-one barrier used in~\cite{allard-regularity}*{\S\S 3.4-3.5}.

Once the support lies in a sufficiently thin cylinder, we apply Allard's Lipschitz approximation for general parametric integrands~\cite{allard-regularity}*{\S 2.6}.  
This later version of the approximation
theorem, unlike the affine approximation in~\cite{allard-first-variation}*{\S 6.2}, does not require an anisotropic monotonicity formula.
The resulting Lipschitz graph agrees with $V$ outside an exceptional set with mass bounded by $Cr^m(E+\Lambda^2)$, where $E$ is the normalized squared height-excess and $\Lambda$ is the scale-invariant $L^p$ curvature quantity.
In particular, the non-graphical part becomes negligible after normalizing by $E_j^{\frac{1}{2}}$ and performing a blowup as $E_j \downarrow 0$.
Thus, after suitable translation and rescaling, the Lipschitz approximations converge to a weak solution $v$ of a constant-coefficient system, which is strongly elliptic by virtue of the Legendre-Hadamard condition for $D^2 F_{\Psi}$.
The graph of the affine polynomial of $v$ then provides the affine plane to iterate the decay of the height-excess at the next scale.

In Section~\ref{section:initiate-the-procedure}, we carry out the iteration following~\cite{derosa-tione-regularity}.
A key technical step in the iteration is to guarantee that the varifold mass remains in a fixed multiplicity-one regime across scales, which we obtain in Lemma~\ref{lemma:persistence-of-mass-one} as an application of the Michael-Simon inequality and the compactness theory developed in~\cite{dephilippis-pigati}.
Unlike in~\cite{derosa-tione-regularity}, the varifold is not a priori graphical, so special care is required to ensure that the varifold agrees with a multiplicity-one graph over a uniform neighborhood of $x_0$; this argument is carried out in~\hyperref[step-4-in-Allard]{Step 4} of the proof of Theorem~\ref{thm:allard}.

We next turn to the examples. 
To prove the USAC property of $\ell^q$, we work directly in Pl\"ucker coordinates for Theorem~\ref{thm:ell-p-on-G(N,m)}.
For the $\ell^q$ norm on $\bigwedge^m\bR^N$, the normalized differential determines an oblique projection $\Pi_T$, and the normalized USAC pairing reduces to the scalar inequality 
\[
K_q (T,S) := m - \textup{tr}(\Pi_T \Pi_S) \geq c_{N,m,q} ( m - \textup{tr}(TS)), \qquad m - \textup{tr}(TS) = \tfrac{1}{2} \|T-S\|^2.
\]
In Proposition~\ref{prop:kernel}, we establish this inequality for $2 - \frac{7}{10 \min \{ m,N-m \}} \leq q \leq 2$.

Finally, in Section~\ref{sec:axisymmetric}, we exhibit new $C^2$ anisotropic integrands satisfying USAC as well as a Michael-Simon inequality.
The construction of Theorem~\ref{thm:axisymmetric-usac} is obtained via an axisymmetric ansatz, which reduces USAC to a second-order differential inequality.
In Corollary~\ref{cor:ha-integrands}, we provide explicit examples that remain a definite distance from every ellipsoidal integrand.
These provide the first non-perturbative examples of USAC $C^2$ anisotropies.

\smallskip \noindent \textbf{Acknowledgments.} 
We are thankful to Guido De Philippis, Alessandro Pigati, Simon Brendle, and Toby Colding for helpful conversations.

ADR was funded by the European Union: the European Research Council (ERC), through StG ``ANGEVA'', project number: 101076411. Views and opinions expressed are however those of the authors only and do not necessarily reflect those of the European Union or the European Research Council. Neither the European Union nor the granting authority can be held responsible for them.
BF recognizes support from a Simons Dissertation Fellowship and a MathWorks Fellowship.
RT is supported by a Simons Dissertation Fellowship, an A.G.~Leventis Foundation Scholarship, and an Onassis Foundation Scholarship.

\smallskip \noindent \textbf{AI usage statement.}
The authors were computationally assisted by Artificial Intelligence in the proof of Proposition~\ref{prop:kernel}, which is used for Theorem~\ref{thm:ell-p-on-G(N,m)}.
Specifically, we directed a Large Language Model to find a range of exponents $q \in [ q_{N,m}, 2]$ for which Proposition~\ref{prop:kernel}, and hence USAC, holds for the $\ell^q$ anisotropy on $\bG(N,m)$.
The result was obtained after several interactions, and we have reworked the argument and included several intermediate lemmas.

\section{Preliminaries}\label{sec:preliminaries}

We recall some important properties of anisotropic elliptic integrands, following~\cites{derosa-ghiraldin , derosa-tione-regularity }.
Let $\Psi: \bG(N,m) \to (0,+\infty)$ be $C^1$.
We denote by $A^t$ the transpose of an endomorphism $A$ and identify $\bG(N,m)$ with the set of rank-$m$ orthogonal projections in $\bR^N$, namely 
\[
\bG(N,m) = \{ T \in \bR^{N \times N} : T = T^t, \; T^2 = T, \; \on{tr} T = m \}.
\]
Given an $m$-plane $T \in \bG(N,m)$, we define a tensor field $B_{\Psi}(T)$ by
\begin{equation}\label{eqn:BPsi(T)}
B_{\Psi}(T) = \Psi(T) T + T^{\perp} \, d \Psi(T) \, T, \qquad \text{where } \; d \Psi(T) := D \Psi(T) + D \Psi(T)^t.
\end{equation}
We observe that $B_{\Psi}(T) T = B_{\Psi}(T)$ and $\sup_{T \in \bG(N,m)} \| B_{\Psi}(T) - B_{\tilde{\Psi}}(T) \| \leq C_N \, \| \Psi - \tilde{\Psi} \|_{C^1}$ for any $\Psi, \tilde{\Psi}$.
In particular, a uniform $C^1$ bound on $\Psi$ implies a uniform bound on $B_{\Psi}$.
Moreover, 
\begin{equation}\label{eqn:B-psi-identity}
    \la B_{\Psi}(T) , L \rg = \Psi(T) \la T, L \rg + \la D \Psi(T), T^{\perp} LT + (T^{\perp} LT)^t \rg, \qquad \forall \; L \in \bR^{N \times N},
\end{equation}
where $D \Psi(T)$ is the differential of $\Psi$, extended to a $C^1$ function in a neighborhood of $\bG(N,m) \subset \bR^{N \times N}$.
We also use the dual integrand $\Psi^*(P) := \Psi(P^{\perp})$ for $P \in \bG(N,N-m)$, given by
\[
B_{\Psi^*} (S^{\perp}) = \Psi(S) S^{\perp} - S \, d \Psi(S) \, S^{\perp}, \qquad B_{\Psi}(S)^t B_{\Psi^*}(S^{\perp}) = 0, \qquad \text{for } \; S \in \bG(N,m).
\]
We will also use repeatedly the fact that $\Psi \in C^2(\bG(N,m))$ implies that the association $T \mapsto B_{\Psi}(T)$ is Lipschitz.
We write $c_{\Psi}>0$ for a sufficiently small positive number and $C_{\Psi} < + \infty$ for a sufficiently large positive number, which change from one step to another and depend only on $N,\Psi$.

Let $V = v(M,\theta)$ be a rectifiable $m$-varifold in an open set $\Omega \subset \bR^N$, namely a Radon measure on $\Omega \times \bG(N,m)$.
We write $\|V\|$ for the weight measure, which satisfies $\|V\|(A) := V(A \times \bG(N,m))$ for every Borel set $A \subseteq \Omega$.
If $\Theta^m(\|V\|,x_0) \geq \theta_0$ a.e., then we can choose the rectifiable set $M$ to satisfy $\theta \geq \theta_0$ $\cH^m$-a.e., and hence $\cH^m(M) \leq \theta_0^{-1} \|V\|(\Omega)$.

The anisotropic first variation of $V$ with respect to the flow generated by a Lipschitz vector field $g \in \textup{Lip}_c(\Omega;\bR^N)$ is computed in~\cite{derosa-ghiraldin}*{Lemma A.2} as
\begin{equation}\label{eqn:first-variation}
[\delta_{\Psi} V] (g) = \int \la B_{\Psi}(T), D g(x) \rg \, dV(x,T) =
\int_{M} \la B_{\Psi} (T_x M), Dg(x) \rg \, \theta \, d \cH^m(x).
\end{equation}
Given an open set $U$, we say that $V = v(M,\theta)$ has $\Psi$-mean curvature in $L^p_{\textup{loc}}$ if there exists a map $H_{\Psi} \in L^p_{\textup{loc}}( M \cap U, \bR^N ; \theta\cH^m \mres M)$ with the property that
\[
[\delta_{\Psi} V](g) = \int_{\Omega \times \bG(N,m)} \la B_{\Psi}(T), Dg \rg \, dV(x,T) = - \int_{\Omega} \la H_{\Psi}(x), g(x) \rg \, d \|V\|(x)
\]
for $g \in \textup{Lip}_c(U,\bR^N)$.
We say that $V$ is $\Psi$-stationary if this expression is zero for all $g$.

\begin{definition}\label{def:all-the-usac}
    We say that $\Psi$ satisfies the scalar atomic condition SAC if $\la B_{\Psi}(T), B_{\Psi^*}(S^{\perp}) \rg > 0$ for every $T \neq S \in \bG(N,m)$, and the uniform scalar atomic condition USAC if
    \begin{equation}\label{eqn:usac}
        \la B_{\Psi}(T), B_{\Psi^*}(S^{\perp}) \rg \geq c_{\Psi} \, \|T - S\|^2, \qquad \text{for every } \; S,T \in \bG(N,m)
    \end{equation}
    for some $c_{\Psi}>0$.
    We say that $\Psi$ satisfies the quadratic exposed condition QEC
(see~\cite{atomic-condition}*{Definition 9.3}) if there exist a constant
$\lambda_{\Psi}>0$ and a Lipschitz map $\cN \in C^{0,1} (\bG(N,m) , \bR^{N \times N})$
with $\| \cN(T)\| = 1$ for every $T$, such that, for every $S,T \in \bG(N,m)$,
\[
\cN(T)\, B_{\Psi}(T)^t = B_{\Psi}(T)^t\, \cN(T) = 0, \qquad
\la B_{\Psi}(S), \cN(T) \rg \geq \lambda_{\Psi}\, \|S-T\|^2.
\]
Finally, $\Psi$ satisfies the atomic condition AC if every probability
measure $\lambda$ on $\bG(N,m)$ satisfies
$\textup{rank} \int_{\bG(N,m)} B_{\Psi}(T) \, d\lambda(T) \geq m$, with equality
if and only if $\lambda = \delta_{T_0}$ for some $T_0 \in \bG(N,m)$.
\end{definition}

We note that in the original definition of QEC in~\cite{atomic-condition}*{Definition 9.3} the map $\cN$ was only assumed continuous.
Following~\cite{atomic-condition}*{Remark 9.16}, we require $\cN \in C^{0,1}$.

\subsection{QEC and quasiconvexity}

By the results of De Rosa-Tione~\cite{derosa-tione-regularity}, every $\Psi$ in a $C^2$ neighborhood of the area integrand satisfies USAC; the class of USAC integrands is $C^2$-open; and USAC implies the atomic condition of~\cite{derosa-ghiraldin} as well as quasiconvexity.
We show here that QEC also implies the quantitative quasiconvexity of~\cite{derosa-tione-regularity}*{Proposition 3.13}; this property is also described in~\cite{atomic-condition}*{Remark 9.16}.
As remarked therein, this step allows us to apply the regularity theory of~\cite{derosa-tione-regularity} for merely QEC, rather than USAC, integrands.
To formulate this result, we first recall the passage from varifolds to the non-parametric setting as in~\cite{derosa-tione-regularity}*{\S 2.3} via the associated non-parametric graph integrand
\begin{equation}\label{graphical}
    F_{\Psi}(A) := \Psi ( \textup{graph} \, A) \sqrt{\det (I_m + A^t A)}, \qquad \cA(A) := \sqrt{\det(I_m + A^t A)}.
\end{equation}
\begin{proposition}\label{prop:quasiconvexity}
Let $\Psi \in C^2(\bG(N,m), (0,\infty))$ be a QEC anisotropy with associated map $\cN \in C^{0,1}$.
Then, there exists a constant $\alpha = \alpha(\Psi,\cN)>0$ such that
\begin{equation}\label{quasiconvex}
\int_{\Omega} ( F_{\Psi}(A+D \varphi)) - F_{\Psi}(A))\, dx \geq \alpha \int_{\Omega} ( \cA(A+D \varphi) - \cA(A)) \, dx
\end{equation}
for every bounded open set $\Omega \subset \bR^m$, every $A \in \bR^{(N-m) \times m}$, and every $\varphi \in C_c^1(\Omega, \bR^{N-m})$.

For every $R<+\infty$, there exists a $c_{\Psi,\cN,R}>0$ such that $D^2 F_{\Psi}(A)[L,L] \geq c_{\Psi,\cN,R} \|L\|^2$ for every rank-$1$ matrix $L$ and $\|A\| \leq R$.
Thus, $F_{\Psi}$ satisfies the local uniform Legendre-Hadamard condition.
\end{proposition}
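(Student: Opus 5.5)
The plan is to follow the argument of \cite{derosa-tione-regularity}*{Proposition 3.13}, with the USAC pairing $\la B_\Psi(S), B_{\Psi^*}(T^\perp)\rg$ replaced by the QEC pairing $\la B_\Psi(S),\cN(T)\rg$. Throughout, fix $A$ and set $T:=\textup{graph}\,A$, $S_x:=\textup{graph}(A+D\varphi(x))$ and $M(B):=\begin{pmatrix} I_m \\ B\end{pmatrix}$.

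\textbf{Step 1: a pointwise identity for $F_\Psi$.} We use the identity underlying the first variation formula~\eqref{eqn:first-variation} in graph coordinates. Writing $\Psi(S)=\la B_\Psi(S),S\rg$ and using $B_\Psi(S)S=B_\Psi(S)$, one obtains, for the $N\times m$ matrix $M(B)$ whose columns span $S=\textup{graph}\,B$,
\[
F_\Psi(B)=\cA(B)\,\Psi(S).
\]
Moreover $DF_\Psi(A)[L]=\cA(A)\la B_\Psi(T),\,M(A)^{+}\ldots\rg$, i.e. $DF_\Psi(A)$ is the linear functional $L\mapsto\la B_\Psi(T)\,M(A)\,\cdot\,\rg$ paired with $\begin{pmatrix}0\\ L\end{pmatrix}$ after the natural normalization. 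Since $\int_\Omega D\varphi=0$, we have $\int_\Omega(F_\Psi(A+D\varphi)-F_\Psi(A))=\int_\Omega\big(F_\Psi(A+D\varphi)-F_\Psi(A)-DF_\Psi(A)[D\varphi]\big)$. The same null-Lagrangian trick applies to any functional of the form $B\mapsto\la P, M(B)\rg$-type minors, because these are linear in $D\varphi$ and integrate to zero.

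\textbf{Step 2: the key pointwise inequality.} Define $G(B):=\cA(B)\la\cN(T),S_B\rg$, where $S_B$ is the projection onto $\textup{graph}\,B$. The first idea would be to show $F_\Psi(B)-F_\Psi(A)-DF_\Psi(A)[B-A]\ge \lambda\,\cA(B)\|S_B-T\|^2 - (\text{null Lagrangian in }B)$. The cleaner route is the following. Since $\cN(T)B_\Psi(T)^t=B_\Psi(T)^t\cN(T)=0$, the QEC inequality gives $\la B_\Psi(S),\cN(T)\rg\ge\lambda_\Psi\|S-T\|^2$, and $\la B_\Psi(T),\cN(T)\rg=0$. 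Integrating the QEC inequality along the graphs $S_x$ with weight $\cA(A+D\varphi)$, and using that $\cA(B)\,S_B\,M$ expands into minors of $M(B)$, which are null Lagrangians, one should obtain
\[
\int_\Omega\big(F_\Psi(A+D\varphi)-F_\Psi(A)\big)\ \ge\ c\int_\Omega\cA(A+D\varphi)\,\|S_x-T\|^2\,dx .
\]
This is exactly the mechanism of \cite{derosa-tione-regularity}, with $\cN(T)$ playing the role of $B_{\Psi^*}(T^\perp)$, and it is the main obstacle. The difficulty is that $\cA(B)S_B$ is not polynomial in $B$, only $\cA(B)^2 S_B$-type quantities are. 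I would therefore first try to use $\cA(B)\,\Lambda^m$-coordinates: $\cA(B)\xi_{S_B}=M(B)e_1\wedge\dots\wedge M(B)e_m$ is polynomial, so pairings of $\cA(B)$ with linear functionals of the $m$-vector are null Lagrangians. If $\cN(T)$ cannot be transported to a linear functional on $m$-vectors, I would instead compare with the Euclidean case: apply the same estimate to the area integrand to get $\int(\cA(A+D\varphi)-\cA(A))\le C\int\cA(A+D\varphi)\|S_x-T\|^2$. Here I would use that $\cA$ is polyconvex with the identity $\cA(B)-\cA(A)-D\cA(A)[B-A]\simeq\cA(B)\|S_B-T\|^2$ up to the angle normalization. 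Combining the two inequalities then gives~\eqref{quasiconvex} with $\alpha=c/C$.

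\textbf{Step 3: Legendre--Hadamard.} Apply~\eqref{quasiconvex} with $\varphi(x)=\varepsilon\,\eta(x)\,b\,h(a\cdot x/\varepsilon)$, a standard rank-one laminate test function with $L=b\otimes a$. Expanding both sides to second order and letting $\varepsilon\to0$ yields $D^2F_\Psi(A)[L,L]\ge\alpha\,D^2\cA(A)[L,L]$, where $\alpha$ is uniform in $A$. Finally, $D^2\cA(A)[L,L]\ge c_R\|L\|^2$ for rank-one $L$ and $\|A\|\le R$, by a direct computation, since the area integrand is uniformly Legendre--Hadamard on bounded sets. This gives $c_{\Psi,\cN,R}=\alpha c_R$.
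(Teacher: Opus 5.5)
\textbf{There is a genuine gap.} Step 2 is where the proposition actually lives, and it is not proved. QEC is a pointwise statement about the stress matrices $B_\Psi(S)$. By contrast, \eqref{quasiconvex} needs a supporting affine function for $F_\Psi$ in terms of the minors $Z(B)=\bigwedge_{i=1}^m(e_i+Be_i)$, which are the null Lagrangians whose integrals are frozen when $\varphi\in C^1_c$.

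No null-Lagrangian identity links the two. The weighted tensor $\cA(B)B_\Psi(\textup{graph}\,B)$ has $DF_\Psi(B)$ as its vertical--horizontal block; this is just the vertical first variation. Its horizontal block involves $F_\Psi(B)$ and $B^tDF_\Psi(B)$. Hence $\int_\Omega\cA(A+D\varphi)\la B_\Psi(S_x),\cN(T)\rg\,dx$ is not, in general, $\int_\Omega(F_\Psi(A+D\varphi)-F_\Psi(A))\,dx$ up to null Lagrangians. You flag this yourself, but ``one should obtain'' is not an argument, and your fallback (comparison with the area integrand) only addresses the other half of the estimate.

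That other half is also flawed as stated. The exact identity is $\int_\Omega(\cA(A+D\varphi)-\cA(A))\,dx=\frac12\int_\Omega\cA(A+D\varphi)|\xi_T-\xi_{S_x}|^2\,dx$, with \emph{oriented} unit $m$-vectors. The unoriented tilt $\|S_x-T\|^2$ does not control this uniformly in $A$. Take $m=N-m=1$, $A=a\gg1$, and $\varphi'=-2a$ on $(0,\frac12)$, $\varphi'=2a$ on $(\frac12,1)$, suitably smoothed. The left side is $\approx a$, while $\int_\Omega\cA(A+D\varphi)\|S_x-T\|^2\,dx=O(a^{-1})$: slopes $\pm a$ give nearly equal lines with nearly opposite graph orientations. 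So even granting Step 2, your route yields at best $\alpha=\alpha_R$ for $\|A\|\le R$. That suffices for the Legendre--Hadamard part but not for the uniform constant in \eqref{quasiconvex}. Note also that your sketch never uses that $\cN$ is Lipschitz, which is exactly where the paper needs it.

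\textbf{How the paper closes the gap.} It passes to the $m$-vector level with two external theorems.
\begin{enumerate}
\item By \cite{atomic-condition}*{Theorem 9.12}, the Lipschitz exposing field makes QEC stable under subtracting a small constant. So $\Psi_\alpha:=\Psi-\alpha$ is positive and QEC, hence AC.
\item By \cite{derosa-lei-young}*{Theorem 1.16}, AC yields a convex, positively one-homogeneous $G_\alpha$ on $\bigwedge^m\bR^N$ extending the even lift of $\Psi_\alpha$.
\end{enumerate}
The second item is precisely the ``transport to a linear functional on $m$-vectors'' you were hoping for. It is a deep result, not a formal consequence of the QEC inequality. Then $F_\Psi-\alpha\cA=G_\alpha\circ Z$, and Jensen's inequality together with $\int_\Omega Z(A+D\varphi)\,dx=|\Omega|\,Z(A)$ gives \eqref{quasiconvex} at once. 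The constant is uniform in $A$ and no tilt term is needed; convexity of $G_\alpha$ on all of $\bigwedge^m\bR^N$ handles the orientation issue above automatically.

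Your Step 3 (quasiconvexity, then rank-one convexity, then $D^2F_\Psi(A)[L,L]\ge\alpha D^2\cA(A)[L,L]\ge\alpha(1+R^2)^{-2}\|L\|^2$) is fine and matches the paper.

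Two small corrections. First, $\la B_\Psi(S),S\rg=m\Psi(S)$, not $\Psi(S)$. Second, the pointwise ``$\simeq$'' you propose for the area excess is false when $\min\{m,N-m\}\ge2$, since $\cA$ is not convex there; it holds only after integration.
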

\begin{proof}
By~\cite{atomic-condition}*{Theorem 9.12}, the Lipschitz exposing field allows us to choose $\alpha>0$ so that $\Psi_\alpha=\Psi-\alpha$ is positive and satisfies QEC, hence AC. 
The result of De Rosa, Lei, and Young~\cite{derosa-lei-young}*{Theorem 1.16} therefore gives a convex positively one-homogeneous function $G_\alpha$ on $\bigwedge^m\bR^N$ whose restriction to unit simple $m$-vectors is the even lift of $\Psi_\alpha$.
For $A\in\bR^{(N-m)\times m}$, we set $Z(A):=\bigwedge_{i=1}^m(e_i+Ae_i)$, so $G_\alpha(Z(A))=F_\Psi(A)-\alpha\cA(A)$.
The coordinates of $Z(A)$ are minors of the graph matrix; the null-Lagrangian identity, valid for $\varphi\in C_c^1(\Omega)$, gives
\[
\int_\Omega Z(A+D\varphi)\,dx=|\Omega|Z(A).
\]
For a general bounded open set $\Omega$, this identity follows by extending $\varphi$ by zero to a ball and subtracting the integral over its complement. 
Jensen's inequality now yields
\[
\int_\Omega G_\alpha(Z(A+D\varphi))\,dx \geq |\Omega|G_\alpha(Z(A)),
\]
which is precisely~\eqref{quasiconvex}.
Thus, $F_\Psi-\alpha\cA$ is quasiconvex and hence rank-one convex.
Since it is $C^2$, its Hessian is non-negative on rank-one matrices.
Differentiating the area integrand, we find
\[
D^2\cA(A)[u\otimes v,u\otimes v] =\cA(A)\, \langle(I_{N-m}+AA^t)^{-1}u,u\rangle\, \langle(I_m+A^tA)^{-1}v,v\rangle.
\]
In particular, for $\|A\|\leq R$ and $L$ any rank-$1$ matrix,
\[
D^2F_\Psi(A)[L,L] \geq\alpha D^2\cA(A)[L,L] \geq\alpha(1+R^2)^{-2}\|L\|^2.
\]
This proves the asserted local uniform Legendre-Hadamard inequality.
\end{proof}

Thanks to~\cite{atomic-condition}*{Proposition 9.15}, the Caccioppoli inequality of De Rosa-Tione \cite{derosa-tione-regularity}*{Proposition 4.3} extends to QEC integrands in the following form.
\begin{proposition}\label{prop:caccioppoli}
Let $\Psi \in C^2(\bG(N,m))$ be an integrand satisfying QEC and consider a rectifiable $m$-varifold $V$ in $B_{2r}(x_0)$ with $\Psi$-mean curvature $H_{\Psi} \in L^2( \|V\|)$.
Then, for every $S \in \bG(N,m)$ and every $a \in \bR^N$, it holds that
\[
c_{\Psi}\int_{B_r(x_0)} \|T_x V - S \|^2 \, d \|V\| \leq r^{-2} \int_{B_{2r}(x_0)} \textup{dist}(x,a+S)^2 \, d \|V\| + r^2 \int_{B_{2r}(x_0)} |H_{\Psi}|^2 \, d \|V\| 
\]
for $c_{\Psi}>0$ depending on $N,m$ and $\Psi$ and its QEC constants.
Moreover, for $M \in (0,\infty)$, there is a $c_{\Psi,M}>0$ such that if $H_{\Psi} \in L^p(\|V\|)$ for $p> m$ and $p \geq 2$ and $\|V\|(B_{2r}(x_0)) \leq Mr^m$, then 
\begin{align*}
& c_{\Psi,M} \, r^{-m} \int_{B_r(x_0)} \|T_x V - S\|^2 \, d\|V\| \\
&\quad\leq r^{-m-2} \int_{B_{2r}(x_0)} \textup{dist}(x,a+S)^2 \, d\|V\| + \Bigl( r^{1- \frac{m}{p}} \|H_{\Psi} \|_{L^p(B_{2r}(x_0), \|V\|)} \Bigr)^2 .
\end{align*}
\end{proposition}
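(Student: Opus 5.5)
The plan is to reprove the Caccioppoli inequality of De Rosa–Tione, using the QEC exposing field $\cN$ in place of $B_{\Psi^*}(S^\perp)$. We translate so that $a=0$ and fix a cutoff $\eta\in C^1_c(B_{2r}(x_0))$ with $\eta\equiv1$ on $B_r(x_0)$, $0\le\eta\le1$ and $|D\eta|\le 2/r$. The natural test field is
\[
g(x):=\eta(x)^2\,\cN(S)\,x .
\]
It is chosen so that its gradient contains the constant matrix $\cN(S)$, the component that QEC makes coercive, plus a cutoff error. Plugging $g$ into the first variation formula~\eqref{eqn:first-variation} gives
\[
\int \eta^2\la B_\Psi(T),\cN(S)\rg\,dV+\int\la B_\Psi(T),2\eta\,\cN(S)x\otimes D\eta\rg\,dV=-\int\la H_\Psi,\eta^2\cN(S)x\rangle\,d\|V\|.
\]
QEC bounds the first term from below by $\lambda_\Psi\int\eta^2\|T-S\|^2\,d\|V\|$.

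The key point is that the error terms contain $\cN(S)x$. Since $\cN(S)B_\Psi(S)^t=B_\Psi(S)^t\cN(S)=0$ and $B_\Psi(S)=B_\Psi(S)S$, I expect (as in~\cite{atomic-condition}*{Prop.~9.15}) that $\cN(S)$ annihilates $S$ in the appropriate sense, i.e.\ $\cN(S)S=0$. If only a weaker statement holds, I would replace $x$ by $S^\perp x$ in the definition of $g$; this replacement is exactly where the Lipschitz regularity of $\cN$ and the structure of QEC must be used. Granting it, $|\cN(S)x|\le|S^\perp x|=\textup{dist}(x,S)$.

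In the cutoff term, $B_\Psi(T)$ by itself does not vanish, so we write $B_\Psi(T)=B_\Psi(S)+(B_\Psi(T)-B_\Psi(S))$. The contribution of $B_\Psi(S)$ is
\[
\la B_\Psi(S),\cN(S)x\otimes D\eta\rg=\textup{tr}\bigl(B_\Psi(S)^t\cN(S)\,x\otimes D\eta\bigr)=0,
\]
by the orthogonality $B_\Psi(S)^t\cN(S)=0$. The remainder is controlled by the Lipschitz bound $\|B_\Psi(T)-B_\Psi(S)\|\le C\|T-S\|$, so the cutoff term is at most
\[
C\int\eta\,\|T-S\|\,|D\eta|\,\textup{dist}(x,S)\,d\|V\|.
\]
Young's inequality then bounds this by
\[
\tfrac{\lambda_\Psi}{4}\int\eta^2\|T-S\|^2+Cr^{-2}\int_{B_{2r}}\textup{dist}^2 .
\]
The curvature term is treated the same way: $|\la H_\Psi,\eta^2\cN(S)x\rg|\le \eta^2|H_\Psi|\,\textup{dist}(x,S)$, and Young's inequality gives $r^2\int|H_\Psi|^2+r^{-2}\int\textup{dist}^2$. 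Absorbing the $\|T-S\|^2$ term into the left-hand side yields the first inequality.

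For the second statement we divide by $r^m$ and apply Hölder's inequality:
\[
r^{2-m}\int_{B_{2r}}|H_\Psi|^2\,d\|V\|\le r^{2-m}\|H_\Psi\|_{L^p}^2\,\|V\|(B_{2r})^{1-2/p}\le M^{1-2/p}\bigl(r^{1-m/p}\|H_\Psi\|_{L^p}\bigr)^2 .
\]
Here we use $p\ge2$ and $\|V\|(B_{2r})\le Mr^m$. The resulting constant is $c_{\Psi,M}=c_\Psi\min\{1,M^{-(1-2/p)}\}$; this can be made independent of $p$ by taking $\max(1,M)$ in place of $M$.

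The main obstacle is the algebraic step: the orthogonality relations for $\cN(S)$ have to kill every zeroth-order contribution. Concretely, this means showing both $\cN(S)x=\cN(S)S^\perp x$ (or modifying $g$ accordingly) and $\la B_\Psi(S),\cN(S)\cdot\rg=0$ on the relevant rank-one matrices. I would check these first against the QEC identities, following~\cite{atomic-condition}*{Prop.~9.15}.
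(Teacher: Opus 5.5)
Your argument is correct and matches what the paper relies on. The paper gives no proof of its own: it cites \cite{atomic-condition}*{Proposition 9.15} and \cite{derosa-tione-regularity}*{Proposition 4.3}. That argument is exactly your test field $\eta^2\,\cN(S)(x-a)$, i.e.\ De Rosa--Tione's field with $B_{\Psi^*}(S^\perp)$ replaced by $\cN(S)$, and the $L^p$ form is the H\"older step \eqref{eqn:holder-bound-for-H-Psi}.

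The fact you flagged as the main obstacle and left as an expectation follows in one line from QEC. For $0\neq v\in S$ one has $B_\Psi(S)v=\Psi(S)v+S^\perp d\Psi(S)v\neq0$, since $\Psi(S)>0$. Hence $\ker B_\Psi(S)=S^\perp$ and $\operatorname{im}B_\Psi(S)^t=S$, so $\cN(S)B_\Psi(S)^t=0$ forces $\cN(S)S=0$. This gives $|\cN(S)(x-a)|\leq\textup{dist}(x,a+S)$.

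Your fallback of replacing $x$ by $S^\perp x$ is therefore unnecessary. It would not work on its own anyway, because QEC does not directly control $\la B_\Psi(T),\cN(S)S^\perp\rg$. The Lipschitz regularity of $\cN$ also plays no role here, since $\cN$ is only evaluated at the fixed plane $S$.
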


\subsection{An \texorpdfstring{$L^2$}{L2}-to-\texorpdfstring{$L^\infty$}{Linfinity} height bound}\label{sec:L2-L-infty}
A key step in Allard's regularity argument is an $L^2$-to-$L^{\infty}$ estimate for the height, cf.~\cite{allard-regularity}*{\S\S3.4-3.5}. 
The anisotropic Michael-Simon inequality~\cite{anisotropic-michael-simon} provides the required lower mass ratios in arbitrary codimension and yields the following simple estimate.
In what follows, unless stated otherwise, we will assume that the conditions of Assumption~\ref{ass:our-ass} hold for the anisotropy $\Psi$ and for every varifold $V$ under consideration.
We now define the key quantities used in the iteration scheme for the proof of Theorem~\ref{thm:allard}.
\begin{definition}\label{def:height-excess}
We consider an integral $m$-varifold $V$ defined in a neighborhood of $\bar{B}_{2r}(x_0)$.
Given an affine $m$-plane $A = a+S$, where $a \in \bR^N$ and $S \in \bG(N,m)$, we define
\begin{equation}\label{eqn:EVSx0r-LambdaVx0r}
    \begin{split}
    E(V,A,x_0,r) &:= r^{-m-2} \int_{B_r(x_0)} \textup{dist}(x,A)^2 \, d\|V\|(x) , \\
    \Lambda(V,x_0,r) &:= r^{1 - \frac{m}{p}} \| H_{\Psi} \|_{L^p (B_r(x_0), \|V\|)}.
    \end{split}
\end{equation}
\end{definition}
For an $m$-varifold $V$ satisfying a mass bound $\|V\|(B_r(x_0)) \leq M r^m$, H\"older's inequality yields
\begin{equation}\label{eqn:holder-bound-for-H-Psi}
r^{2-m} \int_{B_r} |H_{\Psi}|^2 \, d\|V\| \leq C(M) \, \Lambda(V,x_0,r)^2
\end{equation}
for $p \geq 2$.
Let us recall a density consequence of the functional Michael-Simon inequality.
\begin{lemma}\label{lemma:density-and-height-bound}
There are constants $\ve_0(N,m,p,\Psi), c_0(N,m, \Psi)>0$ such that the following properties hold for every integral $m$-varifold $V$ in an open neighborhood of $\bar{B}_{2r}(x_0)$ satisfying $\Lambda(V,x_0,2r) \leq \ve_0$.
\begin{enumerate}[(i)]
    \item For every $x \in \textup{spt} \, \|V\| \cap B_{2r}(x_0)$ and $0<s\leq \min \{ \frac{r}{2}, r- \frac{|x-x_0|}{2}\}$,
    we have $\|V\|(B_s(x)) \geq c_0 s^m$.
    \item For every
$\lambda \in (0,2)$, there is a constant $C_{\lambda} = C(N,m,\Psi,\lambda)< \infty$ with the following property.
    For every affine $m$-plane $A \subset \bR^N$ such that $E(V,A,x_0,2r) \leq \ve_0$, we have
    \[
    r^{-1} \sup_{x \in \textup{spt} \, \|V\| \cap B_{\lambda r}(x_0)} \textup{dist}(x,A)
    \leq C(N,m,\Psi,\lambda) \, E(V,A,x_0,2r)^{\frac{1}{m+2}}.
    \]
\end{enumerate}
\end{lemma}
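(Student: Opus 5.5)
Part (i) follows from the Michael--Simon inequality in Assumption~\ref{ass:our-ass} through the standard ODE argument for the mass function. Part (ii) then follows from (i) by Chebyshev.

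For (i), fix $x\in\textup{spt}\,\|V\|\cap B_{2r}(x_0)$ and set $\mu(s):=\|V\|(B_s(x))$ for $0<s\le s_0$. Since $V$ is integral, $\Theta^m(\|V\|,\cdot)\geq 1$ a.e., so we may take $\theta_0=1$. We apply the Michael--Simon inequality to the localized varifold $V\mres B_s(x)$. Because the inequality is stated for varifolds rather than functions, we first approximate: we use a cutoff $\phi_\eta(|y-x|)$, with $\phi_\eta=1$ on $[0,s-\eta]$ and linear down to $0$ on $[s-\eta,s]$. Passing to the functional form through the coarea/layer-cake decomposition (superlevel sets of $\phi$ are balls), or directly estimating $|\delta_\Psi(V\mres B_s)|$, we obtain for a.e.\ $s$
\[
\mu(s)^{\frac{m-1}{m}}\le C\Big(\mu'(s)+\int_{B_s(x)}|H_\Psi|\,d\|V\|\Big).
\]
Here the boundary term comes from $|\delta_\Psi V|$ of the truncation, which is bounded by $\sup\|B_\Psi\|\cdot\mu'(s)$.

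By H\"older, $\int_{B_s}|H_\Psi|\le \|H_\Psi\|_{L^p}\,\mu(s)^{1-1/p}$. We want to absorb this into the left side, so we compare $\mu^{1-1/p}$ with $\mu^{1-1/m}$. Since $p>m$, we have $\mu^{1-1/p}=\mu^{1-1/m}\mu^{1/m-1/p}$. We also need an upper mass bound $\mu(s)\le\|V\|(B_{2r}(x_0))$. If that is unavailable, we run the argument on the maximal initial interval where $\mu(s)\le (\omega_m/2)s^m$, say; outside that interval the lower bound holds trivially. On this interval
\[
\|H_\Psi\|_{L^p}\mu^{1/m-1/p}\le C\,\|H_\Psi\|_{L^p}\, s^{1-m/p}\le C\ve_0 .
\]
The constraint on $s$ ensures $B_s(x)\subset B_{2r}(x_0)$ and that the scale is comparable to $r$, so this term is small. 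Choosing $\ve_0$ small, it is absorbed: $\mu^{(m-1)/m}\le 2C\mu'$. Hence $(\mu^{1/m})'\ge c$. Since $x\in\textup{spt}\,\|V\|$, $\mu(t)>0$ for all $t>0$, and integrating from $0$ gives $\mu(s)\ge c_0 s^m$.

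The main obstacle is justifying the truncation step with the stated varifold form of Michael--Simon. One must check that $V\mres B_s(x)$ has finite first variation and that $|\delta_\Psi(V\mres B_s)|(\bR^N)\le \int_{B_s}|H_\Psi|\,d\|V\|+C\mu'(s)$ for a.e.\ $s$. This follows by testing~\eqref{eqn:first-variation} with $\phi_\eta g$ and letting $\eta\to0$ at Lebesgue points of $\mu$, using $|D\phi_\eta|\le \eta^{-1}$ and the bound on $B_\Psi$.

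For (ii), let $y\in\textup{spt}\,\|V\|\cap B_{\lambda r}(x_0)$ with $d:=\textup{dist}(y,A)$. Set $s:=\min\{d/2,\,c_\lambda r\}$ with $c_\lambda=\min\{\tfrac12,1-\tfrac\lambda2\}$, so (i) applies at $y$. On $B_s(y)$ we have $\textup{dist}(\cdot,A)\ge d/2$ and $B_s(y)\subset B_{2r}(x_0)$, so
\[
c_0 s^m (d/2)^2\le (2r)^{m+2}E(V,A,x_0,2r).
\]
If $s=d/2$, this gives $d^{m+2}\le C r^{m+2}E$, which is the claim. If instead $s=c_\lambda r<d/2$, it gives $c_\lambda^m r^{m+2}\le C E r^{m+2}$, which is impossible once $\ve_0$ is small enough depending on $\lambda$. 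Alternatively, the $\lambda$-dependence can be absorbed into $C_\lambda$, since the claimed bound is trivial when $d\lesssim r$ fails only if $E$ is bounded below. Thus $r^{-1}d\le C_\lambda E^{1/(m+2)}$.
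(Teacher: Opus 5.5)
Your route is the paper's. For (i) you feed the Michael--Simon inequality into the ODE for $\mu(s)=\|V\|(B_s(x))$. The paper simply cites De Philippis--Pigati, Cor.~1.12(i); your truncation $V\mres B_s(x)$, with boundary term $\sup\|B_\Psi\|\,\mu'(s)$, is a legitimate way to get the needed inequality directly from the varifold form of Assumption~\ref{ass:our-ass}. For (ii) you use the lower bound from (i) on a ball $B_s(y)$ where $\textup{dist}(\cdot,A)\ge d/2$. Two steps need repair, both easily.

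\textbf{Part (ii), second case.} When $s=c_\lambda r<d/2$, you conclude by making $\ve_0$ depend on $\lambda$. The statement requires $\ve_0=\ve_0(N,m,p,\Psi)$, with only $C_\lambda$ allowed to depend on $\lambda$. Your ``alternatively'' sentence does not actually supply an argument. The fix is not to throw away $d$: the inequality $c_0(c_\lambda r)^m(d/2)^2\le(2r)^{m+2}E$ directly gives $d/r\le C c_\lambda^{-m/2}E^{1/2}\le C_\lambda E^{1/(m+2)}$, because $E\le\ve_0\le 1$. This is exactly how the paper treats the case $d>\frac{2-\lambda}{2}r$. (Incidentally, discarding $d$ via $d/2>c_\lambda r$ produces $c_\lambda^{m+2}$, not $c_\lambda^{m}$.)

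\textbf{Part (i), the case split.} You claim that outside ``the maximal initial interval where $\mu(s)\le(\omega_m/2)s^m$'' the lower bound holds trivially. This is false as stated. That interval may be empty, e.g.\ near a density-one point where $\mu(s)\approx\omega_m s^m$. Moreover, after $\mu$ first exceeds $(\omega_m/2)s^m$, it can drop below again at larger scales, where nothing is trivial.

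What your estimates actually give is $(\mu^{1/m})'\ge c$ a.e.\ on the whole set $\{s:\mu(s)\le(\omega_m/2)s^m\}$, using $\mu>0$ since $x\in\textup{spt}\,\|V\|$. Close the argument as follows:
\begin{itemize}
\item Set $c_1:=\min\{(\omega_m/2)^{1/m},c\}$ and suppose $\mu(s)^{1/m}<c_1 s$ for some admissible $s$.
\item Let $a$ be the supremum of those $t<s$ with $\mu(t)^{1/m}\ge c_1t$, or $a=0$ if there are none.
\item On $(a,s)$ we have $\mu(t)\le(\omega_m/2)t^m$, so the differential inequality holds a.e.\ there.
\item By monotonicity of $\mu$, $\mu(a^+)^{1/m}\ge c_1a$.
\item Since $\mu^{1/m}$ is nondecreasing, $\mu(s)^{1/m}\ge\mu(a^+)^{1/m}+\int_a^s(\mu^{1/m})'\ge c_1a+c(s-a)\ge c_1 s$.
\end{itemize}
This contradicts the assumption and gives (i) with $c_0=c_1^m$.
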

\begin{proof}
The lower mass bound $(i)$ is a standard consequence of the functional Michael-Simon inequality, valid for every anisotropy satisfying Assumption~\ref{ass:our-ass}, together with Simon's
ODE lemma, applied to $M(s) := \|V\|(B_s(x))$; see~\cite{dephilippis-pigati}*{Corollary 1.12$(i)$}.

For $(ii)$, fix $\lambda \in (0,2)$ and $x \in \textup{spt} \, \|V\| \cap B_{\lambda r}(x_0)$, and set $d := \textup{dist}(x,A)$ and $s := \frac{1}{2} \min \{ d, \frac{2-\lambda}{2}r \}$.
Then, $\textup{dist}(y,A) \geq d- s \geq \frac{d}{2}$ on the ball $B_s(x) \subset B_{2r}(x_0)$, so the lower bound $(i)$ yields
\[
\int_{B_{2r}(x_0)} \textup{dist}(y,A)^2 \, d\|V\|  \geq \tfrac{1}{4} d^2 \|V\|(B_s(x)) \geq c'_0 d^2 s^m. 
\]
Let $E := E(V, A, x_0, 2r)$ for brevity, so this bound implies $E r^{m+2} \geq c''_0 d^2 s^m$.
If $d \leq \frac{2-\lambda}{2}r$, then $s = \frac{d}{2}$ and $\frac{d}{r} \leq C E^{\frac{1}{m+2}}$ proves the claim.
If $d>\frac{2-\lambda}{2}r$, then $s = \frac{2-\lambda}{4} r$ and $\frac{d}{r} \leq CE^{\frac{1}{2}} \leq C E^{\frac{1}{m+2}}$.
Combining these cases and taking the supremum over $x \in \textup{spt} \, \|V\| \cap B_{\lambda r}(x_0)$ proves our assertion.
\end{proof}

\subsection{Lipschitz approximation} 
We use Allard's Lipschitz approximation for general parametric integrands~\cite{allard-regularity}*{\S 2.6}.
For background on the isotropic Lipschitz approximation argument, see~\cite{simon-gmt}*{Chapter 5, \S 2}.
For varifolds with $\Psi$-mean curvature in $L^p$, we can replace Allard's bound \S 2.6(5) for the Lebesgue measure of the non-graphical set by an error term that has the same quadratic scaling as the height-excess.
Given an $m$-plane $S \in \bG(N,m)$, we denote by $\pi_S$ and $S^{\perp}$ the orthogonal projections onto $S$ and $S^{\perp}$, respectively.
\begin{equation}\label{eqn:Zr-domain}
Z_r(x_0,S) := \{ x \in \bR^N : |\pi_S (x-x_0)| < \tfrac{5}{2}r , \; |S^{\perp} (x - x_0)| < r \}, \qquad Z_r(x_0,S) \Subset B_{3r}(x_0).
\end{equation}
Throughout, we will assume the following bound on the projected mass of the $m$-varifold $V$:
\begin{equation}\label{eqn:projected-mass-bound}
    \tfrac{3}{4} \omega_m (2r)^m < \|V\| \bigl( Z_r \cap \pi_S^{-1}(B^S_{2r} (\pi_S x_0)) \bigr) = \| V \mres Z_r \| \bigl( \pi_S^{-1}(B^S_{2r}(\pi_S x_0)) \bigr) < \tfrac{5}{4} \omega_m(2r)^m
\end{equation}
where $B^S_{2r}(\pi_S x_0) = \{ z \in S : |z - \pi_S x_0| < 2r \}$ is a $2r$-ball on the $m$-dimensional plane $S$.
After rotating by an orthogonal map $R$ and translating, we can make $S = \bR^m \times \{ 0 \}$ and $x_0 = 0$; we then write $B^m_{2r} := B^S_{2r}(\pi_S x_0)$ for brevity.
Whenever making such an orthogonal change of coordinates, we also replace the varifold $V$, the anisotropy $\Psi$, and its stress tensor $B_{\Psi}$ by the rotated varifold $V$ and the pullback $\Psi_R$ with stress tensor $B_{\Psi_R}$, where
\begin{equation}\label{eqn:replace-Psi-R}
\Psi_R(T) := \Psi(R^t TR), \qquad B_{\Psi_R}(T) := R \, B_{\Psi}(R^t TR) \, R^t, \qquad \tilde{V} := R_{\#} V.
\end{equation}
We observe that $H_{\Psi_R}(Rx) = R \, H_{\Psi}(x)$, so all the relevant $L^p$ norms are preserved.

\begin{proposition}\label{prop:lipschitz-approximation}
Fix $\sigma \in (0,1)$, $L>0$, and $M<+\infty$.
There exists an $\ve_0(\Psi,m,N,\sigma,L,M)>0$ with the following property.
After a rotation and a translation, write $\bR^N = \bR^m \times \bR^{N-m}$, $S = \bR^m \times \{ 0 \}$, $x_0=0$, and $Z_r := Z_r(x_0,S)$.
Let $V$ be an integral $m$-varifold in an open neighborhood of $B_{3r}$ with $\Psi$-mean curvature $H_{\Psi} \in L^2( B_{3r} , \|V\|)$ and $\|V\|(B_{3r}) \leq M r^m$.
Suppose that~\eqref{eqn:projected-mass-bound} holds, that $\sup_{ \textup{spt} \, \|V\| \cap B_{11 r/4}(x_0) } |S^{\perp}x| \leq \ve_0 r$, and
\begin{align*}
    & r^{-m} \int_{Z_r \times \bG(N,m)} \| T - S\|^2 \, dV(x,T) + r^{2-m} \int_{Z_r} |H_{\Psi}|^2 \, d\|V\| \leq \ve_0
\end{align*}
where $Z_r$ is defined in~\eqref{eqn:Zr-domain}.
Then, there exists a Borel set $K \subset B^m_{\sigma r}$ and an $L$-Lipschitz map $f: B^m_{\sigma r} \to \bR^{N-m}$ such that $V$ agrees in $B_{2r}$ with the multiplicity-one graph of $f$ over $K$, namely
\[
V \mres ( B_{2r} \cap \pi_S^{-1}(K)) = v ( \{ (z,f(z)) : z \in K \}, 1 ),
\]
and such that
\begin{align*}
    & \int_{B^m_{\sigma r}} |Df|^2 + \cL^m ( B^m_{\sigma r} \setminus K) + \|V\| \left( \bigl(B_{2r}\cap\pi_S^{-1}(B_{\sigma r}^m)\bigr) \setminus \{(z,f(z)):z\in K\} \right) \\ &\qquad \leq C \, \Bigl[ \int_{Z_r \times \bG(N,m)} \|T-S\|^2 \, d V(x,T) + r^2 \int_{Z_r} |H_{\Psi}|^2 \, d \|V\| \Bigr].
\end{align*}
\end{proposition}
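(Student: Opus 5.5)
The plan is to follow Allard's argument in \cite{allard-regularity}*{\S 2.6}, which is written for general parametric integrands and does not rely on monotonicity. The only change is the estimate of the bad set: Allard bounds it using a sup bound on the first variation, whereas here we use the $L^2$ mean curvature together with the tilt-excess.

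First I would introduce the maximal-function type quantity. For $z \in B^m_{\sigma r}$ set
\[
\mathcal{M}(z) := \sup_{0<\rho\leq (1-\sigma) r}\rho^{-m}\Bigl[\int_{Z_\rho(z)\times\bG}\|T-S\|^2\,dV+\rho^2\int_{Z_\rho(z)}|H_\Psi|^2\,d\|V\|\Bigr],
\]
where $Z_\rho(z)$ denotes the cylinder over $B^m_\rho(z)$ intersected with the slab $\{|S^\perp x|<r\}$. Fix a small threshold $\eta=\eta(L)$ and define the good set $K:=\{z:\mathcal{M}(z)\leq\eta\}$. A Vitali (Besicovitch) covering argument on $S\cong\bR^m$ then gives
\[
\cL^m(B^m_{\sigma r}\setminus K)\leq C\eta^{-1}\Bigl[\int_{Z_r}\|T-S\|^2\,dV+r^2\int_{Z_r}|H_\Psi|^2\,d\|V\|\Bigr].
\]
The same covering bounds the $\|V\|$-mass over the bad balls by the same quantity. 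For this I would combine two facts: the upper mass bound $\|V\|(B_{3r})\leq Mr^m$, and the observation that on each bad ball of radius $\rho$ the mass $\|V\|(Z_\rho(z))$ is at most $C\rho^m$. The latter holds because the excess condition fails at scale $\rho$ but held at the previous dyadic scale, or because $\rho$ is comparable to $r$. Either way, this mass is controlled by $\eta^{-1}$ times the excess integrated over $Z_\rho(z)$.

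The core step is to show that for $z\in K$ the fibre $\pi_S^{-1}(z)\cap\textup{spt}\|V\|\cap B_{2r}$ consists of exactly one point $(z,f(z))$ with density one, and that $f$ is $L$-Lipschitz on $K$. I would argue this by compactness and contradiction at every scale $\rho\leq(1-\sigma)r$, using the compactness theorem of \cite{dephilippis-pigati} for integral varifolds with $\Psi$-mean curvature bounded in $L^2$ (they are AC by QEC) together with the uniform lower density bound from Lemma~\ref{lemma:density-and-height-bound}(i). Suppose a sequence has small tilt and mean-curvature excess in $Z_\rho(z)$ and height at most $\ve_0 r$. Then it converges to an integer multiple of $S$ restricted to the cylinder. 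The projected mass bound~\eqref{eqn:projected-mass-bound}, together with the constancy theorem, forces the multiplicity to be $1$.

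From this I would derive the conclusions for $z\in K$. The lower density bound prevents two support points in the fibre at separation greater than $\eta'\rho$. Hence any two points over $z,z'\in K$ with $|z-z'|=\rho$ have vertical separation at most $L\rho$. This gives the Lipschitz bound, and $f$ is then extended to $B^m_{\sigma r}$ by Kirszbraun with the same constant (up to $\sqrt{N-m}$, absorbed by taking $\eta$ smaller). Multiplicity one at $\cL^m$-a.e. point of $K$ follows from the area formula and the density being the limit multiplicity. The Dirichlet bound $\int|Df|^2$ splits into two parts: on $K$ it is controlled by the tilt via $|Df|^2\leq C\|T-S\|^2$, and on the complement by $L^2\cL^m(B^m_{\sigma r}\setminus K)$.

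The main obstacle is the ``one sheet'' step. It requires uniformity of the contradiction argument across all scales, and it requires that the projected mass bound~\eqref{eqn:projected-mass-bound}, which is stated only at scale $r$, propagates down to small cylinders. I would first try to obtain this propagation by running the compactness argument at the top scale and then inducting downward dyadically, using that the excess stays at most $\eta$ at every scale for $z\in K$. If that fails, the fallback is Allard's original ``multiplicity via first variation of $\Psi$ tested against vertical-cutoff fields'' computation \cite{allard-regularity}*{\S 2.6}, with $L^\infty$ first-variation bounds replaced by Cauchy--Schwarz against $\int|H_\Psi|^2$.
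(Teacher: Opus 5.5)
Your outline (an $L^2$ maximal function, a good set $K$, a Besicovitch covering of its complement) is reasonable. However, the step on which all of it rests is not established: the local bound $\|V\|(Z_\rho(z))\le C\rho^m$ at a bad radius, and more generally the persistence of projected density close to $1$ at every scale over points of $K$. Your justification, that the excess condition held at the previous dyadic scale, does not give this. The excess conditions at scales above $\rho$ put no constraint on $\rho^{-m}\|V\|(Z_\rho(z))$: a stack of $k$ parallel copies of $S$ has vanishing tilt and $H_\Psi$ at every scale, for every $k$. Only the top-scale bound~\eqref{eqn:projected-mass-bound} rules such stacks out, so it must be transported down to small scales.

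The dyadic compactness induction you propose does not do this. At the top scale your contradiction argument is fine, because the slab containing $\textup{spt}\,\|V\|$ is thin relative to the cylinder. But for $\rho\ll\ve_0 r$ the slab rescales to height $\ve_0 r/\rho\to\infty$. Varifold convergence of the rescalings then only sees the sheets near a chosen base point, and sheets at other heights over the same disc escape. The limit therefore no longer determines the projected mass ratio at the next scale, and the integer cannot be propagated.

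There is a second problem. Your one-sheet and Lipschitz steps use Lemma~\ref{lemma:density-and-height-bound}$(i)$ to locate support points in the limit. That lemma requires $\Lambda$ to be small in $L^p$ with $p>m$, whereas the proposition assumes only $r^{2-m}\int_{Z_r}|H_\Psi|^2\,d\|V\|\le\ve_0$. For $m\ge3$ this does not control $\rho^{2-m}\int_{B_\rho}|H_\Psi|^2\,d\|V\|$ at small $\rho$, so no lower density bound is available under the stated hypotheses.

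The paper avoids both issues by not rebuilding the construction. It uses a vertical cutoff only to show that $W=V\mres Z_r$ satisfies $|\delta_\Psi W|=|H_\Psi|\,\|W\|$ in the cylinder $\cU_r$, so horizontally supported test fields are admissible as in \cite{allard-regularity}*{\S 2.2}. It then checks Allard's hypotheses, namely smallness of $r^{-m}\int\|T-S\|\,dW$ and of $r^{1-m}|\delta_\Psi W|(Z_r)$, from the $L^2$ quantities by Cauchy--Schwarz and the mass bound; the projected mass bound fixes the integer to be $1$. With this, it applies \cite{allard-regularity}*{\S 2.6} as it stands.

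Allard's argument works with the projected mass over discs, which sees all sheets over a disc at once. No monotonicity, compactness, or lower density bound enters the paper's proof. That argument delivers the multiplicity-one graph over $K_0$ together with the local integrality estimate $\|W\|(Q_z)+\cL^m(B_{\rho(z)}(z))\le C\rho(z)^m$ at the critical radius, which is exactly the ingredient you are missing.

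Only the final covering is redone. At $\rho(z)$ either the $L^1$ tilt condition or the first-variation condition fails. Cauchy--Schwarz against the mass bound then gives $\rho(z)^m\le C\int_{Q_z}\|T-S\|^2\,dW$ or $\rho(z)^m\le Cr^2\int_{Q_z}|H_\Psi|^2\,d\|W\|$, and Besicovitch yields the quadratic estimate.
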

\begin{proof}
We retain Allard's argument in~\cite{allard-regularity}*{\S 2.6}, modifying only its final covering argument to exploit the assumption that $\delta_{\Psi} V$ is absolutely continuous with $L^2$ density.
We regard $W := V \mres Z_r$ as a varifold in the cylinder $\cU_r := \{ x \in \bR^N : |\pi_S x| < \frac{5}{2} r\}$, extending by zero outside $Z_r$, where
\begin{equation}\label{eqn:Zr-containment}
Z_r := \{ x \in \bR^N : |\pi_S x| < \tfrac{5}{2} r, \; |S^{\perp}x| < r \}, \qquad
\bar{Z}_r \subset \bar{B}_{\sqrt{29} r/2} \Subset B_{11r/4}  \Subset B_{3r}.
\end{equation}
The assumption $\sup_{\textup{spt} \, \|V\| \cap B_{11r/4}} |S^{\perp} x| \leq \ve_0 r$ implies that $\textup{spt} \, \|W\| \subset \{ |S^{\perp} x| \leq \ve_0 r\}$ in $\cU_r$.
We can choose $\chi \in C_c^{\infty}(S^{\perp};[0,1])$ with $\chi=1$ on $B^{S^{\perp}}_{r/2}$ and $\chi=0$ outside $B^{S^{\perp}}_{3r/4}$.
For any $g \in C^1_c(\cU_r; \bR^N)$, the field $\tilde{g}(x) := \chi (S^{\perp} x) g(x)$ extends by zero to a compactly supported $C^1$ vector field in $B_{3r}$.
On the portion of $\textup{spt} \, \|V\|$ where $\tilde{g}$ or its derivative can contribute, the height hypothesis implies that
\[
\chi(S^{\perp} x) = 1, \qquad D(\chi \circ S^{\perp})(x) =0,
\]
because $\{ 0 < \chi < 1 \} \cap \cU_r$, lies in $Z_r \subset B_{11r/4}$ and avoids $\textup{spt} \, \|V\|$.
Therefore,
\begin{align*}
    [\delta_{\Psi} W](g) &= \int \la B_{\Psi}(T), Dg(x) \rg \, dW(x,T) = \int \la B_{\Psi}(T), D \tilde{g}(x) \rg \, d V(x,T) \\
    &= - \int \la H_{\Psi}, \tilde{g} \rg \, d \|V\| = - \int \la H_{\Psi}, g \rg \, d \|W\|, \qquad \text{for every } \; g \in \textup{Lip}_c(\cU_r ; \bR^N).
\end{align*}
Thus, as in~\cite{allard-regularity}*{\S 2.2}, projected test fields $g$ with compact horizontal support are admissible, the cutoff produces no additional first variation term, hence
$|\delta_{\Psi} W| = |H_{\Psi}| \, \|W\|$ as measures in $\cU_r$.
We will apply Allard's construction to $W$; we first check that the hypotheses entering the construction are satisfied after decreasing $\ve_0$.
Since $\|W\|(Z_r) \leq M r^m$, the Cauchy-Schwarz inequality gives
$$
r^{-m} \int_{Z_r \times \bG(N,m)} \|T-S\| \, dW \leq M^{\frac{1}{2}} \Bigl( r^{-m} \int_{Z_r \times \bG(N,m)} \|T-S\|^2 \, d V \Bigr)^{\frac{1}{2}},
$$
and
\begin{align*}
r^{1-m} |\delta_{\Psi} W|(Z_r) &= r^{1-m} \int_{Z_r} |H_{\Psi}| \, d \|W\| \leq r^{1-m} \, \|W\|(Z_r)^{\frac{1}{2}} \Bigl( \int_{Z_r} |H_{\Psi}|^2 \, d\|W\| \Bigr)^{\frac{1}{2}} \\
&\leq M^{\frac{1}{2}} \Bigl( r^{2-m} \int_{Z_r} |H_{\Psi}|^2 \, d \|V\| \Bigr)^{\frac{1}{2}}.
\end{align*}
Thus, both the normalized tilt and the normalized first variation appearing in Allard's hypotheses can be made arbitrarily small by choosing $\ve_0<1$ small.
By~\eqref{eqn:projected-mass-bound}, we have $\frac{\|W\|(\pi_S^{-1}(B^m_{2r}))}{\omega_m(2r)^m} \in ( \frac{3}{4}, \frac{5}{4})$.
Therefore, the integer selected in Allard's construction is $1$, and we can apply the scale-invariant autonomous version of~\cite{allard-regularity}*{\S 2.6} to obtain a Borel set $K_0 \subset B^m_{2r}$ and an $L$-Lipschitz map $f_0: B^m_{2r} \to \bR^{N-m}$ such that $W$ agrees over $K_0$ with the multiplicity-one graph of $f_0$.
In particular, $\cL^m$-a.e. $z \in K_0$ has $T_{(z,f_0(z))}W = T_{(z,f_0(z))} \textup{graph}(f_0)$.
The Lipschitz estimate follows from~\cite{allard-regularity}*{\S 2.6} by considering the graph over $K_0$ and using Kirszbraun's extension theorem~\cite{maggi}*{Theorem 7.2}.

It remains to improve the estimate for the complement of $K_0$.
As above, we use $W = V \mres Z_r$ in what follows.
For every $z \in B^m_{2r} \setminus K_0$, Allard's construction provides a critical radius $\rho(z) \leq Cr$ so that either the tilt-excess bound fails, or the first-variation bound fails; these are alternatives (19) and (20) in Allard's notation.
At the same radius,~\cite{allard-regularity} produces a local mass upper bound.
Concretely, up to fixed constants, one of the following alternatives holds on $Q_z := \pi_S^{-1} (\bar{B}^S_{\rho(z)}(z)) \cap \cU_r$:
\[
(i):\quad \gamma\rho(z)^m \leq \int_{Q_z\times\bG(N,m)}\|T-S\|\,dW(x,T),
 \qquad\text{or}\qquad (ii):\quad \gamma\rho(z)^m \leq\rho(z)|\delta_\Psi W|(Q_z),
\]
where $\gamma>0$ is a fixed constant depending only on the parameters chosen in the Lipschitz approximation theorem.
At the same radius, Allard's local integrality estimate (21) gives
\begin{equation}\label{eqn:allard-mass-bound}
\|W\|(Q_z) + \cL^m ( B_{\rho(z)}(z)) \leq C \rho(z)^m.
\end{equation}
In the first case $(i)$, the Cauchy-Schwarz inequality and the bound~\eqref{eqn:allard-mass-bound} give
\begin{align*}
    \gamma^2 \rho(z)^{2m} &\leq \|W\| (Q_z) \int_{Q_z} \|T-S\|^2 \, d W \leq C \rho(z)^m \int_{Q_z} \|T-S\|^2 \, dW.
\end{align*}
Rearranging terms, we deduce that
    \begin{equation}
        \rho(z)^m \leq C' \int_{Q_z} \|T-S\|^2 \, dW. \label{eqn:case-I-bound}
    \end{equation}
In the second case $(ii)$, we combine the equality $|\delta_{\Psi}W| = |H_{\Psi}| \, \|W\|$ with the Cauchy-Schwarz inequality and the local mass bound~\eqref{eqn:allard-mass-bound} to obtain
\begin{align*}
    \gamma^2 \rho(z)^{2m} &\leq \rho(z)^2 \Bigl( \int_{Q_z} |H_{\Psi}| \, d \|W\| \Bigr)^2 \leq \rho(z)^2 \|W\| (Q_z) \int_{Q_z} |H_{\Psi}|^2 \, d \|W\| \\
    &\leq C \rho(z)^{m+2} \int_{Q_z} |H_{\Psi}|^2 \, d \|W\|.
\end{align*}
Combining this inequality with the bound $\rho(z) \leq Cr$, we arrive at
\begin{equation}\label{eqn:rhoMHpsiBound}
\rho(z)^m \leq C' r^2 \int_{Q_z} |H_{\Psi}|^2 \, d\|W\|.
\end{equation}
We may therefore combine the bounds~\eqref{eqn:allard-mass-bound}, \eqref{eqn:case-I-bound}, and~\eqref{eqn:rhoMHpsiBound} in all cases to estimate
\begin{align*}
\bigl( (\pi_S)_{\#} \|W\| + \cL^m \bigr) ( \bar{B}_{\rho(z)}(z)) \leq C \, \Bigl[ \int_{Q_z} \|T-S\|^2 \, d \|W\| + r^2 \int_{Q_z} |H_{\Psi}|^2 \, d \|W\| \Bigr].
\end{align*}
As in~\cite{simon-gmt}*{Ch.~5, Lemma 2.6}, we can apply the Besicovitch covering theorem for the family $\{ B_{\rho(z)}(z) : z \in B_{2r}^m \setminus K_0 \}$ and use $V \mres Z_r = W$ and $Q_z = \pi_S^{-1} (\bar{B}^S_{\rho(z)}(z)) \cap \cU_r$ to bound
\begin{equation}\label{eqn:after-besicovitch}
    \begin{split}
    (\cL^m + (\pi_S)_{\#} \|W\|) \, (B_{2r}^m\setminus K_0)  
    &\leq C \Bigl[ \int_{Z_r \times \bG(N,m)} \|T-S\|^2 \, dV + r^2 \int_{Z_r} |H_\Psi|^2 \, d\|V\| \Bigr].
    \end{split}
\end{equation}
We now set $K = K_0 \cap B_{\sigma r}^m$ and $f = f_0|_{B_{\sigma r}^m}$, which defines an $L$-Lipschitz function with $B^m_{\sigma r} \setminus K = B^m_{\sigma r} \setminus K_0$.
Over $K$, the varifold $W$ agrees with the multiplicity-one graph of $f$.
For $\ve_0<1$, the height assumption gives $|S^{\perp}x|<r$ for $x \in \textup{spt} \, \|V\| \cap B_{2r}$, hence $V \mres B_{2r} = W \mres B_{2r}$ and
\begin{equation}\label{eqn:mult-1-graph}
V \mres (B_{2r} \cap \pi_S^{-1}(K)) = v ( \{(z,f(z)) : z \in K \}, 1).
\end{equation}
Therefore, the $\|V\|$-mass of the region $(B_{2r} \cap \pi_S^{-1}(B^m_{\sigma r}) ) \setminus \{ ( z,f(z)) : z \in K \}$ is bounded by $(\pi_S)_{\#} \|W\|(B^m_{\sigma r} \setminus K)$.
Combining this bound with~\eqref{eqn:after-besicovitch}, we arrive at
\begin{equation}\label{eqn:non-graphical-measure-bound}
    \begin{split}
    &\cL^m (B^m_{\sigma r} \setminus K) + \|V\| \left( (B_{2r} \cap \pi_S^{-1}(B^m_{\sigma r}) \setminus \{ (z,f(z)) : z \in K \} \right) \\
    & \leq C \Bigl[ \int_{Z_r \times \bG(N,m)} \|T-S\|^2 \, dV + r^2 \int_{Z_r} |H_{\Psi}|^2 \, d \|V\| \Bigr].
    \end{split}
\end{equation}
It remains to control the Dirichlet energy of $f$ over this region.
At $\cL^m$-a.e.~ $z \in K$, the map $f$ is differentiable and the tangent plane to $V$ at $(z,f(z))$ is the graph of $Df(z)$.
On the bounded set $\{ |A| \leq L \}$, the quantity $|A|^2$ is uniformly equivalent to the squared distance between $\textup{graph} \, A$ and $S = \bR^m \times \{ 0 \}$.
Indeed, for unit-length vector $a \in \bR^m$, the unit vector $w := \frac{(a, Df(z) a)}{\sqrt{1 + |(Df) a|^2}}$ lies in $T_{(z,f(z))} V = \textup{graph} (Df(z))$, so we can use $|Df| \leq L$ to bound
\[
(1 + |(Df(z)) \, a|^2)^{- \frac{1}{2}} \, |(Df(z)) \, a| = \textup{dist}(w,S) \leq \|T_{(z,f(z))} V - S \|, \]
from which we deduce that
\begin{equation}
    |Df(z)|^2 \leq C_L \| T_{(z,f(z))} \textup{graph} \, f - S \|^2. \label{eqn:Df(z)-CL-graph}
\end{equation}
Next, using the multiplicity-one graph representation~\eqref{eqn:mult-1-graph} over $K$, we obtain
\begin{align*}
    \int_K |Df|^2 \, dz &\leq C_{m,L} \int_K \| T_{(z,f(z))} V - S \|^2 \, \sqrt{\det(I_m + (Df)^t Df)} \, dz \\
    &= C_{m,L} \int_{ \{ (z,f(z)) : z \in K \} } \|T_x V - S\|^2 \, d \cH^m \leq C_{m,L} \int_{Z_r \times \bG(N,m)} \|T-S\|^2 \, dV
\end{align*}
because the graph Jacobian is at least one.
On the complement of $K$, Rademacher's theorem and the Lipschitz bound imply that $|Df| \leq C_m L$ a.e. 
Therefore,
\[
\int_{B_{\sigma r}^m \setminus K} |Df|^2 \, dz \leq L^2 \cL^m ( B_{\sigma r}^m \setminus K). 
\]
The desired estimate follows from combining the preceding estimates for $|Df|^2$ over $K$ and $B^m_{\sigma r} \setminus K$ with the bound~\eqref{eqn:non-graphical-measure-bound}.
This completes the proof.
\end{proof}

\begin{lemma}\label{lemma:tilt-excess}
Fix $\sigma \in (0,1)$, $L>0$, and $M < + \infty$.
Suppose that $p>m$ and $p \geq 2$.
There exist constants $\ve_0>0$ and $C<+\infty$, depending on $\Psi, m,N,p, \sigma, L,M)$ with the following property.

Let $V$ be an integral $m$-varifold in a neighborhood of $\bar{B}_{4r}(x_0)$ with $\|V\|(B_{3r}(x_0)) \leq M r^m$ and $H_{\Psi} \in L^p(B_{3r}(x_0), \|V\|)$.
Let $A$ be an affine $m$-plane with direction $S \in \bG(N,m)$.
Suppose that $r^{-1} \textup{dist}(x_0,A) \leq \ve_0$, that the mass bound~\eqref{eqn:projected-mass-bound} holds relative to $S \in \bG(N,m)$, and that
\[
E + \Lambda^2 \leq \ve_0, \qquad \text{where} \quad E := E(V, A, x_0, 3r), \qquad \Lambda := \Lambda(V,x_0,3r).
\]
Then, after translating and rotating so that $x_0=0$ and $S = \bR^m \times \{ 0 \}$, there exists a Borel set $K \subset B^m_{\sigma r}$ and an $L$-Lipschitz map $f: B^m_{\sigma r} \to \bR^{N-m}$ such that $V \mres ( B_{2r} \cap \pi_S^{-1}(K))$ agrees with the multiplicity-one graph of $f$ over $K$, as well as
\begin{align*}
    & \int_{B^m_{\sigma r}} |Df|^2 + \cL^m ( B^m_{\sigma r} \setminus K) + \|V\| \bigl( \bigl(B_{2r}\cap\pi_S^{-1}(B_{\sigma r}^m)\bigr) \setminus \{(z,f(z)):z\in K\} \bigr) \leq C r^m \, (E + \Lambda^2).
\end{align*}
Moreover, writing $A = a+S$, where $a \in S^{\perp}$, the function $f$ can be chosen so that
\begin{align*}
r^{-1} \|f -a\|_{L^{\infty} (B^m_{\sigma r})} &\leq C \, E^{\frac{1}{m+2}}, \qquad \text{and} \qquad r^{-m-2} \int_{B^m_{\sigma r}} |f-a|^2 \leq C (E + \Lambda^2).
\end{align*}
\end{lemma}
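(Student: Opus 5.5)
The plan is to reduce Lemma~\ref{lemma:tilt-excess} to Proposition~\ref{prop:lipschitz-approximation} by verifying its hypotheses with Lemma~\ref{lemma:density-and-height-bound}, Proposition~\ref{prop:caccioppoli} and~\eqref{eqn:holder-bound-for-H-Psi}. The final $L^\infty$ and $L^2$ bounds on $f-a$ then come from splitting $B^m_{\sigma r}$ into $K$ and its complement.

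\emph{Step 1 (height).} After translating and rotating ($x_0=0$, $S=\bR^m\times\{0\}$, $A=a+S$, $a\in S^\perp$), apply Lemma~\ref{lemma:density-and-height-bound}(ii) at scale $\tfrac32 r$ (so that $B_{3r}=B_{2\cdot 3r/2}$), with $\lambda=\tfrac{11}{6}$ so that $B_{\lambda\cdot 3r/2}=B_{11r/4}$. This is possible because $\Lambda(V,0,3r)\le\ve_0^{1/2}$ and $E(V,A,0,3r)\le \ve_0$ after shrinking $\ve_0$. It gives
\[
\sup_{\textup{spt}\|V\|\cap B_{11r/4}}|S^\perp x - a|\le C r E^{\frac1{m+2}}.
\]
Together with $|a|\le \ve_0 r$, this yields $\sup_{\textup{spt}\|V\|\cap B_{11r/4}}|S^\perp x|\le (\ve_0+C\ve_0^{1/(m+2)})r$, which is as small as Proposition~\ref{prop:lipschitz-approximation} requires once $\ve_0$ is chosen small.

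\emph{Step 2 (tilt).} Since $Z_r\Subset B_{11r/4}$, cover $\bar Z_r$ by finitely many balls $B_{r/8}(y_i)$ with $y_i\in\bar Z_r$, so that $B_{r/4}(y_i)\subset B_{3r}$. Apply the second form of Proposition~\ref{prop:caccioppoli} on each $B_{r/8}(y_i)$ with plane $a+S$, using the mass bound $\|V\|(B_{r/4}(y_i))\le Mr^m$. Summing over $i$ gives
\[
r^{-m}\int_{Z_r}\|T-S\|^2\,dV\le C(E+\Lambda^2).
\]
By~\eqref{eqn:holder-bound-for-H-Psi} we also have $r^{2-m}\int_{Z_r}|H_\Psi|^2\le C\Lambda^2$. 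Hence all hypotheses of Proposition~\ref{prop:lipschitz-approximation} hold for small $\ve_0$, and it yields $K$, $f$, the graph identity, and the first estimate with right-hand side $Cr^m(E+\Lambda^2)$.

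\emph{Step 3 (bounds on $f-a$).} For $z\in K$, the point $(z,f(z))$ lies in $\textup{spt}\|V\|\cap B_{2r}$. (It lies in $B_{2r}$ because $|f(z)|$ is small by Step~1; alternatively, $K$ can be restricted to such points.) So Step~1 gives $|f(z)-a|\le CrE^{1/(m+2)}$ on $K$. To keep this bound on all of $B^m_{\sigma r}$, replace $f$ by $a+\Phi\circ(f-a)$ on $K$ and extend, where $\Phi$ is a radial retraction of $\bR^{N-m}$ onto the ball of radius $CrE^{1/(m+2)}$. Concretely, take the Kirszbraun extension of $f|_K$ (which is $L$-Lipschitz) and compose with the $1$-Lipschitz nearest-point projection onto the closed ball $\bar B(a,CrE^{1/(m+2)})$. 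This preserves the Lipschitz constant and $f|_K$, and does not increase $|Df|$ a.e. This is the step needing most care: the modified $f$ must still satisfy all conclusions of Proposition~\ref{prop:lipschitz-approximation}, and projection onto a convex set guarantees this.

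For the $L^2$ bound, on $K$ the area formula with Jacobian at least $1$ gives
\[
\int_K|f-a|^2\le\int_{B_{3r}}\textup{dist}(x,A)^2\,d\|V\|\le Cr^{m+2}E.
\]
On $B^m_{\sigma r}\setminus K$, use $|f-a|^2\le Cr^2 E^{2/(m+2)}\le Cr^2$ together with $\cL^m(B^m_{\sigma r}\setminus K)\le Cr^m(E+\Lambda^2)$. Adding the two pieces gives $r^{-m-2}\int|f-a|^2\le C(E+\Lambda^2)$.
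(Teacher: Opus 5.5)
Your proposal is correct and matches the paper's proof step by step. It uses Lemma~\ref{lemma:density-and-height-bound}(ii) at scale $\tfrac32 r$ with $\lambda=\tfrac{11}{6}$ for the height bound, Proposition~\ref{prop:caccioppoli} on a finite cover of $Z_r$ for the tilt, Proposition~\ref{prop:lipschitz-approximation}, then the Kirszbraun extension composed with nearest-point projection onto a ball, and finally the $K$ versus $B^m_{\sigma r}\setminus K$ split for the bounds on $f-a$. Your remark that one may need to discard an $\cL^m$-null subset of $K$ to guarantee $(z,f(z))\in\textup{spt}\,\|V\|$ is a point of care the paper passes over.
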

\begin{proof}
We retain the terms $E,\Lambda$ and apply the above normalization to make $(x_0,S) = (0, \bR^m \times \{ 0 \})$ and $A = a+S$ with $a \in S^{\perp}$, so $|a| = \textup{dist}(0,A) \leq \ve_0 r$.
Since $A = a+S$, for $x \in \textup{spt} \, \|V\| \cap B_{11r/4}$ we can decrease $\ve_0$, bound $|S^{\perp} x|$, and then apply Lemma~\hyperref[lemma:density-and-height-bound]{\ref{lemma:density-and-height-bound}~$(ii)$} with outer ball $B_{3r}$ to obtain
\[
|S^{\perp}x| \leq |S^{\perp} x - a| + |a| = \textup{dist}(x,A) + \textup{dist}(0,A), \qquad \sup_{\textup{spt} \, \|V\| \cap B_{11r/4}} \textup{dist}(x,A) \leq C \, E^{\frac{1}{m+2}} r \]
upon taking $R =\frac{3}{2}r$ and $\lambda = \frac{11}{6}$ there, namely with inner ball $B_{11r/4}$.
We conclude from this that
\begin{equation}\label{eqn:affine-height-bound}
\sup_{\textup{spt} \, \|V\| \cap B_{11r/4}} |S^{\perp} x| \leq ( C E^{\frac{1}{m+2}} + \ve_0) r
\end{equation}
for a constant $C = C(\Psi, m,N, \sigma, L,M)$.
Thus, the assumption of a uniform height bound in Proposition~\ref{prop:lipschitz-approximation} is satisfied after decreasing $\ve_0$, and hence, for $Z_r := Z_r(x_0,S)$ defined in~\eqref{eqn:Zr-domain},
\begin{equation}\label{eqn:V-mres=V-mres-Zr}
V \mres B_{2r} = ( V \mres Z_r) \mres B_{2r}, \qquad Z_r := \{ |\pi_S x| < \tfrac{5}{2} r, \; |S^{\perp} x| < r \}.
\end{equation}
Indeed, $x \in \textup{spt} \, \|V\| \cap B_{2r}$ has $|\pi_S x| \leq |x| < 2r < \frac{5r}{2}$, and $|S^{\perp}x|<r$ by~\eqref{eqn:affine-height-bound}.

We work with the restricted varifold $W := V \mres Z_r$ in what follows.
The projected mass hypothesis~\eqref{eqn:projected-mass-bound} gives $\|W\|(\pi_S^{-1}(B^m_{2r})) \in [\frac{3}{4}, \frac{5}{4}] \omega_m(2r)^m$.
To estimate the tilt-excess on $Z_r$, we use~\eqref{eqn:Zr-containment}, so $\bar{Z}_r \Subset B_{3r}$, to cover $Z_r$ by finitely many balls $\{ B_{\lambda_0 r}(x_i) \}_{i=1}^J$, where $\lambda_0>0$ and $J<+\infty$ depend only on $N$, such that $B_{2 \lambda_0 r}(x_i) \subset B_{3r}$ for every $i$.
Moreover, $\|V\|(B_{2 \lambda_0 r}(x_i)) \leq M r^m \leq M \lambda_0^{-m} ( \lambda_0 r)^m$.
By the Caccioppoli inequality of Proposition~\ref{prop:caccioppoli} with affine comparison plane $A$, we have
\begin{align*}
    &(\lambda_0 r)^{-m} \int_{B_{\lambda_0 r}(x_i)} \|T_x V-S\|^2 \, d\|V\|(x) \\
    &\leq C(\lambda_0 r)^{-m-2} \int_{B_{2 \lambda_0 r}(x_i)} \textup{dist}(x,A)^2 \, d\|V\|(x) + C \bigl[ (\lambda_0 r)^{1- \frac{m}{p}} \|H_{\Psi}\|_{L^p(B_{2 \lambda_0 r}(x_i), \|V\|)} \bigr]^2.
\end{align*}
We see that this quantity is bounded by $C(E+\Lambda^2)$ upon using~\eqref{eqn:holder-bound-for-H-Psi} and
\[
\int_{B_{3r}} \textup{dist}(x,A)^2 \, d\|V\|(x) = (3r)^{m+2} E \qquad \text{and} \qquad \Lambda = (3r)^{1- \frac{m}{p}} \|H_{\Psi}\|_{L^p(B_{3r},\|V\|)}.
\]
We can therefore sum over the finite cover to obtain
\begin{equation}\label{eqn:tilt-from-height-bound}
r^{-m} \int_{Z_r \times \bG(N,m)} \| T - S\|^2 \, d V \leq C ( E + \Lambda^2).
\end{equation}
Combining~\eqref{eqn:holder-bound-for-H-Psi}, ~\eqref{eqn:projected-mass-bound},  ~\eqref{eqn:affine-height-bound}, ~\eqref{eqn:tilt-from-height-bound}, and decreasing $\ve_0$, we deduce that all the hypotheses of Proposition~\ref{prop:lipschitz-approximation} are satisfied.
This produces a Borel set $K \subset B^m_{\sigma r}$ and an $L$-Lipschitz map $f: B^m_{\sigma r} \to S^{\perp}$ such that $V \mres ( B_{2r} \cap \pi_S^{-1}(K))$ agrees with the multiplicity-one graph of $f$ over $K$, and
\begin{align*}
    & \int_{B^m_{\sigma r}} |Df|^2 + \cL^m ( B^m_{\sigma r} \setminus K) + \|V\| \bigl( \bigl(B_{2r}\cap\pi_S^{-1}(B_{\sigma r}^m)\bigr) \setminus \{(z,f(z)):z\in K\} \bigr) \leq C r^m \, (E + \Lambda^2).
\end{align*}
Here we used that $W = V \mres Z_r$ and $V$ agree in $B_{2r}$, so the $\|W\|$-mass agrees with the $\|V\|$-mass.

It remains to establish the two estimates involving $f-a$.
Since $(z,f(z)) \in \textup{spt} \, \|V\| \cap B_{2r}$ for $z \in K$, the bound~\eqref{eqn:affine-height-bound} gives $|f(z)-a| = \textup{dist}( (z,f(z)),A) \leq C E^{\frac{1}{m+2}}r$.
Applying Kirszbraun's theorem~\cite{maggi}*{Theorem 7.2}, we can take a Lipschitz extension of $f-a$, compose it with the nearest-point projection of $S^{\perp}$ onto the closed ball of radius $C E^{\frac{1}{m+2}} r$, and translate back by $a$.
Adding back $a$, we obtain a $L$-Lipschitz extension that preserves the values on $K$, so the Lipschitz constant and Dirichlet energy do not increase.
This proves the bound $r^{-1} \| f - a \|_{L^{\infty}(B^m_{\sigma r})} \leq C E^{\frac{1}{m+2}}$.

Finally, to obtain the bound on $\| f-a\|_{L^2(B^m_{\sigma r})}$, we use the area formula on $K$ to bound
\[
\int_K |f-a|^2 \,  dz\leq \int_{ \{ (z,f(z)) : z \in K \} } \textup{dist}(x,A)^2 \, d \cH^m(x) \leq C E r^{m+2}
\]
by the graphical representation over $K$.
On the complement of $K$, we use the measure bound
\[
\int_{B^m_{\sigma r} \setminus K} |f-a|^2 \,dz \leq C E^{\frac{2}{m+2}} r^2 \cL^m (B^m_{\sigma r} \setminus K) \leq C E^{\frac{2}{m+2}}(E+\Lambda^2) r^{m+2}.
\]
The desired bound follows from $E \leq \ve_0$ after decreasing $\ve_0$.
This completes the proof.
\end{proof}

\begin{corollary}\label{cor:Lipschitz-approximation}
Suppose that the hypotheses of Lemma~\ref{lemma:tilt-excess} hold, and let $f$ and $K$ be the function and Borel set produced therein.
After translating, rotating, and rescaling so that $(x_0,r,S) = (0,1, \bR^m \times \{ 0 \})$, set $E := E(V,A,0,3)$ and $\Lambda := \Lambda(V,0,3)$.
Then, for $\varphi \in C_c^1(B^m_{\sigma} ; \bR^{N-m})$, we have
\begin{align*}
& \Bigl| \int_{B^m_{\sigma}} \la D F_{\Psi}(Df) - DF_{\Psi}(0) , D \varphi \rg \Bigr| \leq C(E+\Lambda^2) \, \|D \varphi \|_{L^{\infty}} + C \, \Lambda \| \varphi \|_{L^{\infty}}.
\end{align*}
\end{corollary}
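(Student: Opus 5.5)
The plan is to test the first variation of $V$ against the vertical field $g(x) := \varphi(\pi_S x)$, where $\varphi$ takes values in $S^\perp = \{0\}\times\bR^{N-m}$. This test is admissible once we cut off in the vertical direction. By \eqref{eqn:affine-height-bound}, the support of $\|V\|$ in $B_{11/4}$ lies in a thin slab $\{|S^\perp x| \leq C E^{1/(m+2)} + \ve_0\}$. We therefore multiply by $\chi(S^\perp x)$ exactly as in the proof of Proposition~\ref{prop:lipschitz-approximation}. On $\textup{spt}\,\|V\|$ the cutoff and its derivative are identically $1$ and $0$, so $\tilde g$ is a $C^1_c$ field supported in $Z_1\subset B_{3}$ and $Dg = D\varphi\circ\pi_S$ there. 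The first variation formula then gives
\[
\int \la B_\Psi(T), D\varphi(\pi_S x)\,\pi_S\rg\,dV(x,T) = -\int \la H_\Psi, \varphi(\pi_S x)\rg\, d\|V\|.
\]
The right-hand side is bounded by $\|\varphi\|_{L^\infty}\|H_\Psi\|_{L^1(B_{2}\cap\pi_S^{-1}(B^m_\sigma))}$. By H\"older and the mass bound $\|V\|(B_3)\leq M$, this is at most $C\Lambda\|\varphi\|_{L^\infty}$, which is the second error term.

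Next we split the left-hand side into the graphical part over $K$ and the non-graphical remainder. Over $K$, $V$ is the multiplicity-one graph of $f$. The key identity is the standard one from \cite{derosa-tione-regularity}*{\S 2.3}: for a $C^1$ graph of a map $u$ and a vertical field $\varphi\circ\pi_S$,
\[
\la B_\Psi(\textup{graph}\,Du), D\varphi\,\pi_S\rg\,\cA(Du) = \la DF_\Psi(Du), D\varphi\rg .
\]
I would verify this by differentiating $F_\Psi(Du+tD\varphi)$ at $t=0$ and comparing with \eqref{eqn:first-variation} for the graph varifold, which is the area formula applied to the inner variation. It is pointwise in $Du$, so by Rademacher it holds at a.e.\ $z\in K$, with $T_{(z,f(z))}V=\textup{graph}\,Df(z)$. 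The graphical contribution therefore equals $\int_K \la DF_\Psi(Df),D\varphi\rg\,dz$. We then add and subtract the integral over $B^m_\sigma\setminus K$.

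The resulting errors are as follows. The non-graphical part of $V$ is bounded by $\sup\|B_\Psi\|\,\|D\varphi\|_{L^\infty}$ times $\|V\|\bigl((B_2\cap\pi_S^{-1}(B^m_\sigma))\setminus\textup{graph}\,f|_K\bigr)$. The integral $\int_{B^m_\sigma\setminus K}|DF_\Psi(Df)|\,|D\varphi|$ uses $|Df|\leq L$ and is bounded by $C_L\,\cL^m(B^m_\sigma\setminus K)\|D\varphi\|_{L^\infty}$. Lemma~\ref{lemma:tilt-excess} bounds both quantities by $C(E+\Lambda^2)$. Finally, we subtract $DF_\Psi(0)$ for free, since $\int_{B^m_\sigma}\la DF_\Psi(0),D\varphi\rg = 0$ for compactly supported $\varphi$ by the divergence theorem.

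I expect the main obstacle to be the algebraic identity linking $B_\Psi$ and $DF_\Psi$, including a careful check that the vertical test field only sees $\pi_S$-derivatives. A second point to check is that the cutoff argument really makes $g$ admissible on $B_3$ even though $\varphi\circ\pi_S$ is not compactly supported in the vertical direction. Everything else reduces to the measure estimates already proved in Lemma~\ref{lemma:tilt-excess}.
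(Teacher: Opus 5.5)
Your proposal is correct and follows essentially the same route as the paper. Both test the first variation with the vertically cut-off field $\chi(S^\perp x)\,(0,\varphi(\pi_S x))$, identify the graphical contribution over $K$ with $\int_K\la DF_\Psi(Df),D\varphi\rg$, and bound the non-graphical part, the set $B^m_\sigma\setminus K$, and the $H_\Psi$ term via Lemma~\ref{lemma:tilt-excess} and H\"older; the paper only asserts the pointwise identity $\la B_\Psi(\textup{graph}\,Du),Dg\rg\,\cA(Du)=\la DF_\Psi(Du),D\varphi\rg$, which your check (the vertical flow maps $\textup{graph}\,Du$ to $\textup{graph}(Du+tD\varphi)$) usefully justifies.
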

\begin{proof}
We take first $\varphi \in C_c^1(B^m_{\sigma} ; \bR^{N-m})$.
For brevity, we denote by
\[
\cB:= (B_2\cap\pi_S^{-1}(B_\sigma^m)) \setminus \{(z,f(z)):z\in K\}
\]
the non-graphical portion appearing in the bounds of Proposition~\ref{prop:lipschitz-approximation} and Lemma~\ref{lemma:tilt-excess}.

The argument of Lemma~\ref{lemma:tilt-excess} implies that, $\ve_0$ and $E$ are sufficiently small, the part of $\textup{spt} \, \|V\|$ lying over $\textup{spt} \, \varphi$ is contained in a fixed thin vertical cylinder.
We choose a smooth vertical cutoff $\chi$ equal to $1$ on $B_{1/2}^{S^\perp}$ and $0$ outside $B_{3/4}^{S^\perp}$, and set $g(z,y)=\chi(y)(0,\varphi(z))$. 
Because $\sigma<1$, this field has compact support in $B_2$.
Moreover, the height estimate on $B_{11/4}$ implies that $D\chi=0$ on the portion of $\operatorname{spt}\|V\|$ contributing to $g$ or $Dg$.
Consequently, testing the vertical variation on the multiplicity-one graph over $K$ produces the term $\int_K \la DF_{\Psi}(Df) , D \varphi \rg$.
The contribution to this integral coming from the region $\cB$ is bounded by $C \| D \varphi \|_{L^{\infty}} \|V\|(\cB)$.
On the other hand, since $DF_{\Psi}(Df)$ is uniformly bounded for $|Df| \leq L$, we can estimate the contribution away from $K$ by
\[
\Bigl| \int_{B^m_{\sigma} \setminus K} \la D F_{\Psi}(Df) - DF_{\Psi}(0), D \varphi \rg \Bigr| \leq C \|D \varphi\|_{L^{\infty}} \cL^m ( B^m_{\sigma} \setminus K).
\]
Because $\varphi \in C_c^1(B^m_{\sigma}; \bR^{N-m})$ and $DF_{\Psi}(0)$ is constant, we have $\int_{B^m_{\sigma}} \la D F_{\Psi}(0), D \varphi \rg = 0$.
Finally, we combine the above terms and use the first variation identity~\eqref{eqn:first-variation} for the vector field to estimate the above terms, bounding $H_{\Psi}$ via~\eqref{eqn:holder-bound-for-H-Psi}.
Combining these properties, we arrive at
\begin{align*}
& \Bigl| \int_{B^m_{\sigma}} \la DF_{\Psi}(Df) - DF_{\Psi}(0), D \varphi \rg \Bigr| \\
&\qquad \leq C \|D \varphi\|_{L^{\infty}} \, [ \cL^m (B^m_{\sigma} \setminus K) + \|V\| ( \cB) \,] + C \| \varphi\|_{L^{\infty}} \| H_{\Psi} \|_{L^p(B_2, \|V\|)}.
\end{align*}
The claimed bound follows from~\eqref{eqn:holder-bound-for-H-Psi} and the estimates of Lemma~\ref{lemma:tilt-excess}.
\end{proof}

\section{Proof of the main theorem}\label{section:initiate-the-procedure}

We now prove Theorem~\ref{thm:allard}.
The main ingredient of the proof is the iteration proving the decay of the excess; we follow the strategy of~\cite{derosa-tione-regularity}*{Proposition 4.5}, using the results of Section~\ref{sec:preliminaries}.
The only differences are that the Lipschitz map is obtained from Proposition~\ref{prop:lipschitz-approximation} and agrees with \(V\) only outside a set of quadratic size, and that in the final step we use Proposition~\ref{prop:caccioppoli} together with Lemma~\ref{lemma:density-and-height-bound} to transfer the affine approximation from the graph back to the varifold.
A key new step is the estimate~\eqref{eqn:estimate-non-graphical-region} for the varifold mass of the non-graphical portion $\cB$, namely the part not covered by the Lipschitz graph of Proposition~\ref{prop:lipschitz-approximation}.

Recall the notation of~\eqref{eqn:EVSx0r-LambdaVx0r} for the excess and the rescaled $L^p$ norm of the $\Psi$-mean curvature.

\begin{proposition}\label{prop:one-step-iteration}
Fix $\sigma \in (0,1)$ and $L,M<\infty$ as in Lemma~\ref{lemma:tilt-excess}.
Then, there exists a $C = C(\Psi,\cN,m,N,p,\sigma,L,M)<\infty$ such that for every $0<\tau<\frac{\sigma}{100}$, we can find an $\ve_0$, depending on $(\Psi,\cN,m,N,p,\sigma,L,M,\tau)$, with the following property.

Let $V$ be an $m$-varifold in an open neighborhood of $B_{4r}(x_0)$ satisfying the hypotheses of Lemma~\ref{lemma:tilt-excess} relative to an affine $m$-plane $A$ with direction $S \in \bG(N,m)$ and
\[
E := E(V,A,x_0,3r) \leq \ve_0, \qquad \Lambda := \Lambda(V,x_0,3r) \leq E.
\]
Then, there exists an affine $m$-plane $A'$ with direction $S' \in \bG(N,m)$ and such that
\begin{align}
    \|S'-S\| \leq C E^{\frac{1}{2}}, \qquad r^{-1} \textup{dist}(x_0,A') &\leq r^{-1} \textup{dist}(x_0,A) + C E^{\frac{1}{2}}, \qquad \text{and} \label{eqn:planes-close-to-S}  \\
    E(V,A',x_0,3 \tau r) &\leq C \tau^2 E. \label{eqn:quadratic-excess-decay}
\end{align}
\end{proposition}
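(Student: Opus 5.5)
The plan is the standard harmonic-approximation blow-up argument, with the constant-coefficient elliptic system for $D^2F_\Psi(0)$ playing the role of the Laplacian. Normalize $(x_0,r,S)=(0,1,\bR^m\times\{0\})$ and $A=a+S$ with $a\in S^\perp$, $|a|\leq\ve_0$. Lemma~\ref{lemma:tilt-excess} and Corollary~\ref{cor:Lipschitz-approximation} then give an $L$-Lipschitz $f$ on $B^m_\sigma$ and a Borel set $K$ with
\[
\int|Df|^2+\cL^m(B^m_\sigma\setminus K)+\|V\|(\cB)\leq C(E+\Lambda^2)\leq CE,
\qquad
\int|f-a|^2\leq CE .
\]
Since $\Lambda\leq E$, the Euler--Lagrange defect satisfies
\[
\Bigl|\int\la DF_\Psi(Df)-DF_\Psi(0),D\varphi\rg\Bigr|\leq CE\|D\varphi\|_\infty+CE\|\varphi\|_\infty .
\]
Set $u:=(f-a)/E^{1/2}$, so that $\|u\|_{W^{1,2}(B_\sigma)}\leq C$. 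Taylor expansion gives
\[
DF_\Psi(Df)-DF_\Psi(0)=D^2F_\Psi(0)Df+O(|Df|^2).
\]
Hence $u$ is an approximate weak solution of the linear system with tensor $\mathbb{A}:=D^2F_\Psi(0)$, with defect of order $E^{1/2}(\|D\varphi\|_\infty+\|\varphi\|_\infty)$. This tensor satisfies Legendre--Hadamard by Proposition~\ref{prop:quasiconvexity}, with uniform constants after rotation since $\|A\|=0$.

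First I would prove a harmonic-approximation lemma by contradiction and compactness. For every $\eta>0$ there is $\ve_0$ such that $u$ is $\eta$-close in $L^2(B_{\sigma/2})$ to some $\mathbb{A}$-harmonic $v$ with $\|v\|_{W^{1,2}}\leq C$. To prove it, take a sequence $E_j\to0$ and pass to the limit $u_j\rightharpoonup v$ weakly in $W^{1,2}$ and strongly in $L^2$. The quadratic error satisfies $E_j^{-1/2}\int|Df_j|^2\leq CE_j^{1/2}\to0$, so it disappears in the limit. The rotated anisotropies $\Psi_{R_j}$ lie in a compact family, so their tensors converge along a subsequence; the Legendre--Hadamard constant is uniform, so the limit system is strongly elliptic. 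Constant-coefficient Legendre--Hadamard systems have smooth solutions with interior estimates, in particular $\sup_{B_{\sigma/4}}|D^2v|\leq C\|v\|_{L^2}$. Let $\ell(z):=v(0)+Dv(0)z$. Then
\[
\sup_{B_{4\tau}}|v-\ell|\leq C\tau^2 ,
\]
and therefore $\int_{B_{4\tau}}|u-\ell|^2\leq C\tau^{m+4}+\eta$. Choosing $\eta=\tau^{m+4}$ fixes $\ve_0$ depending on $\tau$. I then define $A'$ as the graph of $a+E^{1/2}\ell$. Its direction $S'=\mathrm{graph}(E^{1/2}Dv(0))$ satisfies $\|S'-S\|\leq CE^{1/2}$, and $\mathrm{dist}(0,A')\leq|a|+CE^{1/2}|v(0)|$. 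This gives~\eqref{eqn:planes-close-to-S}.

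The main obstacle is transferring the decay from $f$ back to $V$, i.e.\ proving~\eqref{eqn:quadratic-excess-decay}:
\[
(3\tau)^{-m-2}\int_{B_{3\tau}}\mathrm{dist}(x,A')^2\,d\|V\|\leq C\tau^2E .
\]
I split $\|V\|\mres B_{3\tau}$ into the graphical part over $K$ and the non-graphical part $\cB$. On the graph, the area formula with Jacobian at most $C_L$, together with $\mathrm{dist}((z,f(z)),A')\leq|f(z)-a-E^{1/2}\ell(z)|$, bounds that contribution by $CE(\tau^{m+4}+\eta)$. On $\cB$, the height bound of Lemma~\ref{lemma:density-and-height-bound} gives $\mathrm{dist}(x,A')\leq CE^{1/(m+2)}+CE^{1/2}$ on the support. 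Its contribution is therefore at most $CE^{2/(m+2)}\cdot\|V\|(\cB)\leq CE^{1+2/(m+2)}$, which is at most $\tau^{m+4}E$ once $\ve_0$ is small depending on $\tau$. Dividing by $(3\tau)^{m+2}$ gives $C\tau^2E$. One further point needs care: $B_{3\tau}$ must lie inside $B_2\cap\pi_S^{-1}(B^m_\sigma)$, where the decomposition is valid. This holds because $3\tau<\sigma/30$.
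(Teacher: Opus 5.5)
Your proposal is correct and follows essentially the same route as the paper: a compactness blow-up of $(f_j-a_j)/E_j^{1/2}$ to a weak solution of the constant-coefficient Legendre--Hadamard system for $D^2F_{\Psi_\infty}(0)$, followed by the affine approximation $\ell$, the plane $A'=\mathrm{graph}(a+E^{1/2}\ell)$, and the same graphical/non-graphical splitting of $B_{3\tau}$. The only difference in organization is that you package the compactness step as a separate approximation lemma rather than contradicting the proposition directly.

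One small correction concerns the Taylor remainder. Since $\Psi$ is only $C^2$, the remainder in $DF_\Psi(Df)-DF_\Psi(0)-D^2F_\Psi(0)Df$ is $o(|Df|)$, not $O(|Df|^2)$. It still vanishes after dividing by $E_j^{1/2}$, for example by writing
\[
DF_{\Psi_j}(Df_j)-DF_{\Psi_j}(0)=\int_0^1 D^2F_{\Psi_j}(tDf_j)\,dt\,[Df_j].
\]
The ingredients are then $Df_j\to 0$ in $L^2$, the bound $|Df_j|\le L$, equicontinuity of $D^2F_{\Psi_j}$ on $\{|X|\le L\}$, and dominated convergence, exactly as the paper does.
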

\begin{proof}
Arguing by contradiction, suppose that for some fixed $\tau \in (0, \frac{\sigma}{100})$ the assertion fails.
By scaling and translating, fix $(x_0,r)=(0,1)$ and obtain integral varifolds $\tilde{V}_j$ and affine planes $\tilde{A}_j$ of directions $S_j$ such that
$\Lambda( \tilde{V}_j,0,3) \leq E(\tilde{V}_j, \tilde{A}_j,0,3) \to 0$, but the conclusion fails. 
We next rotate these configurations: consider maps $R_j \in O(N)$ so that $R_j S_j R_j^t = S := \bR^m \times \{ 0 \}$ and, after passing to a subsequence, extract a limit $R_j \to R_{\infty}$ because $O(N)$ is compact.
This also yields $\Psi_j \to \Psi_{\infty}$ in $C^2(\bG(N,m))$, where $\Psi_j(T) := \Psi(R^t_j T R_j)$.
The rotated varifolds $V_j := (R_j)_{\#} \tilde{V}_j$ and rotated anisotropies $\Psi_j$ satisfy the properties~\eqref{eqn:replace-Psi-R} and have
\begin{equation}\label{eqn:Ej-Lambdaj}
E_j := E(V_j, A_j,0,3) \to 0, \qquad \Lambda_j := \Lambda (V_j,0,3) \leq E_j, \qquad A_j := R_j \tilde{A}_j = a_j + S,
\end{equation}
but there exists no affine plane satisfying the asserted $O(E_j^{1/2})$ closeness to $S$ from~\eqref{eqn:planes-close-to-S} that gives the decay~\eqref{eqn:quadratic-excess-decay}, with $C = C(\Psi, m,N, \sigma, L,M)$ chosen sufficiently large below.
We may assume that $E_j>0$, else if $E_j=0$, then $\Lambda_j=0$ and the conclusion holds trivially with $A'=A_j$.

Applying Lemma~\ref{lemma:tilt-excess} to $V_j$, we obtain $L$-Lipschitz maps $f_j : B^m_{\sigma} \to \bR^{N-m}$ and Borel sets $K_j \subset B^m_{\sigma}$ such that $V_j$ agrees over $K_j$ with the multiplicity-one graph of $f_j$, and
\begin{equation}\label{eqn:extract-a-limit-1}
\int_{B^m_{\sigma}} |Df_j|^2 + \cL^m(B^m_{\sigma} \setminus K_j) + \|V_j\|(\cB_j) \leq C E_j,
\end{equation}
where $\cB_j$ denotes the non-graphical portion of $V_j$ in $B_2$, namely
\[
\cB_j := (B_2 \cap \pi_S^{-1}(B^m_{\sigma}) ) \setminus \{ (z, f_j(z)) : z \in K_j \}.
\]
Here, we used the bound $\Lambda_j \leq C E_j$ to absorb the bound on the left-hand side into $E_j$.
Moreover, 
\begin{equation}\label{eqn:extract-a-limit-2}
\int_{B^m_{\sigma}} |f_j - a_j|^2 \leq C E_j, \qquad \|f_j - a_j \|_{L^{\infty}(B^m_{\sigma})} \leq C E_j^{\frac{1}{m+2}} \to 0.
\end{equation}
We consider the rescaled functions $v_j := \frac{f_j - a_j}{E_j^{1/2}}$.
By the properties~\eqref{eqn:extract-a-limit-1} and~\eqref{eqn:extract-a-limit-2}, the sequence is bounded in $W^{1,2}(B^m_{\sigma} ; \bR^{N-m})$.
Thus, we can pass to a subsequence and extract a limit
\begin{equation}\label{eqn:L2-convergence}
v_j \rightharpoonup v \quad \text{weakly in } \; W^{1,2}(B^m_{\sigma}), \qquad v_j \to v \quad \text{strongly in } \; L^2(B^m_{\sigma}).
\end{equation}
We claim that the function $v$ is a weak solution of the linear system
\begin{equation}\label{eqn:solve-linear-system}
\int_{B^m_{\sigma}} D^2 F_{\Psi_{\infty}}(0) \, [Dv , D \varphi] = 0, \qquad \text{for every } \; \varphi \in C_c^1(B^m_{\sigma} ; \bR^{N-m}).
\end{equation}
Indeed, let us consider the rescaled non-parametric integrand 
\begin{align*}
F_j(X) := E_j^{-1} \bigl[ F_{\Psi_j}( E_j^{1/2} X) - F_{\Psi_j}(0) - E_j^{1/2} \, \la D F_{\Psi_j}(0), X \rg \bigr], \\
\text{so } \quad DF_j(Dv_j) = E_j^{- \frac{1}{2}} \, \bigl[ DF_{\Psi_j} (Df_j) - DF_{\Psi_j}(0) \bigr].
\end{align*}
We apply Corollary~\ref{cor:Lipschitz-approximation} and divide the resulting inequality by $E^{1/2}_j$ to obtain
\[
\Bigl| \int_{B^m_{\sigma}} \la DF_j(Dv_j), D \varphi \rg \Bigr| \leq C \frac{E_j + \Lambda_j^2}{E_j^{1/2}} \|D \varphi \|_{L^{\infty}} + C \frac{\Lambda_j}{E_j^{1/2}} \| \varphi \|_{L^{\infty}}.
\]
The assumption~\eqref{eqn:Ej-Lambdaj} implies that the right-hand side tends to zero.
On the other hand,
\[
DF_j(Dv_j) = \int_0^1 D^2 F_{\Psi_j} (t Df_j) [Dv_j] \, dt
\]
Since $Df_j \to 0$ strongly in $L^2$ due to~\eqref{eqn:extract-a-limit-1}, and $|Df_j| \leq L$ for each $j$, up to a subsequence we find $Df_j \to 0 $ a.e.
The continuity of $D^2 F_{\Psi_{\infty}}$ together with the convergence $\Psi_j \to \Psi_{\infty}$ in $C^2(\bG(N,m))$, as well as the dominated convergence theorem, implies that $\int_0^1 D^2 F_{\Psi_j} (t Df_j) \, dt \to D^2 F_{\Psi_{\infty}}(0)$ strongly in $L^q$, for every finite $q$.
Since $Dv_j \rightharpoonup Dv$ in $L^2$ and $D \varphi \in L^{\infty}$, the claim~\eqref{eqn:solve-linear-system} follows from combining the above estimates and sending $j \to \infty$.

By Proposition~\ref{prop:quasiconvexity}, $D^2 F_{\Psi_{\infty}}(0)$ satisfies the uniform Legendre-Hadamard condition.
Moreover, the constants $\alpha = \alpha_{\cN,\Psi}>0$ and $c_{\Psi,\cN,R}>0$ obtained therein are independent of $j$, because each rotated anisotropy $\Psi_j$ is again QEC with the rotated Lipschitz field $\cN_j(T) = R_j \cN(R^t_j T R_j) R_j^t$.
Also, the $C^2$-orbit of $\Psi$ under $O(N)$ is compact, so we obtain a uniform positive lower bound and deduce the uniform continuity of the Hessians.
Thus,~\eqref{eqn:solve-linear-system} is a constant-coefficient strongly elliptic system, and classical interior estimates~\cite{giaquinta-martinazzi}*{\S 5.2} yield
\begin{equation}\label{eqn:v-interior-estimates}
    \sup_{B^m_{\sigma/2}} |D^2 v| + |v(0)| + |Dv(0)| \leq C \, \|v\|_{W^{1,2}(B^m_{\sigma})} \leq C.
\end{equation}
We combine Taylor's theorem for the linear expansion of $v$ with the bound~\eqref{eqn:v-interior-estimates} to find, for $\tau < \frac{1}{20} \sigma$,
\begin{align*}
& |v(x) - v(0) - Dv(0) \, x| \leq C |x|^2 \|v \|_{L^2(B_{\sigma/2}^m)}, \\
& \qquad \implies \int_{B^m_{4 \tau}} |v-\ell|^2 \leq C \tau^{m+4} \qquad \text{for } \; \ell(z) := v(0) + Dv(0) z.
\end{align*}
Let $A'_j$ be the graph of the affine map $z \mapsto a_j+ E_j^{1/2} \, \ell(z)$ with direction $S'_j$, so~\eqref{eqn:v-interior-estimates} gives
\begin{equation}\label{eqn:planes-stay-close}
\|S'_j - S\| \leq C E_j^{\frac{1}{2}}, \qquad \textup{dist}(0,A'_j) \leq |a_j| + C E_j^{\frac{1}{2}} = \textup{dist}(0,A_j) + C E_j^{\frac{1}{2}}.
\end{equation}
Next, we consider the graphical portion of $V_j$ and estimate
\begin{align*}
& \textup{dist} \bigl( (z, f_j(z)), A'_j \bigr) \leq |f_j(z) -a_j- E_j^{\frac{1}{2}} \, \ell(z)| = E_j^{\frac{1}{2}} \, |v_j(z) - \ell(z)|, \\
&\qquad \implies E_j^{-1} \int_{ \{ (z, f_j(z)) : z \in K_j \} \cap B_{3\tau} } \textup{dist} (x, A'_j)^2 \, d \|V_j \|(x) \leq C \int_{B^m_{4 \tau}} |v_j - \ell|^2
\end{align*}
using the area formula and the uniform Lipschitz bound.
The convergence~\eqref{eqn:L2-convergence} implies
\begin{equation}\label{eqn:limsup-Ej-tau}
\limsup_{j \to \infty} E_j^{-1} \int_{ \{ (z,f_j(z)) : z \in K_j \} \cap B_{3\tau} } \textup{dist}(x, A'_j)^2 \, d \|V_j \| \leq C \tau^{m+4}.
\end{equation}
It remains to estimate the non-graphical portion $\cB_j$.
Combining the inequality~\eqref{eqn:extract-a-limit-1} with the height bound of Lemma~\ref{lemma:density-and-height-bound}, we obtain on $B_{3 \tau} \subset B_{3/2}$ the bound
\[
E_j^{-1} \|V_j \|(\cB_j) \leq C, \qquad \implies \qquad \sup_{\textup{spt} \, \|V_j \| \cap B_{3 \tau}} \textup{dist}(x,A_j) \leq C E_j^{\frac{1}{m+2}} \to 0.
\]
Moreover, on $B_{3 \tau}$, we have $\sup_{\textup{spt} \, \|V_j \| \cap B_{3 \tau}} \textup{dist}(x,A'_j) \to 0$ as well, due to
\[
\sup_{x \in B_{3 \tau}} |\textup{dist}(x, A'_j) - \textup{dist}(x,A_j)| \leq C E_j^{\frac{1}{2}}, \qquad \text{where} \quad E_j \to 0.
\]
Combining these steps, we arrive at
\begin{equation}\label{eqn:estimate-non-graphical-region}
E_j^{-1} \int_{\cB_j \cap B_{3\tau}} \textup{dist} (x, A_j')^2 \, d \|V_j \|(x) \leq E_j^{-1} \|V_j \|(\cB_j) \sup_{\textup{spt} \, \|V_j \| \cap B_{3 \tau}} \textup{dist}(x, A_j')^2 \to 0.
\end{equation}
Combining this bound with~\eqref{eqn:limsup-Ej-tau}, we deduce that
\[
    \limsup_{j \to \infty} E_j^{-1} \int_{B_{3\tau}} \textup{dist} (x, A'_j)^2 \, d\|V_j\| \leq C \tau^{m+4},
\]
and hence $\limsup_{j \to \infty} E_j^{-1} E(V_j, A'_j, 0, 3 \tau) \leq C' \tau^2$ for some $C' = C(\Psi, m,N,\sigma,L,M)$.
Choosing the constant $C$ in the statement larger than $C'$ gives a contradiction.
This proves our assertion.
\end{proof}

In Allard's argument~\cite{allard-regularity}*{\S 2.5}, it is proved that, for small excess and first variation relative to the local scale, the projected density remains close to a single integer throughout the iteration across scales.
Consequently, after decreasing the initial smallness parameter, the mass hypothesis required for the Lipschitz approximation is preserved at every scale.
We establish this step now.

We will specify the constant $M := \omega_m 6^m$ in the assumptions of Propositions~\ref{prop:lipschitz-approximation} through~\ref{prop:one-step-iteration}.

\begin{lemma}\label{lemma:persistence-of-mass-one}
Let $\Psi \in C^1( \bG(N,m), (0,\infty))$ satisfy AC and the Michael-Simon inequality, and let $p>m$ and $\theta \in (0,\frac{1}{4})$.
There is a $\tau_p = \tau_p(m,\theta)>0$ so that for $\tau \in (0, \frac{1}{4})$, there exists an $\ve_0 = \ve_0(\Psi,m,N,p,\theta,\tau)$ with the following property.
Let $V$ be an integral $m$-varifold in a neighborhood of $\bar{B}_{2r}(x_0)$ with $x_0 \in \textup{spt} \, \|V\|$, $\Lambda(V,x_0,2r) \leq \ve_0$, and, for some $S \in \bG(N,m)$,
\begin{align*}
    \left| \frac{\|V\|(B_{2r}(x_0))}{\omega_m (2r)^m} - 1 \right| \leq \tau_p, \qquad r^{-m} \int_{B_{2r}(x_0)} \|T_x V - S \|^2 \, d \|V\|(x) \leq \ve_0,
\end{align*}
Then, the following properties hold for every $y \in \textup{spt} \, \|V\| \cap B_{\theta r}(x_0)$, every affine $m$-plane $A = a'+S'$ whose direction $S' \in \bG(N,m)$ satisfies $r^{-1} \textup{dist}(x_0,A) + \|S'-S \| \leq \ve_0$, and every $j \in \{1,2,3\}$: 
\begin{align}
    (\theta r)^{-1} \sup_{x \in \textup{spt} \, \|V\| \cap B_{3\theta r}(y)} \textup{dist}(x, A) + (\theta r)^{-1} \sup_{\textup{spt} \, \|V\| \cap B_{3 \theta r}(y)} \textup{dist}(x,y+S') &< \tau, \label{eqn:plane-and-affine-plane-converge}\\
\Biggl| \frac{ \|V\| \bigl( B_{3 \theta r}(y) \cap \{ x : |\pi_{S'}(x-y)| < 2 \theta r \} \bigr) }{\omega_m (2 \theta r)^m} - 1 \Biggr| < \tau \qquad \text{and} \qquad \Biggl| \frac{\|V\|(B_{j \theta r}(y))}{\omega_m (j \theta r)^m} - 1 \Biggr| &< \tau .\label{eqn:v-3-theta-r}
\end{align}
\end{lemma}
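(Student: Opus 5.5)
We argue by contradiction and compactness. Fix $\theta$ and $\tau$, choose $\tau_p$ below, and suppose that for a sequence $\ve_j \downarrow 0$ there are integral varifolds $V_j$, scaled and translated so that $(x_0,r)=(0,1)$, satisfying the hypotheses with $\ve_0=\ve_j$, but some point $y_j \in \textup{spt}\,\|V_j\| \cap B_\theta$, plane $A_j=a'_j+S'_j$ and index $j' \in \{1,2,3\}$ violate \eqref{eqn:plane-and-affine-plane-converge} or \eqref{eqn:v-3-theta-r}. After rotating, we may take $S=\bR^m\times\{0\}$ and $\Psi_j\to\Psi_\infty$ in $C^1$, exactly as in the proof of Proposition~\ref{prop:one-step-iteration}. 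The masses $\|V_j\|(B_2)$ are bounded, $\Lambda(V_j,0,2)\to 0$, and Lemma~\ref{lemma:density-and-height-bound}$(i)$ gives the uniform lower density bound $\|V_j\|(B_s(x))\geq c_0 s^m$ at support points. Hence the compactness theorem of De Philippis--Pigati~\cite{dephilippis-pigati}, available under AC together with the Michael--Simon inequality, yields a subsequence with $V_j \to V_\infty$ as varifolds in $B_2$, with $V_\infty$ integral and $\Psi_\infty$-stationary. The lower density bound also gives Hausdorff convergence of supports on compact subsets, so $y_j\to y_\infty\in\textup{spt}\,\|V_\infty\|\cap\bar B_\theta$. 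We also have $A_j \to S$ and $S'_j\to S$.

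Next we identify $V_\infty$. The tilt excess tends to zero, so $V_\infty = \|V_\infty\|\otimes\delta_S$. Its $\Psi_\infty$-stationarity then reads $\int \la B_{\Psi_\infty}(S), Dg\rg\,d\|V_\infty\|=0$. Since $B_{\Psi_\infty}(S)$ has rank $m$ with image $S$ and $B_{\Psi_\infty}(S)S=B_{\Psi_\infty}(S)$, the constancy theorem (as in the rectifiability argument of~\cite{derosa-ghiraldin}) shows that $\|V_\infty\|$ is invariant under translations along $S$. An integral varifold with constant tangent $S$ is therefore a locally finite union of parallel planes $z_i+S$ with integer multiplicities in $B_2$. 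Because $0\in\textup{spt}\,\|V_j\|$, the plane $S$ through the origin appears. Its contribution to $\|V_\infty\|(\bar B_2)$ is at least $\omega_m 2^m$, and any other sheet contributes a positive amount. This is the key step, and the most delicate one: we must choose $\tau_p(m,\theta)$ small enough to exclude every additional sheet meeting $B_{4\theta}$. Such a sheet $z+S$ with $|z|<4\theta<1$ carries mass $\omega_m(4-|z|^2)^{m/2}\geq\omega_m(4-16\theta^2)^{m/2}$ in $B_2$. Taking $\tau_p<(1-4\theta^2)^{m/2}$ and using upper semicontinuity of mass on the closed ball forbids it. Sheets farther from the origin may exist, but they do not meet $\bar B_{3\theta}(y_\infty)$, since $|y_\infty|\le\theta$ and $4\theta<1$. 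Likewise, a multiplicity of $2$ or more on $S$ is excluded. Consequently $V_\infty\mres B_{4\theta}=v(S,1)\mres B_{4\theta}$, and $y_\infty\in S$.

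Finally, we pass to the limit in each quantity. Hausdorff convergence of the supports to $S$ on $\bar B_{3\theta}(y_\infty)$, together with $A_j\to S$, $S'_j\to S$ and $y_j\to y_\infty\in S$, makes both suprema in \eqref{eqn:plane-and-affine-plane-converge} tend to $0$. The sets in \eqref{eqn:v-3-theta-r} converge to sets whose boundaries are $\|V_\infty\|$-null. These are the sphere $\partial B_{j'\theta}(y_\infty)$, and the cylinder boundary over $S'_j\to S$ intersected with $S$, which is an $(m-1)$-sphere. Varifold convergence therefore gives convergence of the mass ratios. The sets converge to $B_{j'\theta}(y_\infty)\cap S$ and to $\{x\in S:|x-y_\infty|<2\theta\}$, since $2\theta<3\theta$, each of mass exactly $\omega_m(\cdot)^m$. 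This contradicts the assumed failure for large $j$. The one technical point to handle carefully is the moving cylinder. We control it by noting that the symmetric difference between the cylinders for $S'_j$ and for $S$, restricted to a thin slab around $S$, shrinks to an $\|V_\infty\|$-null set.
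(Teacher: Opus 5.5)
Your proposal is correct and follows essentially the same route as the paper. The shared steps are: contradiction plus De Philippis--Pigati compactness, with Lemma~\ref{lemma:density-and-height-bound}$(i)$ giving Hausdorff convergence of supports; identification of the limit as a locally finite sum of integer-multiplicity planes parallel to $S$, via $B_{\Psi}(S)^t e=\Psi(S)e$ for $e\in S$; exclusion of higher multiplicity and of extra sheets near the origin by the mass bound, where your choice $\tau_p<(1-4\theta^2)^{m/2}$ with $B_{4\theta}$ works as well as the paper's choice with $B_{5\theta}$; and inner--outer approximation for the balls and the moving cylinders.

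Two small corrections. The upper bound on the limit mass comes from lower semicontinuity on the open ball, $\|V_\infty\|(B_2)\leq\liminf_j\|V_j\|(B_2)\leq(1+\tau_p)\,\omega_m 2^m$; upper semicontinuity on $\bar B_2$ gives an inequality in the wrong direction. Also, the rotation is unnecessary: the paper keeps $\Psi$ fixed and simply extracts $S_j\to S$, which avoids applying the compactness theorem to the varying anisotropies $\Psi_j$.
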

\begin{proof}
By translating and rescaling, we can assume that $(x_0, r) = (0,1)$.
Also, fix a $\tau_p < \frac{1}{10} (1 - 10 \theta^2)^{\frac{m}{2}}$.
We argue by contradiction: fix $\tau, \theta \in (0, \frac{1}{4})$ and suppose that the assertion is false for every $\ve_0>0$.
Then, there exists a sequence of integral $m$-varifolds $V_k$ in a neighborhood of $\bar{B}_2$, points $y_k \in \textup{spt} \, \|V_k \| \cap B_{\theta}$, and affine $m$-planes $A_k = a_k + S'_k$ such that, for some $S_k \in \bG(N,m)$,
\begin{align}
0 \in \textup{spt} \, \|V_k \|, \qquad \bigl| (\omega_m 2^m)^{-1} \|V_k \|(B_2) - 1 \bigr| \leq \tau_p, \label{eqn:contradiction-assumptions} \\
\int_{B_2 \times \bG(N,m)} \|T-S_k\|^2 \, dV_k(x,T) \to 0, \qquad 
\textup{dist}(0,A_k) + \| S'_k - S_k \| \to 0,
\label{eqn:convergence-of-planes}
\end{align}
while $\|H_{\Psi}(V_k) \|_{L^p (B_2, \|V_k\|)} \to 0$, but one of the conclusions of~\eqref{eqn:plane-and-affine-plane-converge},\eqref{eqn:v-3-theta-r} fails for $V_k, y_k, A_k, S'_k$.
We normalize $A_k = a_k + S'_k$ with $a_k \in (S'_k)^{\perp}$, after replacing $a_k$ by $\tilde{a}_k := (I - S'_k) a_k$.
Then, 
\begin{equation}\label{eqn:dist-x-Ak}
    |a_k| = \textup{dist}(0,A_k) \to 0, \qquad \textup{dist}(x,A_k) = |(I - S'_k) (x - a_k)|.
\end{equation}
The assumed mass bound gives $\sup_k \|V_k \|(B_2) < \infty$, and H\"older's inequality gives, for $K \Subset B_2$,
\[
|\delta_{\Psi} V_k|(K) \leq \int_K |H_{\Psi,k}| \, d\|V_k\| \leq \|H_{\Psi}(V_k)\|_{L^p(B_2, \|V_k\|)} \| V_k \|(B_2)^{1 - \frac{1}{p}} \to 0.
\]
Because $\Psi$ satisfies AC and the Michael-Simon inequality,~\cite{dephilippis-pigati}*{Proposition 1.9 and Corollary 1.11} give compactness of integer-rectifiable varifolds: after passing to a subsequence, we obtain $V_k \rightharpoonup V$, for $V$ an integral $m$-varifold in $B_2$.
Moreover, the uniform density bound of Lemma~\hyperref[lemma:density-and-height-bound]{\ref{lemma:density-and-height-bound}~$(i)$} and the Hausdorff convergence result from~\cite{dephilippis-pigati}*{Corollary 1.12(ii)} implies that $0 \in \textup{spt} \, \|V\|$, since $0 \in \textup{spt} \, \|V_k\|$ for each $k$.
The upper mass bound passes to the limit, so $\|V\|(B_2) \leq (1+\tau_p) \omega_m 2^m$.
By the compactness of $\bG(N,m)$, after passing to a further subsequence, we have $S_k \to S \in \bG(N,m)$.
Thus, we also obtain $S'_k \to S$, due to
\[
\| S'_k - S \| \leq \| S'_k - S_k \| + \| S_k - S \| \to 0.
\]
We next identify the limiting varifold.
For every non-negative $\zeta \in C_c(B_2)$, we can use the properties~\eqref{eqn:contradiction-assumptions}, ~\eqref{eqn:convergence-of-planes} and the varifold convergence to obtain $\int \zeta(x) \|T-S\|^2 \, dV(x,T)=0$.
Thus, $T_x V = S$ for $\|V\|$-a.e. $x \in B_2$.
Likewise, we find
\[
[\delta_{\Psi} V](g) = \lim_{k \to \infty} [\delta_{\Psi} V_k](g) = 0, \qquad \forall \; g \in C_c^1(B_2;\bR^N),
\]
hence $V$ is $\Psi$-stationary.
For $e \in S$, we have $B_{\Psi}(S)^t e = \Psi(S) e$, and testing with $g = e \varphi$ gives $\int \la e, D \varphi \rg \, d\|V\| = 0$ for every $\varphi \in C_c^1(B_2)$.
Therefore, $\partial_e \|V\| = 0$ as distributions for every $e \in S$.
After identifying $S \simeq \bR^m = \textup{span} \{ e_1, \dots, e_m \}$, the distributional identities $\partial_{e_i} \|V\| = 0$ for $1 \leq i \leq m$ imply that $\|V\|$ is locally invariant under translations in the direction of $S$: for every cylinder $U \times W \Subset ( S \times S^{\perp}) \cap B_2$, there is a Radon measure $\nu$ on $W$ such that $\|V\| \mres (U \times W) = (\cH^m \mres U) \otimes \nu$.
Since $\|V\|$ is $m$-rectifiable and integral, we have $\|V\| = ( \cH^m \mres S ) \otimes \nu $ where the transverse measure $\nu$ is purely atomic.
The atoms have integer weights and are locally finite.
Thus, $V$ is locally a sum of integer-multiplicity affine planes parallel to $S$, $V \mres B_2 = \sum_{\alpha} q_{\alpha} \, |S+a_{\alpha}| \mres B_2$ for $q_{\alpha} \in \bN$ and $a_{\alpha} \in S^{\perp}$, so $\textup{dist}(0,S+a_{\alpha}) = |a_{\alpha}|$.
Since $0 \in \textup{spt} \, \|V\|$, we have $a_{\alpha}=0$ for some $\alpha$, so the plane $S$ itself contributes to this sum with multiplicity $q_0 \geq 1$, producing a term $q_0 \omega_m 2^m \leq (1+\tau_p) \omega_m 2^m$ for the mass of $V$; hence $q_0 = 1$.
To see that no other translate $S+a_{\alpha}$ can meet $B_{5 \theta}$, observe that
\[
|a_{\alpha}| < 5 \theta \implies (\omega_m 2^m)^{-1} \,\cH^m ( (S+a_{\alpha}) \cap B_2) = ( 1 - \tfrac{1}{4} |a_{\alpha}|^2 )^{\frac{m}{2}} \geq (1 - 10 \theta^2)^{\frac{m}{2}}.
\]
Together with the contribution of the multiplicity-one plane $S$, this would give $\|V\|(B_2) \geq (1+c_{\theta}) \omega_m 2^m$, contradicting the mass bound for $\tau_p < \frac{1}{10} (1 - 10 \theta^2)^{\frac{m}{2}}$.
Thus, 
\begin{equation}\label{eqn:v-mres-b-4-theta}
    V \mres B_{5 \theta} = |S| \mres B_{5 \theta}.
\end{equation}
After passing to a further subsequence, we obtain $y_k \to y$ for some $y \in \bar{B}_{\theta}$, and $y \in S$ due to the local Hausdorff convergence of supports and~\eqref{eqn:v-mres-b-4-theta}.
Moreover, since $B_{3 \theta}(y_k) \subset B_{4 \theta}$ for every $k$,
\begin{equation}\label{eqn:dist(x,P)-to-zero}
\sup_{x \in \textup{spt} \, \|V_k \| \cap B_{3 \theta}(y_k)} \textup{dist}(x,S) \to 0,
\end{equation}
by the local Hausdorff convergence of supports in $\bar{B}_{4 \theta}$ and~\eqref{eqn:v-mres-b-4-theta}.
We now verify the three conclusions of the Lemma.
First, since $y_k \in \textup{spt} \, \|V_k\|$, ~\eqref{eqn:dist(x,P)-to-zero} also gives $\textup{dist}(y_k,S) \to 0$.
Together with $S'_k \to S$, for $x \in \textup{spt} \, \|V_k \| \cap B_{3 \theta}(y_k)$, we have
\begin{align*}
    \textup{dist}(x, y_k + S'_k) &= |(I-S'_k)(x -y_k)| \leq |(I-S)(x-y_k)| + \|S'_k - S \| \, |x-y_k| \\
    &\leq \textup{dist}(x,S) + \textup{dist}(y_k,S) + 3 \theta \, \|S'_k - S \|.
\end{align*}
Therefore, $\sup_{\textup{spt} \, \|V_k \| \cap B_{3 \theta}(y_k)} \textup{dist}(x, y_k + S'_k) \to 0$.
Likewise, using~\eqref{eqn:dist-x-Ak},
\[
\textup{dist}(x, A_k) = |(I-S'_k)(x-a_k)| \leq \textup{dist}(x, S) + |x| \, \| S'_k - S \| + |a_k| \to 0
\]
by using $x \in B_{4 \theta}$, ~\eqref{eqn:convergence-of-planes}, ~\eqref{eqn:dist-x-Ak}, and~\eqref{eqn:dist(x,P)-to-zero}.
Therefore, $\sup_{\textup{spt} \, \|V_k \| \cap B_{3 \theta} (y_k)}\textup{dist}(x, A_k) \to 0$.
Since $\theta>0$ is fixed, this property, together with~\eqref{eqn:dist(x,P)-to-zero}, implies the conclusion claim~\eqref{eqn:plane-and-affine-plane-converge}.

Next, we have $V_k \rightharpoonup |S|$ in $B_{5 \theta}$ and $\|S\|(\partial B_{\rho}(y)) = 0$ for $\rho \leq 3 \theta$.
Since $y_k \to y$, for every sufficiently small $\delta>0$ and all large $k$, we use the containment $B_{j \theta-\delta}(y) \subset B_{j \theta}(y_k) \subset B_{j \theta+\delta}(y)$ and weak convergence to obtain
\[
\omega_m (j \theta-\delta)^m \leq \liminf_{k \to \infty} \|V_k \|(B_{j \theta}(y_k)), \qquad \limsup_{k \to \infty} \|V_k \| (B_{j \theta}(y_k)) \leq \omega_m (j \theta+\delta)^m
\]
for a.e.~ $\delta>0$.
Sending $\delta \downarrow 0$, we deduce that $\|V_k \|(B_{j \theta}(y_k)) \to \omega_m (j \theta)^m$ for $j \in \{1,2,3\}$.

Finally, on the rotating planes $S'_k$, write $C_k := B_{3 \theta}(y_k) \cap \{ x : |\pi_{S'_k}(x-y_k)| < 2 \theta \}$.
The functions $x \mapsto |x-y_k|$ and $x \mapsto |\pi_{S'_k}(x-y_k)|$ converge uniformly on compact subsets of $B_{5 \theta}$ to $x \mapsto |x-y|$ and $x \mapsto |\pi_S(x-y)|$.
Since $y \in S$, we have $|\pi_S(x-y)| =|x-y|$ on $S$, thus
\[
S \cap B_{3 \theta}(y) \cap \{ |\pi_S(x-y)| < 2 \theta \} = S \cap B_{2 \theta}(y)
\]
and this set has $m$-dimensional measure $\omega_m (2 \theta)^m$.
The boundaries defined by $\{ |x-y| = 3 \theta \} \cup \{ |\pi_S(x-y) | = 2 \theta \}$ have zero $\|S\|$-measure, hence the same inner-outer approximation argument used above shows that $\|V_k\|(C_k) \to \omega_m(2 \theta)^m$.
Therefore~\eqref{eqn:v-3-theta-r} also holds for all sufficiently large $k$.
This contradicts the choice of the sequence and completes the proof.
\end{proof}

\begin{lemma}\label{lemma:restart}
Fix some $0 < \theta < \min \{ \frac{1}{4},
\frac{\sigma}{100}\}$, let $\tau_p(m,\theta)$ be as in Lemma~\ref{lemma:persistence-of-mass-one}, and choose some $0 < \tau_0 < \min \{ \frac{1}{20}, \tau_p(m,\theta) \}$ and $0 < \eta < \frac{1}{20} \tau_0$.
There exists some $\ve>0$, depending on these choices, such that, after decreasing the smallness constants in Lemma~\ref{lemma:tilt-excess} and~\ref{lemma:persistence-of-mass-one}, the following holds.
Suppose that the hypotheses required to apply Lemma~\ref{lemma:tilt-excess} hold at a point $x \in \textup{spt} \, \|V\|$ and a scale $r$, relative to an affine plane $A$ with direction $S$, and moreover
\[
\left| \frac{\|V\| (B_{2r}(x))}{\omega_m(2r)^m} - 1 \right| \leq \tau_0.
\]
Suppose that $\tilde{A}$ is an affine plane with direction $\tilde{S}$ satisfying 
\[
r^{-1} \textup{dist}(x, \tilde{A}) + \| \tilde{S} - S \| \leq \ve \qquad \text{and} \qquad E(V, \tilde{A}, x, 3 \theta r) + \Lambda (V,x, 3 \theta r)^2 \leq \ve.
\]
Then, the hypotheses of Lemmas~\ref{lemma:tilt-excess} and~\ref{lemma:persistence-of-mass-one} hold at $x$ and at scale $\theta r$, relative to $\tilde{A}$ and $\tilde{S}$, with
\[
\left| \frac{\| V\|(B_{2 \theta r}(x))}{\omega_m (2 \theta r)^m} - 1 \right| < \eta < \tau_0.
\]
\end{lemma}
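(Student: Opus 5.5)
The hypotheses to verify at scale $\theta r$ are of three kinds: mass bounds (the upper bound $\|V\|(B_{3\theta r}(x))\le M(\theta r)^m$ and the projected mass bound \eqref{eqn:projected-mass-bound} relative to $\tilde S$), smallness conditions ($(\theta r)^{-1}\textup{dist}(x,\tilde A)\le\ve_0$ and $E+\Lambda^2\le\ve_0$ at scale $3\theta r$), and the density and tilt hypotheses of Lemma~\ref{lemma:persistence-of-mass-one} at the new scale. I will obtain the mass statements from Lemma~\ref{lemma:persistence-of-mass-one} applied at the old scale $r$ with $y=x$, and the smallness statements from the assumed smallness of $\ve$.

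\textbf{Step 1: mass bounds.} The hypotheses at scale $r$ include $|\|V\|(B_{2r}(x))/\omega_m(2r)^m-1|\le\tau_0<\tau_p$, a small $\Lambda(V,x,2r)$ (controlled by $\Lambda(V,x,3r)$), and small tilt. The tilt is small because the Caccioppoli bound \eqref{eqn:tilt-from-height-bound}, derived in the proof of Lemma~\ref{lemma:tilt-excess}, gives $r^{-m}\int_{B_{2r}}\|T_xV-S\|^2\le C(E+\Lambda^2)$ on $Z_r\supset B_{2r}\cap\textup{spt}\|V\|$. Hence Lemma~\ref{lemma:persistence-of-mass-one} applies with $y=x$, the affine plane $\tilde A$, and the direction $S'=\tilde S$, once $\ve$ and the smallness constant of Lemma~\ref{lemma:tilt-excess} are below its $\ve_0(\Psi,m,N,p,\theta,\eta)$, with $\eta$ in place of $\tau$. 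Conclusion \eqref{eqn:v-3-theta-r} then gives three bounds:
\begin{itemize}
\item $|\|V\|(B_{2\theta r}(x))/\omega_m(2\theta r)^m-1|<\eta$, which is the displayed conclusion;
\item $\|V\|(B_{3\theta r}(x))\le(1+\eta)\omega_m(3\theta r)^m\le M(\theta r)^m$ with $M=\omega_m6^m$;
\item the cylinder mass $\|V\|(B_{3\theta r}(x)\cap\{|\pi_{\tilde S}(\cdot-x)|<2\theta r\})$ lies within a factor $1\pm\eta$ of $\omega_m(2\theta r)^m$.
\end{itemize}
Since $\eta<1/4$, the last bound is almost \eqref{eqn:projected-mass-bound} at scale $\theta r$, but not quite: that condition concerns $Z_{\theta r}(x,\tilde S)\cap\pi_{\tilde S}^{-1}(B_{2\theta r})$, which is a cylinder of height $\theta r$ rather than the ball $B_{3\theta r}$. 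Conclusion \eqref{eqn:plane-and-affine-plane-converge} says $\textup{spt}\|V\|\cap B_{3\theta r}(x)$ lies within $\eta\theta r$ of $x+\tilde S$. So the part of the support over $B^{\tilde S}_{2\theta r}$ near $x$ lies in both sets, and it remains to rule out mass at height close to $\theta r$ that lies outside $B_{3\theta r}$. I will handle this with the height bound of Lemma~\ref{lemma:density-and-height-bound}(ii), applied at scale $r$ relative to $A$: the support in $B_{11r/4}(x)$ is within $CE^{1/(m+2)}r\ll\theta r$ of $A$, and $A$ is close to $\tilde A$, hence to $x+\tilde S$, on $B_{3\theta r}$. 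Consequently every support point of $Z_{\theta r}$ over $B^{\tilde S}_{2\theta r}$ has norm below $\sqrt{4+\eta^2}\,\theta r<3\theta r$, so the two masses agree.

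\textbf{Step 2: smallness.} By hypothesis, $(\theta r)^{-1}\textup{dist}(x,\tilde A)\le\theta^{-1}\ve$ and $E(V,\tilde A,x,3\theta r)+\Lambda(V,x,3\theta r)^2\le\ve$. Both are at most the relevant $\ve_0$ once $\ve\le\theta\ve_0$. For Lemma~\ref{lemma:persistence-of-mass-one} at scale $\theta r$, the density hypothesis is the $\eta$ bound from Step~1. The new $\Lambda(V,x,2\theta r)$ is at most $(2/3)^{1-m/p}\Lambda(V,x,3\theta r)\le\sqrt\ve$. The tilt relative to $\tilde S$ follows from Proposition~\ref{prop:caccioppoli} on $B_{2\theta r}\subset B_{3\theta r}$, via a finite cover as in Lemma~\ref{lemma:tilt-excess} together with the mass bound $M$, giving a bound $C(\ve)$, which is small.

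\textbf{Main obstacle.} I expect the delicate point to be the ordering of constants. Lemma~\ref{lemma:persistence-of-mass-one} must be applied with $\tau=\eta$ to produce its $\ve_0$, and the $\ve$ of this lemma and the smallness constant of Lemma~\ref{lemma:tilt-excess} must then be reduced below it, with no circular dependence. This works because $\theta,\tau_0,\eta$ are fixed first, and the constants of Lemma~\ref{lemma:persistence-of-mass-one} depend only on those and on $\Psi,m,N,p$. The geometric comparison between the ball and the cylinder in Step~1 is the other place that needs care.
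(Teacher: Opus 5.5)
Your proposal is correct and follows the paper's own argument. You check the tilt hypothesis at scale $r$ via the Caccioppoli covering, apply Lemma~\ref{lemma:persistence-of-mass-one} at scale $r$ with $y=x$, comparison plane $\tilde A$, direction $\tilde S$ and $\tau=\eta$, read off the $j=2,3$ mass bounds and the cylinder bound, and then recheck the remaining hypotheses at scale $\theta r$ from the assumed smallness and a second Caccioppoli covering of $B_{2\theta r}(x)$.

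One detour is unnecessary. A point with $|\pi_{\tilde S}(y-x)|<2\theta r$ and $|\tilde S^{\perp}(y-x)|<\theta r$ automatically satisfies $|y-x|<\sqrt{5}\,\theta r<3\theta r$, so $Z_{\theta r}(x,\tilde S)\cap\pi_{\tilde S}^{-1}(B^{\tilde S}_{2\theta r}(\pi_{\tilde S}x))$ lies inside $B_{3\theta r}(x)$ without any height estimate at scale $r$. This elementary observation is how the paper handles that inclusion.
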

\begin{proof}
We first verify the hypotheses of Lemma~\ref{lemma:persistence-of-mass-one} at the scale $r$.
The assumption of near-unit mass is part of the hypotheses.
Moreover, applying Proposition~\ref{prop:caccioppoli} on a finite covering of $B_{2r}(x)$ by balls whose doubles are contained in $B_{3r}(x)$, we obtain
\begin{equation}\label{eqn:first-caccioppoli}
r^{-m} \int_{B_{2r}(x)} \| T_y V - S\|^2 \, d\|V\|(y) \leq C \, [ E(V,A,x,3r) + \Lambda (V,x,3r)^2 ].
\end{equation}
Thus, after decreasing the original smallness constant, the tilt-excess hypothesis of Lemma~\ref{lemma:persistence-of-mass-one} is satisfied.
The curvature hypothesis follows as well from the smallness assumed in Lemma~\ref{lemma:tilt-excess}, so we can apply Lemma~\ref{lemma:persistence-of-mass-one} with $y=x$ and with the comparison plane $\tilde{A}$ of direction $\tilde{S}$ to obtain
\begin{align*}
    ( \theta r)^{-1} \sup_{\textup{spt} \, \|V\| \cap B_{3 \theta r}(x)} \textup{dist}(y, \tilde{A}) + (\theta r)^{-1} \sup_{\textup{spt} \, \|V\| \cap B_{3 \theta r}(x)} \textup{dist}(y,x+\tilde{S}) &<\eta, \\
    \left| \frac{\|V\|(B_{3 \theta r}(x) \cap \{ y : |\pi_{\tilde{S}}(y-x)| < 2 \theta r \})}{\omega_m (2 \theta r)^m} - 1 \right| &< \eta, 
\end{align*}
and $\Bigl| \frac{\|V\| (B_{j \theta r}(x))}{\omega_m (j \theta r)^m} - 1 \Bigr| < \eta$ for $j=1,2,3$.
We now check the hypotheses at the new scale $\theta r$.
Since $x \in \textup{spt} \, \|V\|$, the assumed bound $E(V, \tilde{A}, x,3 \theta r) + \Lambda(V,x,3 \theta r)^2 \leq \ve$ and Lemma~\hyperref[lemma:density-and-height-bound]{\ref{lemma:density-and-height-bound}~$(ii)$}, applied with outer ball $B_{3 \theta r}(x)$, give $(3 \theta r)^{-1} \textup{dist}(x, \tilde{A}) \leq C \ve^{\frac{1}{m+2}}$.
The $j=3$ mass estimate also implies
\[
\|V\|(B_{3 \theta r}(x)) \leq (1+\eta) \omega_m (3 \theta r)^m \leq M  (\theta r)^m, \qquad \text{for } \; M = \omega_m 6^m.
\]
Next, if $\eta<1$, then on $\textup{spt} \, \|V\| \cap B_{3 \theta r}(x)$ we have $\textup{dist}(y,x+\tilde{S}) < \theta r$.
Consequently, on $\textup{spt} \, \|V\|$, 
\[
B_{3 \theta r}(x) \cap \{ y : |\pi_{\tilde{S}}(y-x)| < 2 \theta r\} = Z_{\theta r} (x, \tilde{S}) \cap \pi^{-1}_{\tilde{S}} \bigl(B^{\tilde{S}}_{2 \theta r} ( \pi_{\tilde{S}} x) \bigr).
\]
One inclusion follows from the preceding height bound $\textup{dist}(y,x+\tilde{S}) < \theta r$; for the converse,
\[
|\pi_{\tilde{S}}(y-x)| < 2 \theta r, \qquad |\tilde{S}^{\perp}(y-x)| < \theta r, \qquad \implies \qquad |y-x| < \sqrt{5} \theta r< 3 \theta r.
\]
Using $\eta < \frac{1}{20} \tau_0 < \frac{1}{400}$, we may therefore apply the estimate on the projected mass to obtain
\[
\tfrac{3}{4} \omega_m (2 \theta r)^m < \|V\| \bigl( Z_{\theta r} ( x, \tilde{S}) \cap \pi_{\tilde{S}}^{-1} ( B^{\tilde{S}}_{2 \theta r} ( \pi_{\tilde{S}} x))  \bigr) < \tfrac{5}{4} \omega_m ( 2 \theta r)^m.
\]
Together with the assumed smallness of $E(V, \tilde{A}, x,3 \theta r) + \Lambda (V,x, 3 \theta r)^2 \leq \ve$, this verifies all the hypotheses of Lemma~\ref{lemma:tilt-excess} at the scale $\theta r$.

It remains only to verify that Lemma~\ref{lemma:persistence-of-mass-one} can also be reapplied at this scale.
Since $\eta<\tau_0$, the $j=2$ estimate implies the hypothesis of near-unit mass, and $\Lambda(V,x,2 \theta r) \leq c_N \Lambda(V,x,3 \theta r)$, up to a fixed scaling factor, so this is sufficiently small.
Finally, another application of Proposition~\ref{prop:caccioppoli} as above, now on a fixed finite covering of $B_{2 \theta r}(x)$ by balls whose doubles lie in $B_{3 \theta r}(x)$, gives the analogue of~\eqref{eqn:first-caccioppoli}, with $(A,S,3r)$ replaces by $(\tilde{A}, \tilde{S}, 3 \theta r)$.
After decreasing the smallness constant once more, this is below the threshold in Lemma~\ref{lemma:persistence-of-mass-one}.
Thus, the hypotheses of Lemmas~\ref{lemma:tilt-excess} and~\ref{lemma:persistence-of-mass-one} hold at $x$ and at scale $\theta r$, relative to $\tilde{A}, \tilde{S}$, as claimed.
This completes the proof.
\end{proof}

Let us start the procedure of iterating the decay of the excess to prove Theorem~\ref{thm:allard}.
We generally follow the approach of~\cite{allard-first-variation}*{\S 8.17-8.19} as in Allard's proof of the Basic Regularity Lemma in~\cite{allard-regularity}*{\S 3.6}, but using the strategy of~\cite{derosa-tione-regularity}*{\S 4} towards a more direct proof.

\begin{proposition}\label{prop:initiate-the-procedure}
Let $C_0$ denote the constant in Proposition~\ref{prop:one-step-iteration}.
We choose $0 < \theta < \min \{ \frac{1}{4}, \frac{\sigma}{100}\}$ so small that $\max \{ C_0 \theta^2 , \theta^{1 - \frac{m}{p}} \} \leq \frac{1}{8}$ and set $K = 8 \theta^{-(m+2)}$.
After decreasing the initial smallness constant, suppose that the hypotheses of Lemmas~\ref{lemma:tilt-excess} and~\ref{lemma:persistence-of-mass-one} hold at $x_0$ and at scale $r_0$, relative to an affine plane $A_0$ with direction $S_0$, with the corresponding constants taken sufficiently small and $E_0 + K \Lambda_0 \leq \ve_0$.
Then, there are affine $m$-planes $A_k$ with directions $S_k$ such that, writing
\[
E_k := E(V, A_k, x_0, 3 r_k), \qquad \Lambda_k := \Lambda(V,x_0,3r_k), \qquad \text{for } \; r_k := \theta^k r_0,
\]
and defining $0 < \alpha < \frac{1}{2} (1- \frac{m}{p})$ by $\theta^{2 \alpha} = \frac{1}{4}$, it holds that 
\begin{equation}\label{eqn:iteration-step}
    E_k + K \Lambda_k \leq \theta^{2 \alpha k} (E_0 + K \Lambda_0), \qquad \| S_{k+1} - S_k \| \leq C E_k^{\frac{1}{2}},
\end{equation}
and all the hypotheses of Lemmas~\ref{lemma:tilt-excess} and~\ref{lemma:persistence-of-mass-one} hold at every scale $r_k$.
Moreover, the affine planes $A_k$ converge as $k \to \infty$ to the affine plane $x_0 + S^{(x_0)}$, for some $S^{(x_0)} \in \bG(N,m)$, with
\begin{equation}\label{eqn:Px0-Pk}
    \| S_k - S^{(x_0)} \| \leq C (E_0 + K \Lambda_0)^{\frac{1}{2}} \theta^{\alpha k}.
\end{equation}
\end{proposition}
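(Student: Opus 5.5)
I would argue by induction on $k$, carrying the hypothesis that at scale $r_k$ the hypotheses of Lemmas~\ref{lemma:tilt-excess} and~\ref{lemma:persistence-of-mass-one} hold relative to $A_k$, that $E_k+K\Lambda_k\le\theta^{2\alpha k}(E_0+K\Lambda_0)$, and that $\|S_k-S_0\|+r_k^{-1}\textup{dist}(x_0,A_k)$ stays below a fixed small threshold. The curvature term is handled by scaling alone. Since $r_{k+1}=\theta r_k$,
\[
\Lambda_{k+1}=(3\theta r_k)^{1-\frac mp}\|H_\Psi\|_{L^p(B_{3r_{k+1}})}\le\theta^{1-\frac mp}\Lambda_k\le\tfrac18\Lambda_k .
\]

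For the excess step I split into two cases. If $\Lambda_k\le E_k$, Proposition~\ref{prop:one-step-iteration} with $\tau=\theta$ (admissible since $\theta<\sigma/100$) gives $A_{k+1}$ with $\|S_{k+1}-S_k\|\le C E_k^{1/2}$ and
\[
r_{k+1}^{-1}\textup{dist}(x_0,A_{k+1})\le\theta^{-1}\bigl(r_k^{-1}\textup{dist}(x_0,A_k)+CE_k^{1/2}\bigr),
\]
together with $E_{k+1}\le C_0\theta^2E_k\le\frac18E_k$. If instead $E_k<\Lambda_k$, I take $A_{k+1}=A_k$ and use only the trivial inclusion bound
\[
E(V,A_k,x_0,3r_{k+1})\le\theta^{-(m+2)}E_k<\theta^{-(m+2)}\Lambda_k=\tfrac K8\Lambda_k .
\]
In either case $E_{k+1}+K\Lambda_{k+1}\le\frac18E_k+\frac K8\Lambda_k+\frac K8\Lambda_k\le\frac14(E_k+K\Lambda_k)$. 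This is exactly where the choice $K=8\theta^{-(m+2)}$ and $\theta^{2\alpha}=\frac14$ enter. The resulting constraint $\alpha<\frac12(1-\frac mp)$ comes from $\theta^{1-m/p}\le\frac18<\frac14$.

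To close the induction I invoke Lemma~\ref{lemma:restart} with $(A,S,\tilde A,\tilde S)=(A_k,S_k,A_{k+1},S_{k+1})$ and $r=r_k$. This requires $r_k^{-1}\textup{dist}(x_0,A_{k+1})+\|S_{k+1}-S_k\|\le\ve$ and $E_{k+1}+\Lambda_{k+1}^2\le\ve$. Both follow from $E_k\le\ve_0$ and the decay, after shrinking $\ve_0$. Note that $\Lambda_{k+1}^2\le\Lambda_{k+1}\le\ve_0/K$, and that $r_k^{-1}\textup{dist}(x_0,A_k)$ is itself controlled by $CE_k^{1/(m+2)}$ through Lemma~\ref{lemma:density-and-height-bound}~(ii), since $x_0\in\textup{spt}\|V\|$. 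The near-unit mass condition at scale $r_{k+1}$ is delivered by Lemma~\ref{lemma:restart} with $\eta<\tau_0$.

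The main obstacle I expect is preventing the constants from drifting. Lemma~\ref{lemma:restart} returns the mass closeness $\eta$ afresh at every scale, so there is no accumulation there. The directions do accumulate, but the geometric sum gives
\[
\|S_k-S_0\|\le C\sum_j\theta^{\alpha j}(E_0+K\Lambda_0)^{1/2},
\]
which stays small. Finally, $\sum_{j\ge k}\|S_{j+1}-S_j\|\le C(E_0+K\Lambda_0)^{1/2}\theta^{\alpha k}$ shows that $(S_k)$ is Cauchy and yields \eqref{eqn:Px0-Pk}. The bound $\textup{dist}(x_0,A_k)\le Cr_kE_k^{1/(m+2)}\to0$ shows that the limit affine plane passes through $x_0$, i.e.\ $A_k\to x_0+S^{(x_0)}$ locally.
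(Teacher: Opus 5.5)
Your proposal is correct and follows the paper's proof essentially step for step. It uses the same dichotomy: if $\Lambda_k\le E_k$, apply Proposition~\ref{prop:one-step-iteration} with $\tau=\theta$; if $E_k<\Lambda_k$, keep $A_{k+1}=A_k$ and use the $\theta^{-(m+2)}$ inclusion bound. It also uses Lemma~\ref{lemma:density-and-height-bound}~(ii) and Lemma~\ref{lemma:restart} to propagate the hypotheses to scale $r_{k+1}$, and the same geometric summation to obtain \eqref{eqn:Px0-Pk}. Your extra bookkeeping of $\|S_k-S_0\|$ is harmless but not needed, since Lemma~\ref{lemma:restart} only requires consecutive directions to be close.
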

\begin{proof}
Having chosen $\alpha>0$ by $\theta^{2 \alpha} = \frac{1}{4}$, we can prove~\eqref{eqn:iteration-step} in the form $E_k + K \Lambda_k \leq 4^{-k}(E_0 + K \Lambda_0)$.
Because we also imposed $\theta^{1- \frac{m}{p}} \leq \frac{1}{8}$, we can arrange $\alpha < \frac{1}{2} ( 1- \frac{m}{p})$ after decreasing $\theta$.
We prove the result by induction; it suffices to address the inductive step, supposing that $A_k$ has been constructed and all the required hypotheses hold at scale $r_k$.
Note that
\[
\Lambda_{k+1} = (3 r_{k+1})^{1 - \frac{m}{p}} \|H_{\Psi} \|_{L^p(B_{3 r_{k+1}}(x_0))} \leq \theta^{1 - \frac{m}{p}} \Lambda_k.
\]
We distinguish two cases.
If $\Lambda_k \leq E_k$, we can apply Proposition~\ref{prop:one-step-iteration} with $\tau =\theta$ to obtain an affine plane $A_{k+1}$ with direction $S_{k+1}$, such that
\[
E_{k+1} \leq C_0 \theta^2 \, E_k \qquad \text{and} \qquad \| S_{k+1} - S_k \| \leq C E_k^{\frac{1}{2}}.
\]
Consequently,
\begin{align*}
    E_{k+1} + K \, \Lambda_{k+1} \leq C_0 \theta^2 E_k + K \theta^{1 - \frac{m}{p}} \Lambda_k \leq \tfrac{1}{8} E_k + \tfrac{1}{8} K \Lambda_k \leq \tfrac{1}{4} (E_k + K \Lambda_k).
\end{align*}
This proves~\eqref{eqn:iteration-step} in the first case.
If instead $E_k < \Lambda_k$, we preserve the affine plane $A_{k+1} = A_k$ with $S_{k+1} = S_k$, hence the scaling estimate gives
\begin{align*}
    E_{k+1} = (3 \theta r_k)^{- m-2} \int_{B_{3 \theta r_k}(x_0)} \textup{dist}(y, A_k)^2 \, d \|V\|(y) \leq \theta^{-(m+2)} E_k < \theta^{-(m+2)} \Lambda_k.
\end{align*}
Thus, we may again bound
\[
E_{k+1} + K \Lambda_{k+1} \leq \theta^{-(m+2)} \Lambda_k + K \theta^{1 - \frac{m}{p}} \Lambda_k = \bigl( \tfrac{1}{8} + \theta^{1 - \frac{m}{p}} \bigr) K \Lambda_k \leq \tfrac{1}{4} K \Lambda_k
\]
which again implies~\eqref{eqn:iteration-step}.
It remains to verify that this construction can indeed be repeated.
Since $x_0 \in \textup{spt} \, \|V\|$, Lemma~\hyperref[lemma:density-and-height-bound]{\ref{lemma:density-and-height-bound}~$(ii)$} combined with Proposition~\ref{prop:one-step-iteration} gives
\[
r_k^{-1} \textup{dist} (x_0, A_{k+1}) + \| S_{k+1} - S_k \| \leq C E_k^{\frac{1}{m+2}} + C E_k^{\frac{1}{2}}
\]
in the first case.
In the second case, the bound on $r_k^{-1} \textup{dist}(x_0, A_{k+1}) + \| S_{k+1} - S_k \|$ is immediate because $A_{k+1} = A_k$.
The property~\eqref{eqn:iteration-step} implies that the right-hand side is uniformly as small as desired once $E_0 + K \Lambda_0 \leq \ve_0$ sufficiently small.
We may therefore apply Lemma~\ref{lemma:restart} with $\tilde{A} = A_{k+1}$, which shows that all the requirements on the mass, projected mass, height, and affine position to apply Lemmas~\ref{lemma:tilt-excess} and~\ref{lemma:persistence-of-mass-one} at scale $r_{k+1}$.
Moreover, the bound~\eqref{eqn:iteration-step} guarantees that the excess and the curvature remain small.
This completes the inductive step.

Finally, the bound~\eqref{eqn:iteration-step} implies that $E_k \leq \theta^{2 \alpha k}(E_0 + K \Lambda_0)$ and
\[
\sum_{k=0}^{\infty} \| S_{k+1} - S_k \| \leq C (E_0 + K \Lambda_0)^{\frac{1}{2}} \sum_{k=0}^{\infty} \theta^{\alpha k} < \infty.
\]
Thus, the sequence $\{ S_k \}$ converges to an $m$-plane $S^{(x_0)}$ with $\| S_k - S^{(x_0)} \| \leq C (E_0 + K \Lambda_0)^{\frac{1}{2}} \theta^{\alpha k}$.
This proves the inequality~\eqref{eqn:Px0-Pk}.
We also obtain
\[
r_k^{-1} \textup{dist}(x_0, A_k) \leq C E_k^{\frac{1}{m+2}} \leq C (E_0 + K \Lambda_0)^{\frac{1}{m+2}} \theta^{\frac{2 \alpha}{m+2} k } \to 0.
\]
Thus, the limiting plane is $x_0 + S^{(x_0)}$ and satisfies~\eqref{eqn:Px0-Pk}, as claimed.
\end{proof}

\begin{corollary}\label{cor:s-holder}
In the notation of Proposition~\ref{prop:initiate-the-procedure}, for every $s \in (0,r_0]$ we have
\begin{align*}
s^{-m} \int_{B_s(x_0)} \| T_y V - S^{(x_0)} \|^2 \, d \|V\|(y) &\leq C (E_0 + K \Lambda_0) \Bigl( \frac{s}{r_0} \Bigr)^{2 \alpha}, \\
s^{-1} \sup_{\textup{spt} \, \|V\| \cap B_s(x_0)} \textup{dist}(y, x_0 + S^{(x_0)} ) &\leq C (E_0 + K \Lambda_0)^{\frac{1}{m+2}} \Bigl( \frac{s}{r_0} \Bigr)^{\frac{2 \alpha}{m+2}}.
\end{align*}
\end{corollary}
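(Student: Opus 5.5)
Both estimates come from interpolating between the dyadic-type scales $r_k = \theta^k r_0$ of Proposition~\ref{prop:initiate-the-procedure}. Given $s \in (0,r_0]$, I will choose $k \geq 0$ with $r_{k+1} < s \leq r_k$, so that $s/r_0 > \theta^{k+1}$ and $\theta^{2\alpha k} \leq \theta^{-2\alpha}(s/r_0)^{2\alpha}$. Since $\theta$ is fixed, every factor $\theta^{-m}$ or $\theta^{-2\alpha}$ lost in passing from $s$ to $r_k$ is absorbed into $C$.

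\emph{Tilt estimate.} At scale $r_k$ all the hypotheses of Lemma~\ref{lemma:tilt-excess} hold, in particular $\|V\|(B_{3r_k}(x_0)) \leq M r_k^m$. I will apply Proposition~\ref{prop:caccioppoli} on a fixed finite cover of $B_{r_k}(x_0)$ by balls whose doubles lie in $B_{3r_k}(x_0)$, with comparison plane $A_k$, exactly as in~\eqref{eqn:first-caccioppoli}. This gives
\[
r_k^{-m}\int_{B_{r_k}(x_0)} \|T_yV - S_k\|^2 \, d\|V\| \leq C(E_k+\Lambda_k^2).
\]
Since $\Lambda_k$ is small, $\Lambda_k^2 \leq \Lambda_k \leq K\Lambda_k$, and~\eqref{eqn:iteration-step} bounds the right-hand side by $C\theta^{2\alpha k}(E_0+K\Lambda_0)$.

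Next I will replace $S_k$ by $S^{(x_0)}$. By $\|T-S^{(x_0)}\|^2 \leq 2\|T-S_k\|^2 + 2\|S_k-S^{(x_0)}\|^2$, the mass bound $\|V\|(B_{r_k}) \leq M r_k^m$, and~\eqref{eqn:Px0-Pk}, the extra term is at most $C(E_0+K\Lambda_0)\theta^{2\alpha k}$. Finally, $B_s \subset B_{r_k}$ and $s^{-m} \leq \theta^{-m} r_k^{-m}$ give the first inequality.

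\emph{Height estimate.} I will apply Lemma~\ref{lemma:density-and-height-bound}(ii) at scale $r_k$ with outer ball $B_{3r_k}(x_0)$ and the plane $A_k$. This gives
\[
\sup_{\textup{spt}\,\|V\| \cap B_{r_k}(x_0)} \textup{dist}(y,A_k) \leq C r_k E_k^{\frac{1}{m+2}}.
\]
To pass to $x_0 + S^{(x_0)}$, for $y \in B_{r_k}(x_0)$ I will use
\[
\textup{dist}(y, x_0+S^{(x_0)}) \leq \textup{dist}(y,A_k) + \textup{dist}(x_0,A_k) + r_k\|S_k - S^{(x_0)}\|.
\]
The middle term is bounded by $C r_k E_k^{1/(m+2)}$, as at the end of the proof of Proposition~\ref{prop:initiate-the-procedure}. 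The last term is at most $C r_k(E_0+K\Lambda_0)^{1/2}\theta^{\alpha k}$ by~\eqref{eqn:Px0-Pk}. Because both $E_0+K\Lambda_0 \leq 1$ and $\theta^{\alpha k} \leq 1$, and $\frac{1}{m+2} \leq \frac12$, this is dominated by $C r_k (E_0+K\Lambda_0)^{1/(m+2)}\theta^{2\alpha k/(m+2)}$. Dividing by $s \geq \theta r_k$ and converting $\theta^{k}$ to $s/r_0$ gives the second inequality.

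\emph{Main obstacle.} The only delicate point is confirming that the constants are uniform in $k$. This holds because Proposition~\ref{prop:initiate-the-procedure} guarantees that the hypotheses of Lemma~\ref{lemma:tilt-excess} and Lemma~\ref{lemma:persistence-of-mass-one}, including the mass bound with fixed $M$ and the smallness of $\Lambda_k$, hold at every scale $r_k$.
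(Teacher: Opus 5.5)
Your proposal is correct and follows the paper's proof essentially step for step. The common route is: the Caccioppoli inequality at each scale $r_k$ relative to $A_k$; the parallelogram inequality together with \eqref{eqn:Px0-Pk} to replace $S_k$ by $S^{(x_0)}$; Lemma~\ref{lemma:density-and-height-bound}(ii) and the triangle inequality for the height bound; and interpolation for $s \in (r_{k+1}, r_k]$. You are in fact slightly more explicit than the paper about two points: the mass bound $\|V\|(B_{r_k}(x_0)) \leq M r_k^m$ needed for the cross term, and the exponent comparison $(E_0+K\Lambda_0)^{1/2}\theta^{\alpha k} \leq (E_0+K\Lambda_0)^{1/(m+2)}\theta^{2\alpha k/(m+2)}$.
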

\begin{proof}
At each radius $r_k$ in the induction of Proposition~\ref{prop:initiate-the-procedure}, the Caccioppoli inequality gives
\[
r_k^{-m} \int_{B_{r_k}(x_0) \times \bG(N,m)} \| T - S_k \|^2 \, d V(y,T) \leq C (E_k + \Lambda_k^2).
\]
The parallelogram equality shows that, for every plane $T$,
\begin{equation}\label{eqn:T-P(x)}
    \| T - S^{(x_0)} \|^2 \leq 2 \, \| T - S_k \|^2 + 2 \| S_k - S^{(x_0)} \|^2.
\end{equation}
Combining the above Caccioppoli inequality with the bounds~\eqref{eqn:Px0-Pk} and~\eqref{eqn:T-P(x)}, we deduce that
\[
r_k^{-m} \int_{B_{r_k}(x_0)} \|T - S^{(x_0)} \|^2 \, dV(y,T) \leq C (E_0 + K \Lambda_0) \theta^{2 \alpha k}.
\]
If $s \in (r_{k+1}, r_k]$, then $\frac{r_k}{s} \leq \theta^{-1}$, so the first claim bound follows from this inequality.
Regarding the second claimed bound, we use the height bound of Lemma~\hyperref[lemma:density-and-height-bound]{\ref{lemma:density-and-height-bound}~$(ii)$} on the affine plane $A_k$ together with the inequality $E_k \leq \theta^{2 \alpha k} (E_0 + K \Lambda_0)$ to obtain
\[
r_k^{-1} \sup_{\textup{spt} \, \|V\| \cap B_{r_k}(x_0)} \textup{dist}(y, A_k) \leq C (E_0 + K \Lambda_0)^{\frac{1}{m+2}} \theta^{\frac{2 \alpha}{m+2} k}.
\]
The same estimate applies to $\textup{dist}(x_0, A_k)$.
We combine with this bound with~\eqref{eqn:Px0-Pk} and 
\[
\textup{dist}(y, x_0 + S^{(x_0)}) \leq \textup{dist}(y, A_k) + \textup{dist}(x_0, A_k) + C r_k \, \| S_k - S^{(x_0)} \|
\]
to obtain the second claimed inequality, for $s \in (r_{k+1}, r_k]$.
The result for arbitrary radii $s \in (0,r_0]$ follows by the same inductive argument as in Proposition~\ref{prop:initiate-the-procedure}.
This completes the proof.
\end{proof}

\begin{corollary}\label{cor:recenter-uniformly}
After decreasing the initial smallness constant, there are fixed numbers $c \in (0,1)$ and $\rho \in (0,1)$ such that, for every $x \in \textup{spt} \, \|V\| \cap B_{c r_0}(x_0)$, all the hypotheses required to begin the preceding iteration hold at $x$ and scale $\rho r_0$, with constants independent of $x$.
\end{corollary}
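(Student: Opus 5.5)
The plan is to transfer every hypothesis from the base point $x_0$ at scale $r_0$ to a nearby point $x$ at the smaller scale $\rho r_0$. The tool is Lemma~\ref{lemma:persistence-of-mass-one} applied at $x_0$ with $y=x$, which is permitted for $x\in\textup{spt}\,\|V\|\cap B_{\theta r_0}(x_0)$. The hypotheses to check at $(x,\rho r_0)$ are three: the requirements of Lemma~\ref{lemma:tilt-excess} relative to a suitable affine plane $\tilde A_x$; the near-unit mass and small tilt required by Lemma~\ref{lemma:persistence-of-mass-one}; and the smallness $E+K\Lambda\le\ve_0$ from Proposition~\ref{prop:initiate-the-procedure}. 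I will take $c=\theta$ and $\rho=\theta$, with $\theta$ the fixed parameter of Lemma~\ref{lemma:restart}. The comparison plane will be $\tilde A_x:=x+S_0$.

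The first step is to apply Lemma~\ref{lemma:persistence-of-mass-one} at $(x_0,r_0)$ with $y=x$, $S'=S_0$ and $A=A_0$, using a smallness parameter $\eta$ that I am free to shrink. Its tilt hypothesis at $x_0$ holds by the Caccioppoli estimate~\eqref{eqn:first-caccioppoli}, and its mass hypothesis is assumed. The conclusion~\eqref{eqn:plane-and-affine-plane-converge} gives $\textup{dist}(\cdot,x+S_0)<\eta\theta r_0$ on $\textup{spt}\,\|V\|\cap B_{3\theta r_0}(x)$. It follows that $E(V,x+S_0,x,3\theta r_0)\le C\eta^2\,\|V\|(B_{3\theta r_0}(x))(\theta r_0)^{-m}\le C\eta^2$, using the $j=3$ mass bound.

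The curvature term is controlled by inclusion of balls. Since $B_{3\theta r_0}(x)\subset B_{3r_0}(x_0)$, we get $\Lambda(V,x,3\theta r_0)\le\Lambda(V,x_0,3r_0)\le\ve_0/K$, and the factor $\theta^{1-m/p}$ that appears is at most $1$.

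The remaining hypotheses come from the other conclusions of Lemma~\ref{lemma:persistence-of-mass-one}. The projected mass bound~\eqref{eqn:projected-mass-bound} at $(x,\theta r_0)$ relative to $S_0$ follows from the first bound in~\eqref{eqn:v-3-theta-r}, together with the identification of the truncated cylinder with $Z_{\theta r_0}(x,S_0)\cap\pi_{S_0}^{-1}(B_{2\theta r_0})$ from the proof of Lemma~\ref{lemma:restart}. The mass bound $\|V\|(B_{3\theta r_0}(x))\le M(\theta r_0)^m$ and the near-unit density $|\|V\|(B_{2\theta r_0}(x))/\omega_m(2\theta r_0)^m-1|<\eta<\tau_0$ follow from the second bound in~\eqref{eqn:v-3-theta-r}. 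We also have $\textup{dist}(x,\tilde A_x)=0$. The tilt hypothesis of Lemma~\ref{lemma:persistence-of-mass-one} at $(x,\theta r_0)$ follows from Proposition~\ref{prop:caccioppoli} on a finite cover of $B_{2\theta r_0}(x)$ by balls whose doubles lie in $B_{3\theta r_0}(x)$; this gives the bound $C(\eta^2+\Lambda^2)$. Finally, $E_0(x)+K\Lambda_0(x)\le C\eta^2+\ve_0$ at the new centre, and this is below any prescribed threshold once $\eta$ and $\ve_0$ are chosen small.

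The main obstacle is the order of quantifier choices. The constants $\theta,\tau_0,K$ and the thresholds of Lemmas~\ref{lemma:tilt-excess}, \ref{lemma:persistence-of-mass-one} and Proposition~\ref{prop:initiate-the-procedure} must be fixed first. Then $\eta$ is chosen so that $C\eta^2$ lies below those thresholds. Only after that is the initial $\ve_0$ chosen, via Lemma~\ref{lemma:persistence-of-mass-one} with this $\eta$, so that the argument does not become circular. Because each bound depends only on these fixed choices and not on $x$, the resulting constants are uniform in $x\in\textup{spt}\,\|V\|\cap B_{\theta r_0}(x_0)$.
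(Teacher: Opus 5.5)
Your argument is correct, but it differs from the paper's in the choice of comparison plane and in where the excess smallness comes from.

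\textbf{The paper's route.} The paper keeps the original plane $A_0$ at every new centre. Inclusion of balls, $B_{3\rho r_0}(x)\subset B_{r_0}(x_0)$, gives
\[
E(V,A_0,x,3\rho r_0)\le C_\rho E(V,A_0,x_0,3r_0),\qquad \Lambda(V,x,3\rho r_0)\le C_\rho\Lambda(V,x_0,3r_0).
\]
The offset $(\rho r_0)^{-1}\textup{dist}(x,A_0)\le C_\rho E_0^{1/(m+2)}$ comes from Lemma~\ref{lemma:density-and-height-bound}~$(ii)$. Lemma~\ref{lemma:persistence-of-mass-one} is used only for the projected-mass and near-unit-density conditions.

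\textbf{Your route.} You take $x+S_0$, which makes the offset hypothesis trivial. You then extract the excess bound $C\eta^2$ from the compactness-based $L^\infty$ conclusion \eqref{eqn:plane-and-affine-plane-converge}. You also spell out the tilt hypothesis at the new scale via Proposition~\ref{prop:caccioppoli}, which the paper leaves implicit.

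\textbf{What your route gives up.} Your initial quantity at $x$ is bounded by the fixed threshold $C\eta^2+\ve_0$, not by $C_\rho(E_0+K\Lambda_0)$. Shifting the plane from $A_0$ to $x+S_0$ costs a term of order $(\textup{dist}(x,A_0)/r_0)^2$, which the available height bound controls only by $E_0^{2/(m+2)}$. So linearity in $\cE$ is lost.

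This is harmless for the corollary as stated. However, the proof of Theorem~\ref{thm:allard} quotes exactly the linear estimate. Step~1 initializes with $A_0^{(x)}=A_0$ and records $E(V,A_0^{(x)},x,3r_0^{(x)})+K\Lambda(V,x,3r_0^{(x)})\le C_\rho(E_0+K\Lambda_0)$. This feeds the constant $C\cE^{1/2}$ in \eqref{eqn:holder-planes} and the modulus $\omega_\star$ built in Step~5.

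With your version, you would have to let your accuracy parameter depend on $\delta(x_0,S_0,r)$ through the compactness lemma to recover a modulus. For the downstream use it is therefore simpler to keep $A_0$, at the price of one application of Lemma~\ref{lemma:density-and-height-bound}~$(ii)$ for the offset.
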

\begin{proof}
The estimates involving the height-excess and mean curvature follow from the inclusion of balls: for $c, \rho$ fixed and sufficiently small, we can arrange $B_{3 \rho r_0}(x) \subset B_{r_0}(x_0)$, and hence
\begin{align*}
E(V, A_0, x, 3 \rho r_0) &\leq C_{\rho} E(V, A_0, x_0, 3r_0), \qquad \Lambda (V,x,3 \rho r_0) \leq C_{\rho} \Lambda (V,x_0, 3r_0), \\
(\rho r_0)^{-1} \textup{dist}(x, A_0) &\leq C_{\rho} E(V, A_0, x_0, 3r_0)^{\frac{1}{m+2}}
\end{align*}
using the height bound of Lemma~\hyperref[lemma:density-and-height-bound]{\ref{lemma:density-and-height-bound}~$(ii)$} at the original scale.
Moreover, the projected mass property~\eqref{eqn:projected-mass-bound} and multiplicity-one conditions of Lemma~\ref{lemma:persistence-of-mass-one} hold uniformly by Lemma~\ref{lemma:persistence-of-mass-one}. 
\end{proof}

\begin{proof}[Proof of Theorem~\ref{thm:allard}]
We complete the proof in five steps.

\smallskip \noindent \textbf{Step 1: Initialization.}
First, we fix the constants $\theta,\tau$ of Lemma~\ref{lemma:persistence-of-mass-one} and Proposition~\ref{prop:initiate-the-procedure}, with $\tau>0$ sufficiently small.
We first apply Lemma~\ref{lemma:persistence-of-mass-one} at $x_0$ and scale $r$, with $(y,A,S')=(x_0, x_0+S_0,S_0)$.
Setting $(r_0,A_0) := (\theta r, x_0 +S_0)$, we obtain the requirements of~\eqref{eqn:projected-mass-bound} and Lemma~\ref{lemma:persistence-of-mass-one} at scale $r_0$, together with $r_0^{-1} \sup_{\textup{spt} \, \|V\| \cap B_{3r_0}(x_0)} \textup{dist}(x, A_0)< \tau$.
Using the corresponding mass upper bound and the Caccioppoli inequality with the same finite covering argument as above, we find
\begin{align*}
    E(V, A_0, x_0, 3 r_0) \leq C \tau^2, \qquad \Lambda (V, x_0, 3r_0) \leq C \, \Lambda (V, x_0, 2r), \\
    r_0^{-m} \int_{B_{2r_0}(x_0)} \|T-S_0\|^2 \, d V \leq C \, [ E(V, A_0, x_0, 3r_0) + \Lambda (V, x_0, 3r_0)^2].
\end{align*}
Thus, by first choosing $\tau$ sufficiently small and then decreasing the smallness constant in the theorem statement, the hypotheses of Lemmas~\ref{lemma:tilt-excess} and~\ref{lemma:persistence-of-mass-one} hold at $x_0$ and scale $r_0$, relative to $A_0$ with direction $S_0$, and we can initiate the procedure of Proposition~\ref{prop:initiate-the-procedure}. 
We record the dependence of the resulting estimates on the original smallness parameter $\delta(x_0,S_0,r)$ at the end of the proof.

By Corollary~\ref{cor:recenter-uniformly}, after decreasing the initial smallness constant and shrinking the interior ball, there are fixed $c,\rho>0$ such that, for every $x \in \tilde{G} := \textup{spt} \, \|V\| \cap B_{2 c r_0}(x_0)$, the hypotheses of Lemmas~\ref{lemma:tilt-excess} and~\ref{lemma:persistence-of-mass-one} hold at $x$ and at scale $\rho r_0$, uniformly in $x$, relative to the initial affine plane $A_0$ with direction $S_0$.
We may therefore initialize the iteration of Proposition~\ref{prop:initiate-the-procedure} by setting $(r_0^{(x)}, A_0^{(x)}, S_0^{(x)}) := (\rho r_0, A_0, S_0)$.
Moreover, by the estimates of Corollary~\ref{cor:recenter-uniformly}, we have
\[
E(V, A^{(x)}_0, x, 3 r_0^{(x)}) +K \, \Lambda (V, x, 3r_0^{(x)}) \leq C_{\rho} (E_0 + K \, \Lambda_0).
\]
Thus, after decreasing the original smallness constant once more, the hypotheses of Proposition~\ref{prop:initiate-the-procedure} and Corollary~\ref{cor:recenter-uniformly} are satisfied uniformly for every $x \in \tilde{G}$.

\smallskip \noindent \textbf{Step 2: The plane field map $x \mapsto S^{(x)}$ is $C^{0,\alpha}$.}
For every $x\in \tilde{G}$, Proposition~\ref{prop:initiate-the-procedure} produces an $m$-plane $S^{(x)}$ and uniform constants such that, with $\cE := E_0 + K \Lambda_0$,
\begin{align}
    s^{-m} \int_{B_s(x) \times \bG(N,m)} \| T-S^{(x)} \|^2 \, d V(y,T) &\leq C \cE \Bigl( \frac{s}{r_0} \Bigr)^{2 \alpha},  \label{eqn:tilt-excess-uniform} \\
    s^{-1} \sup_{\textup{spt} \, \|V\| \cap B_s(x)} \textup{dist}(y, x+ S^{(x)}) &\leq C \cE^{\frac{1}{m+2}} \Bigl( \frac{s}{r_0} \Bigr)^{\frac{2 \alpha}{m+2}}. \label{eqn:distance-uniform}
\end{align}
Now, the map $x \mapsto S^{(x)}$ is $C^{0,\alpha}$: after shrinking the interior ball if necessary, we claim that
\begin{equation}\label{eqn:holder-planes}
    \| S^{(x)} - S^{(y)} \| \leq C \cE^{\frac{1}{2}} r_0^{-\alpha} |x-y|^{\alpha}, \qquad \forall \; x,y \in \tilde{G}.
\end{equation}
Indeed, the result is clear if $d = |x-y| \geq c r_0$, due to the uniform bound $\|S^{(x)} - S_0 \| + \| S^{(y)} - S_0 \| \leq C \cE^{\frac{1}{2}}$.
If $d< cr_0$, then using $B_{d/4}(y) \subset B_{2d}(x)$, the lower density bound of Lemma~\hyperref[lemma:density-and-height-bound]{\ref{lemma:density-and-height-bound}~$(i)$} shows that $\|V\|(B_{d/4}(y)) \geq c'd^m$.
Using the parallelogram inequality as in~\eqref{eqn:T-P(x)}, we can bound $ \frac{1}{2} \| S^{(x)} - S^{(y)}\|^2 \leq  \| T - S^{(x)} \|^2 +  \| T - S^{(y)}\|^2$.
Integrating on $B_{d/4}(y)$ then yields
\[
c' d^m \| S^{(x)} - S^{(y)} \|^2 \leq \int_{B_{d/4}(y)} \|T_z V-S^{(x)}\|^2 \, d \|V\|(z) + \int_{B_{d/4}(y)} \| T_z V - S^{(y)}\|^2 \, d\|V\|(z).
\]
The first integral is bounded using~\eqref{eqn:tilt-excess-uniform} and $B_{d/4}(y) \subset B_{2d}(x)$; the second integral is bounded using~\eqref{eqn:tilt-excess-uniform} on $B_{d/4}(y)$.
Therefore, $d^m \| S^{(x)} - S^{(y)} \|^2 \leq C \cE d^m ( \frac{d}{r_0})^{2 \alpha}$.
Cancelling $d^m$ and taking square roots proves the assertion~\eqref{eqn:holder-planes}.

\smallskip \noindent \textbf{Step 3: Graphicality.}
Next, after shrinking the constant $c$ further, we can arrange that $\sup_{x \in \tilde{G} } \| S^{(x)} - S_0 \| < \frac{1}{20}$, with $S_0$ the original reference direction, and that the right-hand side of~\eqref{eqn:distance-uniform} is $< \frac{1}{40}$ at each relevant scale.
For $x,y \in \tilde{G}$, we apply~\eqref{eqn:distance-uniform} with radius $2 |x-y|$ to find
\[
\textup{dist}(y, x +S^{(x)}) = |( I - S^{(x)}) (y-x)| \leq \tfrac{1}{20} |y-x|.
\]
We therefore obtain $|\pi_{S_0}(y-x)| \geq \frac{1}{2} |y-x|$, due to
\[
|S_0^{\perp}(y-x)| \leq |(I - S^{(x)})(y-x)| + \| S^{(x)} - S_0 \| \, |y-x| \leq \tfrac{1}{10} |y-x|.
\]
Thus, $\pi_{S_0}$ is injective on $\tilde{G}$ with Lipschitz inverse.
Having established all estimates in $\tilde{G} = \textup{spt} \, \|V\| \cap B_{2 cr_0}(x_0)$, we deduce that $G := \textup{spt} \, \|V\| \cap \bar{B}_{c r_0}(x_0)$ is a graph over $D = \pi_{S_0}(G)$.

\smallskip \noindent \textbf{Step 4: $D$ contains a ball.}\label{step-4-in-Allard}
We now prove that the domain $D$ contains a ball of fixed size: after decreasing the initial smallness constant, there exists a $\gamma_0 = \gamma_0(\Psi,N, m,p)>0$ such that $B^{S_0}_{\gamma_0 r_0}(\pi_{S_0}x_0)\subset D$.
Indeed, denote by $C_{\mathrm{bi}}$ the uniform Lipschitz constant of the inverse of $\pi_{S_0} : \tilde{G} \to \pi_{S_0}(\tilde{G})$ obtained in Step 3, and recall that every recentered iteration starts at
the scale $\rho r_0$, for $\rho>0$ the constant from Corollary~\ref{cor:recenter-uniformly}. 
Choose $\gamma_0>0$ so small that
\begin{equation}\label{eqn:choice-of-gamma-zero}
\gamma_0\leq\tfrac{\sigma\rho}{16}
\qquad\text{and}\qquad
2C_{\mathrm{bi}}\gamma_0+
\tfrac{32\gamma_0}{\sigma\theta}<c.
\end{equation}
Suppose, towards a contradiction, that there exists a $z\in B^{S_0}_{\gamma_0r_0}(\pi_{S_0}x_0)\setminus D$.
Since $D$ is compact, we may choose $w\in D$ such that $|z-w|=\textup{dist}(z,D)=:\delta>0$, and let $x\in G$ be the unique point satisfying $\pi_{S_0}x=w$.
Since $\pi_{S_0}x_0\in D$, we have $\delta\leq |z-\pi_{S_0}x_0|<\gamma_0r_0$, and hence
\[ 
|w-\pi_{S_0}x_0| \leq |w-z|+|z-\pi_{S_0}x_0| <2\gamma_0r_0.
\]
The bi-Lipschitz estimate from Step~3 therefore gives $|x-x_0| \leq C_{\textup{bi}} |w - \pi_{S_0} x_0| < 2 \, C_{\textup{bi}} \gamma_0 r_0$.

We now consider the sequence of scales $r_k^{(x)}:=\theta^k\rho r_0$ of the iteration centered at $x$. 
By the first inequality in~\eqref{eqn:choice-of-gamma-zero}, there is
a $k\geq0$ such that, writing $r_k:=r_k^{(x)}$, we have $\frac{\sigma\theta}{16}r_k < \delta \leq\frac{\sigma}{16}r_k$; this is always possible because successive radii differ by the fixed factor $\theta$.
In particular, $r_k < \frac{16}{\sigma \theta} \delta \leq \frac{16 \gamma_0}{\sigma \theta} r_0$.
Combined with the bound $|x-x_0| < 2 C_{\textup{bi}} \gamma_0 r_0$, this implies that $B_{2r_k}(x) \subset B_{cr_0}(x_0)$, so every point of $\textup{spt} \, \|V\| \cap B_{2r_k}(x)$ belongs to $G = \textup{spt} \, \|V\| \cap \bar{B}_{cr_0}(x_0)$.

Let $A_k=A_k^{(x)}$ and $S_k=S_k^{(x)}$ be the affine plane and its direction produced by the iteration at the point $x$ and the scale $r_k$. 
By~\eqref{eqn:Px0-Pk}, the estimate of Step~2, and the fact that every recentered iteration starts with direction $S_0$, we may decrease the initial smallness constant so that $\|S_k - S_0 \| \leq \frac{1}{10}$ for every relevant center $x$ and every $k\geq0$.
Hence, the linear map $\pi_{S_0}|_{S_k} : S_k \to S_0$ has a $2$-Lipschitz inverse.
For $c_* := \frac{\sigma \theta}{64}$, the estimates of Proposition~\ref{prop:initiate-the-procedure} and Corollaries~\ref{cor:s-holder} and~\ref{cor:recenter-uniformly} allow us to decrease the original smallness constant so that, uniformly in $x$ and $k$,
\begin{equation}\label{eqn:small-holes}
    r_k^{-1} \textup{dist}(x, A_k) + C E^{\frac{1}{m+2}}_k + [ 2 \omega_m^{-1} C(E_k+ \Lambda_k^2)]^{\frac{1}{m}} \leq c_*,
\end{equation}
where $E_k:=E(V,A_k,x,3r_k)$, $\Lambda_k:=\Lambda(V,x,3r_k)$, and $C$ is chosen larger than the constants in
Lemma~\ref{lemma:tilt-excess}.
We apply Lemma~\ref{lemma:tilt-excess} at $x$ and scale $r_k$, relative to $A_k$ and $S_k$. 
In coordinates centered at $x$ and adapted to $S_k$, this produces a Borel set $K_k\subset B^{S_k}_{\sigma r_k}$ and a Lipschitz map $f_k:B^{S_k}_{\sigma r_k}\to S_k^\perp$ whose graph agrees with $V$ over $K_k$. Writing $A_k-x=a_k+S_k$ with $a_k \in S^{\perp}_k$, we have $|a_k|=\textup{dist}(x,A_k)$.
The height estimate in Lemma~\ref{lemma:tilt-excess}, together with~\eqref{eqn:small-holes}, gives
\begin{equation}\label{eqn:fk-small-hole} 
\|f_k\|_{L^\infty(B^{S_k}_{\sigma r_k})} \leq |a_k|+\|f_k-a_k\|_{L^\infty(B^{S_k}_{\sigma r_k})} \leq c_\ast r_k.
\end{equation}
We define the point $\hat{z} := ( \pi_{S_0}|_{S_k})^{-1} (z - \pi_{S_0} x) \in S_k$.
Since $\pi_{S_0}x=w$, the choice of scale and the $2$-Lipschitz bound imply $|\widehat z| \leq2|z-w| = 2\delta \leq\frac{\sigma}{8}r_k$.
Therefore, $B^{S_k}_{c_\ast r_k}(\widehat z) \subset B^{S_k}_{\sigma r_k}$, and we claim that
\begin{equation}\label{eqn:hole-contained-in-bad-set}
B^{S_k}_{c_\ast r_k}(\widehat z)
\subset B^{S_k}_{\sigma r_k}\setminus K_k.
\end{equation}
Indeed, given $u\in K_k\cap B^{S_k}_{c_\ast r_k}(\widehat z)$, the corresponding graphical point $y := x + u + f_k(u)$ belongs to $\textup{spt}\|V\|\cap B_{2r_k}(x) \subset G$, so $\pi_{S_0}y\in D$. 
On the other hand, the definition of
$\widehat z$ and~\eqref{eqn:fk-small-hole} imply
\[
|\pi_{S_0}y-z| = \left| \pi_{S_0}|_{S_k}(u-\widehat{z}) + \pi_{S_0} f_k(u) \right| \leq |u-\widehat z|+|f_k(u)|< 2c_\ast r_k = \tfrac{\sigma \theta}{32} r_k < \delta
\]
due to the choice of $\delta$.
This contradicts $\textup{dist}(z,D)=\delta$ and proves
\eqref{eqn:hole-contained-in-bad-set}.
Now,~\eqref{eqn:hole-contained-in-bad-set} implies $\cL^m(B^{S_k}_{\sigma r_k}\setminus K_k) \geq \omega_m c_\ast^m r_k^m$, while Lemma~\ref{lemma:tilt-excess} combined with~\eqref{eqn:small-holes} gives
\[
\cL^m(B^{S_k}_{\sigma r_k}\setminus K_k) \leq Cr_k^m(E_k+\Lambda_k^2) \leq \tfrac{1}{2}\omega_m c_\ast^m r_k^m,
\]
which is a contradiction. 
Thus, no such point $z$ exists, and $B^{S_0}_{\gamma_0 r_0} (\pi_{S_0} x_0) \subset D$ follows.

\smallskip \noindent \textbf{Step 5: Conclusion of the proof.}
Choose a universal constant $\gamma_\star>0$ satisfying $\gamma_{\star} \leq \frac{\theta}{2} \min \{ c, \gamma_0\}$.
Since $r_0 = \theta r$, we have $B_{\gamma_{\star} r}(x_0) \subset B_{cr_0}(x_0)$ and
\[
B^{S_0}_{\gamma_\star r}(\pi_{S_0}x_0) \subset B^{S_0}_{\gamma_0r_0}(\pi_{S_0}x_0) \subset D.
\]
Hence, $G$ is the graph of a Lipschitz map $f$ over
$B^{S_0}_{\gamma_\star r}(\pi_{S_0}x_0)$, with one-to-one projection onto $S_0$. 
After translating $x_0$ to the origin and rotating $S_0$ onto $\bR^m\times\{0\}$, we can view $f$ as a map $f: B^m_{\gamma_{\star} r} \to \bR^{N-m}$.
For $\|V\|$-a.e.~ $x$, integrality gives $\Theta^m (\|V\|,x) \in \bN$, while iterating Lemma~\ref{lemma:persistence-of-mass-one} gives $\frac{3}{4} < \frac{\|V\|(B_s(x))}{\omega_m s^m} < \frac{5}{4}$ at arbitrarily small scales.
Passing to the density gives $\Theta^m (\|V\|,x) \in [ \frac{3}{4}, \frac{5}{4}]$, so $\Theta^m ( \|V\|,x) = 1$ for $\|V\|$-a.e.~ $x$ and $V = v( \textup{graph} \, f,1)$ in a fixed smaller neighborhood.

For $x = (z,f(z))$, the limiting plane $S^{(x)}$ is close to $S_0$, hence it is uniquely of the form $S^{(x)} = \textup{graph} \, L_z$, for a linear map $L_z : S_0 \to S_0^{\perp}$.
Using~\eqref{eqn:holder-planes}, we obtain $\| L_z - L_w \| \leq C \cE^{\frac{1}{2}} \bigl( \frac{|z-w|}{r_0} \bigr)^{\alpha}$.
Also, $f$ is differentiable at a.e.~ $z$, and $T_{(z,f(z))} V = \textup{graph} \, Df(z)$ because the varifold is multiplicity one.
On the other hand,~\eqref{eqn:tilt-excess-uniform} implies that the unique approximate tangent plane at this point is $S^{(z,f(z))}$, hence $Df(z) = L_z$ for a.e.~ $z$.
By~\eqref{eqn:holder-planes}, the right-hand side $z \mapsto L_z$ is $C^{0,\alpha}$.
Since $f$ is Lipschitz and its weak derivative agrees a.e.~ with the continuous function $L_z$, it follows that $f \in C^{1,\alpha}$, $Df(z) = L_z$ for every $z$, and $[Df]_{C^{0,\alpha}} \leq C r_0^{- \alpha} \cE^{\frac{1}{2}}$.
Finally, because $S^{(x)}$ is uniformly close to $S_0$, while $f( \pi_{S_0}x_0) = S_0^{\perp} x_0 = 0$ after translation, we obtain
\begin{equation}\label{eqn:c0alpha-bound}
\| Df \|_{L^{\infty}} \leq C \cE^{\frac{1}{2}}, \qquad \| f\|_{L^{\infty}} \leq C r \| Df \|_{L^{\infty}} \leq C r \cE^{\frac{1}{2}}
\end{equation}
because $D$ contains a ball.
Then,~\eqref{eqn:holder-planes} gives the $C^{0,\alpha}$ seminorm.

We finally record the dependence on the original smallness parameter.
The number $\theta$ is fixed once and for all in terms of the structural data, with $\max \{ C_0 \theta^2, \theta^{1 - \frac{m}{p}} \} \leq \frac{1}{8}$ and $K := 8 \theta^{-(m+2)}$ so $\theta, K$ are independent of $\delta (x_0, S_0, r)$ in the notation of Theorem~\ref{thm:allard}.
For $t \in [0,\ve_{\star}]$, we define
\[
\eta(t) := \sup\left\{ E(V,x_0+S_0,x_0,3\theta r) + K\Lambda(V,x_0,3\theta r): \delta(x_0,S_0,r)\leq t \right\},
\]
where the supremum is taken over all configurations satisfying the hypotheses of Theorem~\ref{thm:allard}. 
For every $\tau>0$, Lemma~\ref{lemma:persistence-of-mass-one} provides a $d_{\tau} > 0$ such that $E(V, x_0+S_0, x_0, 3 \theta r) \leq C \tau^2$ whenever $\delta(x_0, S_0, r) \leq d_{\tau}$.
Moreover, $\Lambda(V, x_0, 3 \theta r) \leq C_{\theta} \delta (x_0, S_0, r)$.
Consequently, 
\[
\eta(t) \leq C\tau^2 + K C_{\theta} t \qquad \text{for } \; t \leq d_{\tau}.
\]
Since $\tau>0$ is arbitrary, this proves $\lim_{t\downarrow0}\eta(t)=0$; also, $\eta(t)$ is non-decreasing by definition.

We now set $(r_0, A_0) := (\theta r, x_0 + S_0)$.
For every point $x$ at which we recenter the argument, as in Corollary~\ref{cor:recenter-uniformly}, let $(r_0^{(x)}, A_0^{(x)}) := (\rho r_0, A_0) = (\rho \theta r, A_0)$.
The estimates therein give
\[
E (V, A_0^{(x)}, x, 3r_0^{(x)}) \leq C_{\rho} \, E(V, A_0, x_0, 3 r_0), \qquad \Lambda (V,x, 3 r_0^{(x)}) \leq C_{\rho} \, \Lambda (V, x_0, 3r_0).
\]
Hence, uniformly over all the recentered iterations that apply Proposition~\ref{prop:initiate-the-procedure}, we find
\[
\cE_x := E(V, A_0^{(x)} , x, 3r_0^{(x)}) + K \, \Lambda (V, x, 3r_0^{(x)}) \leq C_{\rho} \, \eta ( \delta (x_0, S_0, r))
\]
because the estimates of Corollaries~\ref{cor:s-holder} and~\ref{cor:recenter-uniformly} are uniform under recentering at $x \in \textup{spt} \, \|V\| \cap B_{cr_0}(x_0)$.
Hence, using the relations~\eqref{eqn:holder-planes} and~\eqref{eqn:c0alpha-bound} with right-hand side involving $\cE_x^{\frac{1}{2}}$, we conclude
\[
r^{-1} \| f \|_{L^{\infty}} + \| Df \|_{L^{\infty}} + r^{\alpha_{\star}} [Df]_{C^{0,\alpha_{\star}}} \leq C \, \sup_{x \in \tilde{G}} \cE_x^{\frac{1}{2}} \leq C \, \eta(\delta(x_0, S_0, r))^{\frac{1}{2}}.
\]
Consequently, after increasing the constant if necessary,
the conclusion of the theorem holds with $\omega_{\star}(t) = C \eta(t)^{\frac{1}{2}}$.
This completes the proof.
\end{proof}

\begin{remark}\label{rmk:on-allard}
We collect here some observations on Theorem~\ref{thm:allard}.
\begin{enumerate}[(i)]
    \item After identifying $V = v(\textup{graph} \, f,1)$ in $B_{\gamma_{\star} r}$, testing $\delta_{\Psi} V$ with vertical vector fields shows that
    \[
    \textup{div} ( DF_{\Psi}(Df)) = \cA(Df) \, \pi_{\bR^{N-m}} H_{\Psi}(z,f(z))
    \]
    where $F_{\Psi}$ and $\cA$ are defined in~\eqref{graphical}.
    For small $\|Df\|_{L^{\infty}}$, QEC makes this a uniformly elliptic system; thus, if $\Psi \in C^{2,\beta}$ for some $\beta \in (0,1)$, standard interior $W^{2,p}$ estimates give $f \in W^{2,p}_{\textup{loc}} \hookrightarrow C^{1, 1- \frac{m}{p}}$. 
    If $H_{\Psi} \in L^{\infty}$, the endpoint regularity is $f \in W^{2,q}_{\textup{loc}}$ for every $q<\infty$, hence $f \in C^{1,\gamma}_{\textup{loc}}$ for every $\gamma<1$.
    More generally, if $\Psi \in C^{k+2,\beta}$ and $z \mapsto H_{\Psi}(z,f(z))$ is $C^{k,\beta}_{\textup{loc}}$, then Schauder theory yields $f \in C^{k+2,\beta}_{\textup{loc}}$.
    See~\cite{giaquinta-martinazzi} for the relevant elliptic estimates for systems.
    \item The $C^2$ regularity required of the anisotropy cannot be lowered to $C^{1,\sigma}$ for any $\sigma \leq 1$ within the present argument.
    The Caccioppoli inequality of Proposition~\ref{prop:caccioppoli}  requires the Lipschitz continuity of $B_{\Psi}$, hence of $D \Psi$; this forces $\Psi \in C^{1,1}$.
    The stronger $C^2$ assumption is used in the non-parametric formulation of the problem via $F_{\Psi}$, to ensure that the differential $D^2 F_{\Psi}$ satisfies the Legendre-Hadamard condition of Proposition~\ref{prop:quasiconvexity} pointwise and exists at every reference plane produced by the iteration of Proposition~\ref{prop:initiate-the-procedure}.
    \item In view of~\cite{anisotropic-michael-simon}*{Theorem 4.1}, the assumption~\ref{ass:our-ass} requiring a global Michael-Simon inequality can be weakened for varifolds whose a.e.~tangent planes lie in any given Borel set $\Gamma \subset \bG(N,m)$.
    Instead, we only require the anisotropy $\Psi$ to admit a concave barrier for which the projected stress tensor $E^t_P B_{\Psi}(T) E_P$ has a uniform positive average over planes $P$ with respect to some probability measure on $\bG(N,m)$ for every $T \in \Gamma$.
\end{enumerate}
\end{remark}

\section{Examples of atomic conditions}

We now exhibit the first examples of anisotropic integrands not close to the area functional satisfying the various atomic conditions, and notably USAC.

\subsection{Atomic conditions for \texorpdfstring{$\ell^q$}{l^q} norms}

In this section, we prove Theorem~\ref{thm:ell-p-on-G(N,m)}: we will show that the $\ell^q$ anisotropies are USAC in every dimension and codimension, for $q \in [ q_{N,m}, 2]$.
Following~\cite{derosa-tione-regularity}*{\S 6}, the $\ell^q$ anisotropy $\Psi_q$ is defined on $\bG(N,m)$ by
\begin{equation}\label{eqn:psi-p-norm}
    \Psi_q(T) := \Bigl( \sum_{ |I| = m } |(\xi_T)_I|^q \Bigr)^{\frac{1}{q}}
\end{equation}
where $\xi_T$ is a Euclidean unit simple $m$-vector spanning $T \in \bG(N,m)$.
We identify planes with orthogonal projections and use the Frobenius norm distance.
For an even one-homogeneous norm $F$ on $\bigwedge^m\bR^N$ that is $C^1$ away from the origin, and a unit simple $m$-vector $\xi$ spanning $T$, we let
\begin{equation}\label{eqn:nu-xi}
    \nu_{\xi} := \frac{DF(\xi)}{F(\xi)}, \qquad (\Pi_T)_{ab} := \la \iota_{e_a} \nu_{\xi}, \iota_{e_b} \xi \rg,
\end{equation}
where $\iota_{e_a}$ denotes the vector contraction.
Then,
\begin{equation}\label{eqn:normalized-scalar-kernel}
    \la B_{\Psi_q}(T), B_{\Psi^*_q}(S^{\perp} ) \rg = \Psi_q(T) \Psi_q(S) K_F(T,S), \qquad \text{where} \quad K_F(T,S) := m - \textup{tr}(\Pi_T \Pi_S).
\end{equation}
Here, we recall that $\Psi^*_q(S^{\perp}) = \Psi_q(S)$ and take $F_0(\xi) = \| \xi \|_{\ell^q}$ to be the Pl\"ucker norm.
Our main observation is that, for $2 - \frac{7}{10 \min \{m, N-m \}}\leq q \leq 2$,
\begin{proposition}\label{prop:kernel}
There is a $C_{N,m,q}>0$ such that $K_F(T,S) \geq C_{N,m,q} ( m - \textup{tr}(TS))$.
\end{proposition}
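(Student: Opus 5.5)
Our plan is to make $\Pi_T$ explicit in adapted coordinates and reduce Proposition~\ref{prop:kernel} to an inequality between quadratic quantities in the principal angles of $T$ and $S$. Write $\xi=\xi_T$ and $F=F_0=\|\cdot\|_{\ell^q}$. Then $\nu_\xi$ has Plücker coordinates $(\nu_\xi)_I = |\xi_I|^{q-2}\xi_I/\|\xi\|_q^q$, so $\langle \nu_\xi,\xi\rangle=1$. Contracting with a basis vector,
\[
(\Pi_T)_{ab}=\sum_{I\ni a,b}\epsilon_{I,a,b}\,(\nu_\xi)_I\,\xi_I\text{-type terms}.
\]
In particular, $\Pi_T$ is a matrix with $\operatorname{tr}\Pi_T=m$ that restricts to the identity on $T$ modulo $T^\perp$-valued corrections, i.e.\ $\Pi_T T=T$ and $\Pi_T^2=\Pi_T$. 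This is the oblique projection mentioned in the introduction, with range $T$ and kernel determined by $\nu_\xi$. For $q=2$ we have $\nu_\xi=\xi$ and $\Pi_T=T$, so $K=m-\operatorname{tr}(TS)$ exactly. The claim is therefore a perturbation statement in $2-q$, but we want explicit uniformity down to $q=2-\frac{7}{10\min\{m,N-m\}}$.

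\textbf{Step 1.} Write $\Pi_T=T+P_T$ with $P_T=T^\perp P_T T$, a nilpotent correction satisfying $P_T\xi$-compatibility. Then
\[
\operatorname{tr}(\Pi_T\Pi_S)=\operatorname{tr}(TS)+\operatorname{tr}(P_TS)+\operatorname{tr}(TP_S)+\operatorname{tr}(P_TP_S).
\]
The cross terms vanish when $S=T$, and $K(T,T)=m-\operatorname{tr}(\Pi_T^2)=0$.

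\textbf{Step 2.} Bound $\|P_T\|$ in terms of $2-q$. Using the Plücker/Hölder structure, the off-diagonal block $T^\perp\nu_\xi$-contraction is controlled by the deviation of $|\xi_I|^{q-2}$ from constant. We aim for a bound of the form $\|P_T\|_{op}\le \kappa_q$ with $\kappa_q\lesssim (2-q)\min\{m,N-m\}$. Duality $\Psi^*(S^\perp)=\Psi(S)$ lets us restrict to $m\le N-m$, which explains the $\min$.

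\textbf{Step 3.} Show each error term is $O(\kappa_q\|T-S\|^2)$ rather than $O(\kappa_q\|T-S\|)$. Since $P_T=T^\perp P_T T$, we get
\[
\operatorname{tr}(P_TS)=\operatorname{tr}\bigl(P_T\,T S T^\perp\bigr),
\]
and $\|TST^\perp\|\le\|T-S\|$. We then pair it with the corresponding $P_S$ term, using $P_T-P_S=O(\|T-S\|)$ (via the Lipschitz dependence of $\nu$ on $\xi$), so that the sum of first-order terms combines into a second-order quantity. Concretely, we write the total error as $\operatorname{tr}((P_T-P_S)(S-T))$-type expressions plus $\operatorname{tr}(P_TP_S)$ with $P_TP_S=P_T(T-S)P_S$-type factorization.

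\textbf{Step 4.} Conclude that $K_F\ge (1-C\kappa_q)\tfrac12\|T-S\|^2$ globally. The explicit constant $\frac{7}{10}$ comes from optimizing $C\kappa_q<1$.

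The main obstacle is Step 3 globally, not just near $T=S$, with a sharp enough constant. The Lipschitz constant of $T\mapsto P_T$ degenerates where Plücker coordinates vanish, since $|\xi_I|^{q-2}$ blows up for $q<2$. We would first try diagonalizing $T,S$ by principal angles, which reduces the problem to a computation on products of $2$-planes. If the Plücker coordinates aren't aligned with that basis, we would fall back on a compactness argument for SAC plus a Hessian check at $S=T$. This would give existence of $C_{N,m,q}$ but not the explicit range.
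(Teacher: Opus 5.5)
Your proposal has two genuine gaps. Step~2 is false as stated, and Step~3 cannot be closed by absolute-value estimates; the fallback does not rescue existence either.

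\textbf{Step 2.} Your perturbative scheme rests on this step. In the original Pl\"ucker coordinates, the residual $r_\xi=\nu_\xi-\xi$, and hence $P_T$, is not controlled by $2-q$ and $\min\{m,N-m\}$. Its size is governed by how many Pl\"ucker coordinates of $\xi$ are nonzero, which can be of order $\binom{N}{m}$. Take $m=2$, $K:=N-2$, and $\xi=e_1\wedge v$ with $v=\tfrac{1}{\sqrt2}e_2+(2K)^{-1/2}\sum_{j=3}^{N}e_j$. A direct computation gives $\Pi_T=T+\rho\otimes v$ with $\rho\in T^\perp$ and
\[
\langle\rho,e_2\rangle=\frac{1-K^{(2-q)/2}}{\sqrt2\,\bigl(1+K^{(2-q)/2}\bigr)}\longrightarrow-\frac{1}{\sqrt2}\qquad(N\to\infty)
\]
for every fixed $q<2$. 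So $\|P_T\|_{\mathrm{op}}$ stays of order one however close $q$ is to $2$. This does not contradict the proposition, whose constant may depend on $N$. It does rule out any bound $\kappa_q\lesssim(2-q)\min\{m,N-m\}$, so the stated range of $q$ cannot be reached this way.

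The paper overcomes this with an ingredient your plan lacks. It restricts to $W=T_0+S_0$, of dimension at most $2m$, and conjugates by $L=Q^{1/2}$, where $Q$ minimizes $\tfrac2q\sum_i(a_i^tQa_i)^{q/2}-\log\det Q$ for the projected frame $a_i=\pi_We_i$. In this balanced position, $F(z)^q=\sum_IA_I|\langle b_I,z\rangle|^q$ and the frame identity $z=\sum_IA_I\langle b_I,z\rangle b_I$ carry the same weights, with $\sum_IA_I\le(2m)^m/m!$. The moment bound of Lemma~\ref{lemma:elementary-inequalities}, with $h_m=\tfrac{20m}{7}$, then gives residual bounds such as $|r_\xi|+|r_\eta|\le\tfrac{13}{16}$ depending on $m$ only. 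That choice of exponent is the source of the threshold $2-\tfrac{7}{10m}$; it is not an optimization of $C\kappa_q<1$. Since $K_F$ is invariant under this conjugation and $m-\operatorname{tr}(TS)$ changes by at most a factor $\kappa(L)^2\le N^{(2-q)/q}$, the $N$-dependence is confined to the final constant.

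\textbf{Step 3.} This fails independently of Step~2. The map $T\mapsto\nu_{\xi_T}$ involves $|\xi_I|^{q-2}\xi_I$ and is only $C^{0,q-1}$ near planes with a vanishing Pl\"ucker coordinate, so $P_T-P_S=O(\|T-S\|)$ does not hold. Pairing the first-order cross terms $\operatorname{tr}(P_T\,T(S-T)T^\perp)+\operatorname{tr}(P_S\,S(T-S)S^\perp)$ therefore leaves at best an error $O(\|T-S\|^{q})$, which is not $O(\|T-S\|^2)$. No absolute-value bound can close the argument; what is needed is a sign rather than smallness.

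In principal-angle frames (Lemma~\ref{lemma:two-plane-estimate}), the paper isolates the first-order part as $1-\langle\nu_\xi,\eta\rangle\langle\nu_\eta,\xi\rangle-(1-\langle\xi,\eta\rangle^2)$. It bounds this from below by a monotonicity inequality for the gradient of the convex function $F^q-(q-1)|\cdot|^q$ (Corollary~\ref{cor:kernel}), which needs no Lipschitz control of $\nu$. Only the remaining terms, whose geometric factors $g_\xi$, $g_\eta$, $\eta-c\xi$, $\beta_i\beta_j$ are $O(m-\operatorname{tr}(TS))$, are estimated in absolute value.

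\textbf{The fallback.} A compactness argument presupposes SAC, i.e.\ $K_F>0$ for $T\neq S$, which is the substance of the claim; it fails for $q>4$ by Remark~\ref{rmk:when-usac-fails}. A Hessian check at $S=T$ is also unavailable, because $\Psi_q$ is not $C^2$ at planes with vanishing Pl\"ucker coordinates.
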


We first record some elementary inequalities.
We write, for brevity,
\begin{equation}\label{eqn:eps-m-q-m-constants}
\ve_m = \frac{7}{10m}, \qquad h_m = \frac{2}{\ve_m} = \frac{20m}{7}, \qquad M_m = \frac{(2m)^m}{m!}.
\end{equation}

\begin{lemma}\label{lemma:elementary-inequalities}
Let $R \geq 1$ satisfy $\bE [ R^{h_m}] \leq M_m$.
For $m \geq 4$, there is a $b_m \geq 1$ with
\begin{equation}\label{eqn:expected-value}
\bE[ (R^{\theta} - b^{\theta}_m)^2 ] \leq ( \tfrac{13}{32} \bigr)^2 \, \theta^2  \qquad \text{for every } \; \theta \in [0,1].
\end{equation}
For $m \in \{ 2, 3\}$, we can find a $b_m$ with $\bE[ (R^{\theta} - b^{\theta}_m)^2] \leq ( \frac{m+5}{20})^2 \theta^2$ for every $\theta \in [0,1]$.
\end{lemma}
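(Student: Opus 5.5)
The plan is to reduce \eqref{eqn:expected-value} to an estimate on $\log R$ that the single moment bound $\bE[R^{h_m}]\le M_m$ can control. Write $X:=\log R\ge 0$ and $\beta:=\log b_m\ge 0$. The fundamental theorem of calculus gives
\[
R^\theta-b_m^\theta=\theta\,(X-\beta)\int_0^1 e^{\theta(sX+(1-s)\beta)}\,ds .
\]
Since $\theta\le 1$ and $X,\beta\ge 0$, the integral is at most $e^{\max\{X,\beta\}}$. Hence
\[
\bE\bigl[(R^\theta-b_m^\theta)^2\bigr]\le \theta^2\,\bE\bigl[(X-\beta)^2\max\{R,b_m\}^{2}\bigr].
\]
Both sides carry the factor $\theta^2$, so it suffices to choose $b_m\ge 1$ with
\[
Q(b_m):=\bE\bigl[(X-\beta)^2\max\{R,b_m\}^2\bigr]\le \bigl(\tfrac{13}{32}\bigr)^2 ,
\]
and with $(\tfrac{m+5}{20})^2$ in place of $(\tfrac{13}{32})^2$ when $m\in\{2,3\}$. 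This is a statement about a single number and no longer involves $\theta$.

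To bound $Q(b)$, I would use only the moment hypothesis, in two forms.

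\emph{(a) Pointwise domination.} For every $R\ge 1$,
\[
(\log R-\beta)^2R^2\le c_{m,\beta}\,R^{h_m},\qquad c_{m,\beta}:=\sup_{t\ge 0}(t-\beta)^2e^{(2-h_m)t}.
\]
This supremum is explicit and, as a function of $\beta$, behaves like $\beta^2$ plus a term of order $(2/(h_m-2))^2e^{-2}$.

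\emph{(b) Jensen.} From $h_m\bE[X]\le\log\bE[R^{h_m}]\le \log M_m$ we get $\bE X\le h_m^{-1}\log M_m$. Stirling gives $M_m^{1/h_m}=\bigl((2m)^m/m!\bigr)^{7/(20m)}\to (2e)^{7/20}$, so this upper bound on $\bE X$ stays bounded in $m$.

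Plugging (a) directly into $Q$ with $b=1$ loses too much: for $m=4$ it gives roughly $1$, far above $0.165$. I would therefore split at a threshold $T>1$. On $\{R\le T\}$, bound $\max\{R,b\}^2\le \max\{T,b\}^2$ and control $\bE[(X-\beta)^2]$ through a second-moment bound on $X$; this bound comes from (a) with the weight $R^2$ replaced by $1$, together with (b). On $\{R>T\}$, use $(X-\beta)^2R^2\le c'\,R^{h_m}\,T^{-(h_m-2-\eta)}$ for a small $\eta>0$, which makes the tail contribution at most $c'M_mT^{-(h_m-2-\eta)}$. The parameters would be $\beta$ close to $\bE X$ (or simply $\beta$ equal to the upper bound in (b)) and $T$ a moderate constant, optimized numerically.

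For $m\ge 4$, I expect that $h_m-2\ge 66/7$ makes the tail factor $T^{-(h_m-2)}$ overwhelm the growth $M_m\sim (2e)^m$ uniformly. Then a bound valid for all $m\ge4$ should follow from a monotonicity-in-$m$ argument together with an explicit check at $m=4$. The cases $m=2,3$ have smaller $h_m$ ($40/7$ and $60/7$), which explains the weaker constants $\tfrac{7}{20},\tfrac{8}{20}$; for these I would carry out the same optimization explicitly.

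The main obstacle is purely quantitative: reaching the specific constant $\tfrac{13}{32}$ uniformly in $m\ge4$. If the crude split does not suffice, I would first try a sharper pointwise inequality of the form
\[
(t-\beta)^2e^{2t}\le \alpha_0+\alpha_1 e^{h_mt}\qquad(t\ge0),
\]
optimizing over $(\beta,\alpha_0,\alpha_1)$, so that $Q(b)\le\alpha_0+\alpha_1M_m$. This reduces the lemma to a one-dimensional calculus verification for each fixed $m$, plus asymptotics in $m$.
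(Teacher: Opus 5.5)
\textbf{There is a genuine gap: your reduction to a $\theta$-free statement loses too much, and the reduced inequality is false for large $m$.}

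Your estimate $|R^\theta-b_m^\theta|\le\theta\,|\log R-\log b_m|\max\{R,b_m\}$ is true. It is, however, strictly weaker than the elementary bound $|R^\theta-b_m^\theta|\le\theta\,|R-b_m|$, which holds because $r\mapsto r^\theta$ is $\theta$-Lipschitz on $[1,\infty)$. The reason is that $|R-b_m|=\bigl|\int_\beta^X e^t\,dt\bigr|$ involves the average of $e^t$, while you bound by its maximum.

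The lemma has essentially no room for this loss. Set $L_m:=M_m^{1/h_m}$. Both $R\equiv1$ and $R\equiv L_m$ are admissible, and $L_m$ increases in $m$ to $(2e)^{7/20}\approx1.8087$. Hence even $\inf_b\sup_R\bE[(R-b)^2]\ge\tfrac14(L_m-1)^2$, and $\tfrac12(L_m-1)\to0.4044$. So $\tfrac{13}{32}=0.40625$ exceeds the limiting best constant by less than half a percent.

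For your functional $Q(b)$, the same two point masses force two conditions:
\begin{itemize}
\item from $R\equiv1$: $\beta e^\beta\le\tfrac{13}{32}$, i.e.\ $\beta<0.301$;
\item from $R\equiv L_m$: $(\log L_m-\beta)L_m\le\tfrac{13}{32}$.
\end{itemize}
At $m=20$ one has $\log L_{20}\approx0.5502$ and $L_{20}\approx1.7337$, so the second condition forces $\beta>0.315$. Thus no $b_m\ge1$ gives $Q(b_m)\le(\tfrac{13}{32})^2$. No choice of threshold $T$, pointwise domination, or Jensen bound on $\bE X$ can repair this.

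This also shows that your plan to ``check $m=4$ plus monotonicity in $m$'' runs in the wrong direction. Since $L_4\approx1.568$ and $L_m$ increases, the binding regime is $m\to\infty$, not $m=4$.

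\textbf{The missing idea.} Keep the sharp reduction, so the goal is $\bE[(R-b_m)^2]\le(\tfrac{13}{32})^2$, and use the moment hypothesis through a quadratic minorant of $r^{h_m}$. The paper takes $Q(r)=1+\lambda\bigl((r-b_m)^2-(1-b_m)^2\bigr)$ with $Q(1)=1$ and $Q$ tangent to $r^{h_m}$ at $r_*=\tfrac{29}{16}$, chosen just above $(2e)^{7/20}$. Since $(r^{h_m})'''>0$, this gives $Q\le r^{h_m}$ on $[1,\infty)$. Taking expectations yields $\bE[(R-b_m)^2]\le(1-b_m)^2+\lambda^{-1}(M_m-1)$. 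The bound $M_m<\tfrac14(\tfrac{29}{16})^{h_m}$ for all $m\ge4$ is checked at $m=4$ and propagated by the ratio $2(1+\tfrac1m)^m(\tfrac{29}{16})^{-20/7}<1$. With the explicit $b_m$, this closes the estimate strictly below $(\tfrac{13}{32})^2$.
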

\begin{proof}
It suffices to prove the claim for $\theta=1$, namely that $\bE[ (R - b_m)^2] \leq ( \tfrac{13}{32} \bigr)^2$.
Indeed, the map $r \mapsto r^{\theta}$ is $\theta$-Lipschitz on $[1,\infty)$ for $\theta \in [0,1]$, so $\bE[ (R^{\theta} - b^{\theta}_m)^2] \leq \theta^2 \bE[ (R-b_m)^2]$ will imply~\eqref{eqn:expected-value} once the case $\theta=1$ is established.
We observe that $M_m < \frac{1}{4} ( \frac{29}{16})^{h_m}$ for $m \geq 4$; indeed, this holds for $m=4$ since $h_4>11$ and $( \frac{29}{16})^{11} > 4 M_4$, while
\[
\frac{M_{m+1}/ ( \frac{29}{16})^{h_{m+1}}}{M_{m}/ ( \frac{29}{16})^{h_{m}}} = 2 \Bigl( 1 + \frac{1}{m} \Bigr)^m \Bigl( \frac{29}{16} \Bigr)^{- \frac{20}{7}}  < 2 e \Bigl( \frac{29}{16} \Bigr)^{- \frac{20}{7}} < 1.
\]
We now use the fact that $\frac{13}{29} h_m > 2$ to define $b_m \geq 1$ by
\begin{align}
    b_m &:= 1 + \frac{13}{32} \frac{( \frac{29}{16})^{h_m} (\frac{13}{29} h_m - 2) + 2}{( \frac{29}{16})^{h_m} ( \frac{13}{29} h_m - 1) + 1}, \\
    Q(r) &:= 1 + \tfrac{256}{169} \bigl[  \bigl( \tfrac{29}{16} \bigr)^{h_m} \bigl( \tfrac{13}{29} h_m - 1 \bigr) + 1 \bigr] \bigl( (r-b_m)^2 - (1-b_m)^2 \bigr). \label{eqn:Q(r)-quadratic}
\end{align}
Then, $Q(1) = 1$ and $Q(\frac{29}{16}) = ( \frac{29}{16})^{h_m}$, with $Q'( \frac{29}{16}) = h_m ( \frac{29}{16})^{h_m-1}$.
Since $(r^{h_m})''' > 0$ due to $h_m>3$, we deduce that $Q(r) \leq r^{h_m}$ for $r \geq 1$.
Note that
\[
\frac{1}{4} - \Bigl( \frac{16(b_m-1)}{13} \Bigr)^2 = \frac{1}{4} \frac{ ( ( \frac{29}{16})^{h_m}-1) [ ( \frac{29}{16})^{h_m} ( \frac{26}{29} h_m - 3) + 3] }{ \bigl[ ( \frac{29}{16})^{h_m} ( \frac{13}{29} h_m - 1) + 1 \bigr]^2 } \geq \frac{( \frac{29}{16})^{h_m}-1}{ 4 \, \bigl[ ( \frac{29}{16})^{h_m} ( \frac{13}{29} h_m - 1) + 1 \bigr] }.
\]
Recalling that $M_m < \frac{1}{4} (\frac{29}{16})^{h_m}$, we take expectations in~\eqref{eqn:Q(r)-quadratic} and usse $Q(r) \leq r^{h_m}$ to find
\[
\bE [ (R-b_m)^2] \leq \Bigl( \frac{13}{16} \Bigr)^2 \frac{M_m-1}{( \frac{29}{16})^{h_m} ( \frac{13}{29} h_m-1) + 1} + (b_m-1)^2 < \Bigl( \frac{13}{32} \Bigr)^2
\]
as desired.
This covers $m \geq 4$.
For $m=3$, the same property follows from bounding $M_3 < \frac{1}{4}(\frac{9}{5})^{h_3}$ and constructing the analogous quadratic with $\frac{29}{16}$ replaced by $\frac{9}{5}$; the adaptation to $m=2$ is analogous, with final bound $\bE[ (R^{\theta} - b^{\theta}_2)^2] \leq ( \frac{7}{20})^2 \theta^2$.
This completes the proof.
\end{proof}

\begin{lemma}\label{lemma:lots-of-letters}
Consider numbers $\sigma, \tau,b>0$, $u,v\geq 0$, and $s \in [0, \frac{b}{\sqrt{2}}]$ with $u+(\sigma-b)^2 \leq s^2$ and $v + (\tau-b)^2 \leq s^2$.
Then, it holds that $\sqrt{u}\sqrt{\tau/\sigma} + \sqrt{v} \sqrt{\sigma/\tau} \leq 2s$.
\end{lemma}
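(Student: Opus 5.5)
The plan is to reduce the claim to a trigonometric inequality in two angles. The parameter $b$ enters only through the ratio $t := s/b \in [0, \frac{1}{\sqrt{2}}]$. If $s=0$, then $u=v=0$ and the claim is trivial, so assume $s>0$. Since the left-hand side is increasing in $u$ and $v$, we may take $u$ and $v$ as large as allowed.

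\textbf{Step 1: reduction.} Take $u = s^2 - (\sigma-b)^2$ and $v = s^2-(\tau-b)^2$; these are non-negative because $u,v\geq 0$ forces $|\sigma-b|,|\tau-b|\leq s$. Write
\[
\sigma = b(1+t\cos a), \qquad \tau = b(1+t\cos c), \qquad a,c\in[0,\pi].
\]
Then $\sqrt{u} = s\sin a$ and $\sqrt{v} = s\sin c$. Both $1+t\cos a$ and $1+t\cos c$ are at least $1-t>0$. Multiplying numerator and denominator by $\sqrt{\sigma\tau}$, the claim $\sqrt{u}\sqrt{\tau/\sigma}+\sqrt{v}\sqrt{\sigma/\tau}\leq 2s$ becomes
\[
\sin a\,(1+t\cos c) + \sin c\,(1+t\cos a) \leq 2\sqrt{(1+t\cos a)(1+t\cos c)},
\]
that is,
\[
\sin a+\sin c+t\sin(a+c)\leq 2\sqrt{(1+t\cos a)(1+t\cos c)}. \tag{$\ast$}
\]

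\textbf{Step 2: the diagonal case.} For $a=c$, the inequality $(\ast)$ reads $2\sin a\,(1+t\cos a)\leq 2(1+t\cos a)$. This holds because $1+t\cos a>0$ and $\sin a\leq 1$. It shows the inequality is sharp along the diagonal at $a=\frac{\pi}{2}$.

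\textbf{Step 3: the general case.} Pass to the coordinates $p=\frac{a+c}{2}$ and $d=\frac{a-c}{2}$. The left-hand side of $(\ast)$ becomes $2\sin p\cos d+t\sin 2p$. Using $\cos a\cos c=\cos^2p-\sin^2 d$, the square of the right-hand side is
\[
4\bigl[1+2t\cos p\cos d+t^2(\cos^2 p-\sin^2 d)\bigr].
\]
Both sides are non-negative, so it suffices to compare squares, and set $x:=\cos d\in[0,1]$. The difference (right minus left) is then a polynomial in $x$:
\begin{itemize}
\item it has degree $2$;
\item its $x^2$-coefficient is $-4\sin^2p+4t^2$;
\item its $x$-coefficient is $8t\cos p-4t\sin p\sin 2p=8t\cos^3p$, using $1-\sin^2 p=\cos^2 p$;
\item its constant term is $4+4t^2\cos^2p-4t^2-t^2\sin^2 2p$.
\end{itemize}
At $x=1$ the difference reduces to the diagonal case of Step 2, so it is non-negative there. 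The goal is to show it is non-negative on all of $[0,1]$.

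\textbf{Main obstacle.} I expect the main difficulty to be this minimization over $x\in[0,1]$, and this is where $t\leq 1/\sqrt{2}$ should enter. I would treat two cases.
\begin{itemize}
\item If $\sin^2p\geq t^2$, the quadratic is concave in $x$. Its minimum on $[0,1]$ is then attained at an endpoint, so it suffices to check $x=0$ as well as $x=1$. At $x=0$ the value is
\[
4-4t^2\sin^2 p-4t^2\sin^2p\cos^2p\ \geq\ 4-8t^2\ \geq\ 0,
\]
which uses exactly $t^2\leq\frac12$.
\item If $\sin^2 p<t^2$, the quadratic is convex. I would bound it below directly: the constant term is at least $4-8t^2\geq 0$, and the linear and quadratic coefficients are non-negative when $\cos p\geq 0$. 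When $\cos p<0$, I would use the symmetry $(a,c)\mapsto(\pi-a,\pi-c)$ together with $t\mapsto -t$, which reduces this situation to $\cos p\geq 0$ with $t$ replaced by $-t$, and then redo the check for $t\in[-1/\sqrt{2},0]$.
\end{itemize}
The sign bookkeeping in this second case is the part I would verify most carefully.
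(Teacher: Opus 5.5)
Your reduction is sound. The substitution $\sigma=b(1+t\cos a)$, $\tau=b(1+t\cos c)$ correctly turns the claim into $(\ast)$, and the diagonal case is right. The coefficients of the quadratic $D(x)$ (right side squared minus left side squared) are computed correctly, $D(1)=4\cos^2p\,(1+t\cos p)^2\ge0$, and the concave case $\sin^2p\ge t^2$ is complete.

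The gap is the convex case $\sin^2p<t^2$ with $\cos p<0$, which does occur for admissible data (e.g.\ $t=0.7$, $a=\pi$, $c=0.8\pi$). There the linear coefficient $8t\cos^3p$ is negative, so the argument ``constant term $\ge0$ plus non-negative coefficients'' is unavailable. The symmetry you propose does not remove this. $D$ depends on $(p,t)$ only through $t^2$, $\sin^2p$ and $t\cos p$, all of which are invariant under $(p,t)\mapsto(\pi-p,-t)$. After the reflection you face the identical polynomial, still with $t\cos p<0$. So ``redoing the check for $t\in[-1/\sqrt2,0]$'' is exactly the open case, not a reduction of it.

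The missing step is short. In this case $\cos^2p>1-t^2\ge\frac12\ge t^2$, so $|\cos p|\ge t$. Hence $t^2-\sin^2p=t^2\cos^2p-(1-t^2)\sin^2p\le t^2\cos^2p\le t|\cos p|^3$. Thus $D'(1)=8(t^2-\sin^2p)+8t\cos^3p\le0$. Since $D$ is convex, $D'\le0$ on $[0,1]$, and so $D(x)\ge D(1)\ge0$. With this added, your proof is complete.

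For comparison, the paper avoids all case analysis. It writes $\sigma=s(B+x)$, $\tau=s(B+y)$ with $B=b/s$, then applies Cauchy--Schwarz and $\frac1z+\frac1w\ge\frac4{z+w}$ to bound the squared left side by $4-(x+y)^2\le4$. That argument in fact only uses $B\ge1$ (i.e.\ $s\le b$). Your route, as written, uses $t^2\le\frac12$ both in the bound $D(0)\ge4-8t^2$ and in the step above.
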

\begin{proof}
It suffices to consider $s>0$ and let $(B,x,y) := ( \frac{b}{s}, \frac{\sigma-b}{s}, \frac{\tau-b}{s})$, so $\sigma,\tau>0$ and $|\sigma-b|, |\tau-b| \leq s$ makes $|x|,|y| \leq 1$ and $B \geq \sqrt{2}$, with $\max\{ \frac{u}{1-x^2}, \frac{v}{1-y^2} \} \leq s^2$.
Also, $(\sigma, \tau) = s ( B+x, B+y)$.
We express the stated inequality in terms $(B,x,y)$, use Cauchy-Schwarz, and write $x+y$ to arrive at
\begin{align*}
    \Bigl( \sqrt{1-x^2} \textstyle{ \sqrt{\tfrac{B+y}{B+x} }} + \sqrt{1-y^2} \textstyle{ \sqrt{\tfrac{B+x}{B+y} }} \Bigr)^2 &\leq \bigl( \tfrac{1-x^2}{B+x} + \tfrac{1-y^2}{B+y} \bigr) \, [ (B+y) + (B+x)] \\
    &\leq \bigl[ 2B - x-y - 4 \tfrac{B^2-1}{2B+x+y} \bigr] (2B + x+y).
\end{align*}
Here, we used $\frac{1-x^2}{B+x} = B-x - \frac{B^2-1}{B+x}$ and $\frac{1}{z} + \frac{1}{w} = \frac{z+w}{zw} \geq \frac{4}{z+w}$ for $z,w>0$.
Consequently,
\[
\Bigl( \sqrt{1-x^2} \textstyle{ \sqrt{\tfrac{B+y}{B+x} }} + \sqrt{1-y^2} \textstyle{ \sqrt{\tfrac{B+x}{B+y} }} \Bigr)^2 \leq 4B^2-(x+y)^2 - 4(B^2-1) = 4-(x+y)^2 \leq 4.
\]
Therefore, $\sqrt{1-x^2} \sqrt{\frac{B+y}{B+x} } + \sqrt{1-y^2} \sqrt{\frac{B+x}{B+y} } \leq 2$ proves our assertion.
\end{proof}

For simplicity, we will treat $m \geq 4$ in what follows; the cases $m \in \{2,3\}$ follow from straightforward modifications of the constants as in Lemma~\ref{lemma:elementary-inequalities}.
The Hodge star operator permutes Pl\"ucker coordinates up to signs, so the anisotropy and the scalar kernel are preserved under $m \leftrightsquigarrow N-m$.
Hence, it suffices to consider $\min \{ m,N-m \} =m \leq \frac{1}{2}N$.

We will rearrange the Pl\"ucker vectors into a convenient form.
\begin{lemma}\label{lemma:balanced-stress}
Suppose that $m \geq 4$ and $q \in [ 2 - \frac{7}{10 m}, 2]$, and define
\[
F(z)^q = \sum_I A_I |\la b_I, z \rg|^q, \qquad \text{where } \quad z = \sum_I A_I \la b_I, z \rg b_I, \qquad |b_I| \leq 1, \qquad \sum_I A_I \leq M_m.
\]
For a unit simple vector $\zeta$, we denote $\nu_{\zeta} := \frac{DF(\zeta)}{F(\zeta)}$ and $r_{\zeta} := \nu_{\zeta}-\zeta$.
Then, $F^q - (q-1) |\cdot|^q$ is convex.
Moreover, for all unit simple vectors $\xi, \eta$, we have $F(\xi)^q , F(\eta)^q \in [ 1, \frac{29}{16}]$ and
\begin{equation}\label{eqn:r-xi-r-eta-inequalities}
|r_{\xi}| \, |r_{\eta}| \leq ( \tfrac{13}{32})^2 F(\xi)^{- \frac{q}{2}} F(\eta)^{- \frac{q}{2}}, \qquad |r_{\xi}| + |r_{\eta}| \leq \tfrac{13}{16} F(\xi)^{- \frac{q}{2}} F(\eta)^{- \frac{q}{2}}.
\end{equation}
\end{lemma}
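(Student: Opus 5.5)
The plan is to read every quantity as an expectation with respect to a probability measure attached to the frame, and then feed that into Lemma~\ref{lemma:elementary-inequalities} and Lemma~\ref{lemma:lots-of-letters}. The reconstruction identity $z=\sum_I A_I\la b_I,z\rg b_I$ says that $\sum_I A_I\, b_I\otimes b_I=\mathrm{Id}$. Two consequences will be used throughout:
\[
|z|^2=\sum_I A_I\la b_I,z\rg^2,\qquad \Bigl|\sum_I A_I c_I\, b_I\Bigr|\leq\Bigl(\sum_I A_I c_I^2\Bigr)^{1/2}.
\]
The second is the duality/Cauchy--Schwarz bound for the synthesis operator of a Parseval frame.

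\emph{Convexity.} Away from the hyperplanes $\la b_I,z\rg=0$ we have
\[
D^2F^q(z)=q(q-1)\sum_I A_I|\la b_I,z\rg|^{q-2}\,b_I\otimes b_I .
\]
Since $|b_I|\leq1$ and $q-2\leq0$, each weight satisfies $|\la b_I,z\rg|^{q-2}\geq|z|^{q-2}$. Hence $D^2F^q\geq q(q-1)|z|^{q-2}\,\mathrm{Id}$. On the other hand $D^2|z|^q=q|z|^{q-2}\bigl(\mathrm{Id}+(q-2)\tfrac{z\otimes z}{|z|^2}\bigr)\leq q|z|^{q-2}\,\mathrm{Id}$, so $D^2\bigl(F^q-(q-1)|\cdot|^q\bigr)\geq0$ on the generic set. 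Convexity on all of the space then follows by mollifying, or by approximating $|t|^q$ by the smooth functions $(t^2+\epsilon)^{q/2}$, which satisfy the same comparison.

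\emph{Probabilistic reformulation.} Fix a unit simple $\xi$ and define the probability weights $w_I:=A_I\la b_I,\xi\rg^2$, which sum to $|\xi|^2=1$. On indices with $w_I>0$, set
\[
R:=|\la b_I,\xi\rg|^{-2/h_m}\geq1 .
\]
Then $\bE[R^{h_m}]=\sum_{w_I>0}A_I\leq M_m$, so Lemma~\ref{lemma:elementary-inequalities} applies with $\theta:=\tfrac{h_m}{2}(2-q)$. The hypothesis $q\geq2-\ve_m=2-2/h_m$ gives exactly $\theta\in[0,1]$. With $X:=R^\theta=|\la b_I,\xi\rg|^{q-2}$ we get $F(\xi)^q=\bE[X]$.
\begin{itemize}
\item Lower bound: $X\geq1$ gives $F(\xi)^q\geq1$.
\item Upper bound: $\bE X\leq b_m^\theta+\bE|X-b_m^\theta|\leq b_m+\tfrac{13}{32}$.
\end{itemize}
To conclude $F(\xi)^q\leq\tfrac{29}{16}$ I will extract $b_m\leq1+\tfrac{13}{32}$ from the construction in Lemma~\ref{lemma:elementary-inequalities}. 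The fraction defining $b_m$ is less than $1$, and the same proof even gives $(b_m-1)\leq\tfrac{13}{32}$. Equivalently, one can bound $\bE R\leq b_m+(\bE(R-b_m)^2)^{1/2}$ and then use Jensen.

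\emph{The residual $r_\xi$.} We have $DF(\xi)=F(\xi)^{1-q}\sum_I A_I|\la b_I,\xi\rg|^{q-2}\la b_I,\xi\rg b_I$, and therefore
\[
r_\xi=\nu_\xi-\xi=\sum_I A_I\la b_I,\xi\rg\bigl(F(\xi)^{-q}X_I-1\bigr)b_I .
\]
The synthesis bound gives $|r_\xi|^2\leq\bE\bigl[(F^{-q}X-1)^2\bigr]=F(\xi)^{-2q}u$, where $u:=\mathrm{Var}(X)$. Set
\[
\sigma:=F(\xi)^q,\qquad b:=b_m^\theta,\qquad s:=\tfrac{13}{32}\theta .
\]
Then $u+(\sigma-b)^2=\bE[(X-b)^2]\leq s^2$ by Lemma~\ref{lemma:elementary-inequalities}. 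The same construction for $\eta$ gives $\tau:=F(\eta)^q$ and $v:=\mathrm{Var}$ of the corresponding variable, with $v+(\tau-b)^2\leq s^2$. Since $b\geq1$ and $s\leq\tfrac{13}{32}<\tfrac{1}{\sqrt2}$, Lemma~\ref{lemma:lots-of-letters} applies.

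Dividing its conclusion by $\sqrt{\sigma\tau}$ gives
\[
|r_\xi|+|r_\eta|\leq\frac{\sqrt u}{\sigma}+\frac{\sqrt v}{\tau}\leq\frac{2s}{\sqrt{\sigma\tau}}\leq\tfrac{13}{16}F(\xi)^{-q/2}F(\eta)^{-q/2},
\]
which is the second inequality in~\eqref{eqn:r-xi-r-eta-inequalities}. For the product, $\sqrt{u}\sqrt{v}\leq s^2$ gives
\[
|r_\xi|\,|r_\eta|\leq\frac{s^2}{\sigma\tau}\leq\bigl(\tfrac{13}{32}\bigr)^2\sigma^{-1/2}\tau^{-1/2},
\]
where the last step uses $\sigma,\tau\geq1$.

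The main obstacle I expect is bookkeeping rather than ideas. One point is matching the exponent normalization ($R$ versus $|\la b,\xi\rg|^{-1}$, and $\theta$ versus $2-q$) so that the hypothesis $\bE[R^{h_m}]\leq M_m$ holds exactly. The other is confirming the numerical bound $b_m\leq\tfrac{45}{32}$ needed for the upper bound $\tfrac{29}{16}$. The degenerate indices with $\la b_I,\xi\rg=0$ carry zero weight and drop out of every sum.
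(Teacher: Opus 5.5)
Your proposal is correct and takes essentially the same route as the paper. The shared steps are the probability weights $A_I\la b_I,\zeta\rg^2$ with $R=|\la b_I,\zeta\rg|^{-\ve_m}$ and $\theta=(2-q)/\ve_m$, the Parseval-frame bound $F(\zeta)^{2q}|r_\zeta|^2\leq\mathrm{Var}(X)$ combined with the variance decomposition and Lemma~\ref{lemma:lots-of-letters}, and the Hessian comparison for convexity. The only deviation is the upper bound $F(\zeta)^q\leq\frac{29}{16}$: the paper uses the power-mean inequality $F(\zeta)^q\leq M_m^{\theta/h_m}\leq M_m^{1/h_m}<\frac{29}{16}$, whereas you use $F(\zeta)^q\leq b_m^\theta+\frac{13}{32}\theta$ with $b_m<\frac{45}{32}$ (read off from the explicit formula, or by testing $R\equiv 1$ in Lemma~\ref{lemma:elementary-inequalities}), which works equally well.
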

\begin{proof}
Let $\zeta = v_1 \wedge \cdots \wedge v_m \in \bigwedge^{m}W$ be a simple unit length vector where $v_i$ form an orthonormal frame, so $|\zeta| = 1$.
For $t_I := \la b_I, \zeta\rg$, the defining properties of $z$ imply that $\sum_I A_I t_I^2 = 1$ and $F(\zeta)^q = \sum_I A_I t_I^2 |t_I|^{q-2}$.
Throughout, we exclude indices where $t_I = 0$ for any sum containing a negative power of $|t_I|$.
Let $\theta = \frac{2-q}{\ve_m} \in [0,1]$, recalling that $h_m \ve_m = 2$.
Then, we have
    \begin{equation}\label{eq:F-bound}
    1 \leq F(\zeta)^q \leq \Bigl(\sum_{t_I\ne0} A_It_I^2|t_I|^{-\ve_mh_m} \Bigr)^{\frac{\theta}{h_m}} = \Bigl( \sum_{t_I \neq 0} A_I \Bigr)^{\frac{\theta}{h_m}} \leq M_m^{\frac{\theta}{h_m}} \leq M_m^{\frac{1}{h_m}}.
    \end{equation}
    In particular, the computational estimates of Lemma~\ref{lemma:elementary-inequalities} imply that $1 \leq F(\zeta)^q < \frac{29}{16}$ for $m \geq 4$.
Also, combining the inequality $\sum_{t_I \neq 0} A_It_I^2 (|t_I|^{-\ve_m} )^{h_m}=\sum_{t_I \neq 0}A_I \leq M_m$ and Lemma~\ref{lemma:elementary-inequalities} gives
\begin{equation}\label{eq:weighted-moment}
    \sum_I A_It_I^2 (|t_I|^{q-2}-b_m^\theta)^2 \leq ( \tfrac{13}{32})^2, \qquad \text{ for } m \geq 4.
    \end{equation}
For any unit-length vector $v$, we can apply the Cauchy-Schwarz inequality to obtain
    \begin{align*}
        F(\zeta)^q \bigl|\la \nu_\zeta-\zeta,v\rg\bigr| &\leq \Bigl[ \sum_I A_I t_I^2 \bigl( |t_I|^{q-2}-F(\zeta)^q \bigr)^2 \Bigr]^{\frac{1}{2}} \Bigl[ \sum_I A_I \la b_I,v\rg^2 \Bigr]^{\frac{1}{2}} \\
        &= \Bigl[ \sum_I A_I t_I^2 \bigl(|t_I|^{q-2}-F(\zeta)^q \bigr)^2 \Bigr]^{\frac{1}{2}}.
    \end{align*}
Taking the supremum over all such unit-length vectors $v$, we obtain
\begin{equation}\label{eq:nu-error-variance}
    F(\zeta)^{2q}|\nu_\zeta-\zeta|^2 \leq \sum_I A_I t_I^2 (|t_I|^{q-2}-F(\zeta)^q)^2.
\end{equation}
 Since the $A_It_I^2$ are positive weights summing to $1$, the formula for $F(z)^q$ defines their weighted average of $|t_I|^{q-2}$.
    Consequently, for any $b$, 
    \[
    \sum_I A_I t_I^2 \bigl(|t_I|^{q-2}-b \bigr)^2 = \sum_I A_I t_I^2 \bigl(|t_I|^{q-2}-F(\zeta)^q \bigr)^2 + \bigl( F(\zeta)^q-b \bigr)^2.
    \]
    Specializing to $b = b_m^\theta$ as in Lemma~\ref{lemma:elementary-inequalities} and applying~\eqref{eq:weighted-moment}, we compute
    \[
    \sum_I A_I t_I^2 (|t_I|^{q-2}-F(\zeta)^q)^2 +(F(\zeta)^q-b_m^\theta)^2 \leq ( \tfrac{13}{32})^2.
    \]
    Now applying~\eqref{eq:nu-error-variance} proves that
    \begin{equation}\label{eq:scalar-short}
        F(\zeta)^{2q}|\nu_\zeta-\zeta|^2 + (F(\zeta)^q-b_m^\theta)^2 \leq ( \tfrac{13}{32})^2.
    \end{equation}
Next, we obtain similar estimates for $DF$.
Differentiating the definition of $F(z)^q$ at $\zeta$, we find
    \[
    F(\zeta)^{q-1} DF(\zeta) = \sum A_I t_I|t_I|^{q-2}b_I,
    \]
    and plugging in the definition of $\nu_\zeta$, we again obtain $\nu_\zeta:=\frac{DF(\zeta)}{F(\zeta)} =\frac1{F(\zeta)^q} \sum_I A_It_I|t_I|^{q-2}b_I$.
    Using $\zeta = \sum_I A_It_I b_I$ from the identity~\eqref{eqn:parseval-short} for $F(z)$, we arrive at
    \[
    \nu_\zeta-\zeta = \frac1{F(\zeta)^q} \sum_I A_It_I (|t_I|^{q-2}-F(\zeta)^q)b_I .
    \]
    For any $\xi, \eta$, we use this expression with $r_{\xi} := \nu_{\xi} - \xi$ and $r_{\eta} := \nu_{\eta} - \eta$.
    Euler's identity for $F$ implies
\[
    \la \nu_{\xi} , \xi \rg = \la \nu_{\eta}, \eta \rg = 1, \qquad \la r_{\xi}, \xi \rg = \la r_{\eta}, \eta \rg = 0.
\]
Consequently, we can apply Lemma~\ref{lemma:lots-of-letters} to the inequality~\eqref{eq:scalar-short}, with $\xi, \eta$ in place of $\zeta$.
This establishes the claimed inequalities~\eqref{eqn:r-xi-r-eta-inequalities} for $|r_{\xi}| , |r_{\eta}|$.

Finally, to prove the desired convexity $F^q - (q-1) |\cdot|^q$, we consider the Hessian 
    \[
    D^2 (F^q)(z) =q(q-1) \sum_I A_I|\langle b_I,z\rangle|^{q-2}b_I\otimes b_I \geq q(q-1)|z|^{q-2}\mr{Id},
    \]
    where this exists on the complement of finitely many hyperplanes.
    We can compute
    \[
    D^2((q-1)|z|^q) = q(q-1)|z|^{q-2}\mr{Id} + q(q-1)(q-2)|z|^{q-4}z\otimes z \leq q(q-1)|z|^{q-2}\mr{Id}.
    \]
    Therefore, $D^2(F^q-(q-1)|z|^q) \geq 0$ away from these hyperplanes.
    Restricting to any line, one can see that the derivative is non-decreasing, so $F^q-(q-1)|\cdot|^q$ is $C^1$ everywhere and convex.
\end{proof}

\begin{lemma}\label{lemma:two-plane-estimate}
Consider $m$-planes $T,S \in \bG(W,m)$ and let $\xi,\eta$ be oriented unit simple vectors, where $\xi$ spans $T$ and $\eta$ spans $S$, chosen so that $\la \xi, \eta \rg \geq 0$.
Then,
\begin{align*}
    K_F(T,S) &\geq 1- \la \nu_{\xi}, \eta \rg \la \nu_{\eta}, \xi \rg + (m - \textup{tr}(TS)) - (1 - \la \xi, \eta \rg^2) \\
    &\quad - \bigl[ \tfrac{3}{4} ( |r_{\xi}| + |r_{\eta}|) + |r_{\xi}| \, |r_{\eta}| \bigr] \, ( m- \textup{tr}(TS)).
\end{align*}
\end{lemma}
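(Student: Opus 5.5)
The plan is to compute $\textup{tr}(\Pi_T\Pi_S)$ exactly in a basis adapted to the pair $(T,S)$, and then to isolate the isotropic part from the perturbation coming from $r_\xi,r_\eta$.

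\textbf{Adapted basis and splitting.} By Jordan's principal-angle decomposition, choose an orthonormal family $e_1,\dots,e_m$ spanning $T$, orthonormal vectors $e_1',\dots,e_m'$ orthogonal to $T$, and angles $\theta_i\in[0,\frac{\pi}{2}]$ such that $f_i=\cos\theta_i\,e_i+\sin\theta_i\,e_i'$ is an orthonormal basis of $S$. (If $\theta_i=0$ the vector $e_i'$ plays no role, and each $e_i'$ with $\theta_i>0$ lies in $T^\perp\cap(T+S)$.) Set $\xi=e_1\wedge\dots\wedge e_m$ and $\eta=f_1\wedge\dots\wedge f_m$; reversing one frame if necessary gives the orientation condition, since $\la\xi,\eta\rg=\prod_i\cos\theta_i\ge0$. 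In this basis $m-\textup{tr}(TS)=\sum_i\sin^2\theta_i$. Write $\nu_\xi=\xi+r_\xi$ and $\nu_\eta=\eta+r_\eta$. By \eqref{eqn:nu-xi}, $\Pi_T=T+R_T$ with $(R_T)_{ab}=\la\iota_{e_a}r_\xi,\iota_{e_b}\xi\rg$, and similarly for $S$. Hence
\[
\textup{tr}(\Pi_T\Pi_S)=\textup{tr}(TS)+\textup{tr}(R_TS)+\textup{tr}(TR_S)+\textup{tr}(R_TR_S).
\]
The quantity $1-\la\nu_\xi,\eta\rg\la\nu_\eta,\xi\rg-(1-\la\xi,\eta\rg^2)$ expands to
\[
-\la\xi,\eta\rg\bigl(\la r_\xi,\eta\rg+\la r_\eta,\xi\rg\bigr)-\la r_\xi,\eta\rg\la r_\eta,\xi\rg .
\]
So the lemma is equivalent to showing that the linear and quadratic $r$-terms of $\textup{tr}(\Pi_T\Pi_S)$ agree with this expression up to an error bounded by $\bigl[\frac34(|r_\xi|+|r_\eta|)+|r_\xi||r_\eta|\bigr]\sum_i\sin^2\theta_i$.

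\textbf{Linear terms.} Expand $\eta$ in the basis $\{e_J\wedge e'_{J^c}\}$. The component with exactly $k$ primed factors is $\eta_k=\sum_{|J|=k}\bigl(\prod_{i\in J}\sin\theta_i\prod_{i\notin J}\cos\theta_i\bigr)\,e'_J\wedge e_{J^c}$. Compute $\textup{tr}(R_TS)=\sum_{a,b}\la\iota_{e_a}r_\xi,\iota_{e_b}\xi\rg\la Se_b,e_a\rg$; only the block of $S$ restricted to $T$ contributes, and that block is diagonal with entries $\cos^2\theta_i$. Using $\la r_\xi,\xi\rg=0$ (Euler's identity, as in Lemma~\ref{lemma:balanced-stress}), the $a=b$ terms give $\sum_a\cos^2\theta_a\la\iota_{e_a}r_\xi,\iota_{e_a}\xi\rg=-\sum_a\sin^2\theta_a\,\la\iota_{e_a}r_\xi,\iota_{e_a}\xi\rg$, whose absolute value is at most $|r_\xi|\sum_i\sin^2\theta_i$. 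I then compare this with $\la\xi,\eta\rg\la r_\xi,\eta\rg$, where $\la r_\xi,\eta\rg=\sum_{k\ge1}\la r_\xi,\eta_k\rg$. The degree-one part $\la r_\xi,\eta_1\rg$ is not paired with the trace term; it must be handled together with the corresponding part of $\textup{tr}(TR_S)$, which involves the mixed $e_i$–$e_i'$ entries. The plan is to verify that these first-order pieces cancel symmetrically, so that only the terms with $k\ge2$, together with a factor $1-\la\xi,\eta\rg$, survive. Each surviving term is bounded by Cauchy–Schwarz using $|\eta_k|^2\le\sum_{|J|=k}\prod_{i\in J}\sin^2\theta_i$ and $1-\la\xi,\eta\rg\le\frac12\sum_i\sin^2\theta_i+O(\sin^4)$. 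The coefficient $\frac34$ should emerge from summing $\sin^2\theta_a$ together with the contributions of $1-\prod_i\cos\theta_i$ and $\sum_{k\ge2}|\eta_k|$, all bounded by fractions of $\sum_i\sin^2\theta_i$.

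\textbf{Quadratic term.} The term $\textup{tr}(R_TR_S)-\la r_\xi,\eta\rg\la r_\eta,\xi\rg$ is handled the same way. Write $\textup{tr}(R_TR_S)=\sum_{a,b}\la\iota_{e_a}r_\xi,\iota_{e_b}\xi\rg\la\iota_{f_b}r_\eta,\iota_{f_a}\eta\rg$ after changing the second frame, and note that the operators $\sum_a\iota_{e_a}^*(\cdot)\wedge$-type maps act with norm at most one on the relevant subspaces. When $S=T$ this difference vanishes, because $\la r,\xi\rg=0$; its deviation is then controlled by $|r_\xi||r_\eta|\sum_i\sin^2\theta_i$.

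\textbf{Main obstacle.} I expect the hardest step to be the bookkeeping that produces the exact constant $\frac34$ in the linear error, rather than merely some constant $C_m$. Specifically, the trace terms and the $\la r,\eta\rg$ terms must be shown to cancel to first order in $\theta$ without losing factors of $m$. I would first do the computation for a single nonzero angle, where everything is explicit, then extend it by multilinearity in the $\sin\theta_i$, checking that the cross terms between different $i$ carry only products $\sin\theta_i\sin\theta_j$ that are bounded by $\frac12(\sin^2\theta_i+\sin^2\theta_j)$.
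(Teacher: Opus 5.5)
Your framework (principal angles, $\Pi_T=T+R_T$, expanding $\textup{tr}(\Pi_T\Pi_S)$, and reducing the lemma to estimates on the linear and quadratic $r$-terms) is the same as the paper's. However, the computation of the linear terms goes wrong at the decisive point.

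An entry $\la\iota_{e_a}r_\xi,\iota_{e_b}\xi\rg$ of $R_T$ vanishes when $e_b\perp T$, but not when $e_a\perp T$, since $r_\xi$ is an arbitrary $m$-vector. So $\textup{tr}(R_TS)$ is not governed by the $T$-block of $S$ alone: it also sees the mixed entries $\la e_i,Se_i'\rg=\sin\theta_i\cos\theta_i$. Conversely, the diagonal terms you analyze vanish individually, because $\la\iota_{e_i}r_\xi,\iota_{e_i}\xi\rg=\la r_\xi,e_i\wedge\iota_{e_i}\xi\rg=\la r_\xi,\xi\rg=0$. The exact formulas are
\[
\textup{tr}(R_TS)=\sum_i\sin\theta_i\cos\theta_i\,\la r_\xi,\xi_{ii}\rg,\qquad \textup{tr}(TR_S)=\sum_i\sin\theta_i\cos\theta_i\,\la r_\eta,\eta_{ii}\rg .
\]
Here $\xi_{ii}$ is $\xi$ with $e_i$ replaced by $e_i'$, and $\eta_{ii}$ is $\eta$ with $f_i$ replaced by $w_i:=\sin\theta_i e_i-\cos\theta_i e_i'$. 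The first-order cancellation of $\la\xi,\eta\rg\la r_\xi,\eta_1\rg$ happens against $\textup{tr}(R_TS)$ itself. The pairing you propose, with the mixed entries of $\textup{tr}(TR_S)$, cannot work: that term is linear in $r_\eta$, and $r_\xi,r_\eta$ are independent.

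More seriously, the proposal omits the quantitative step that is the actual content of the lemma. Write $c:=\la\xi,\eta\rg$ and $d:=\sum_i\sin^2\theta_i$, and use $\la r_\xi,\xi\rg=0$. Then the linear error is exactly $-\la r_\xi,g_\xi\rg-\la r_\eta,g_\eta\rg$, with $g_\xi:=\sum_i\sin\theta_i\cos\theta_i\,\xi_{ii}-c(\eta-c\xi)$ and $g_\eta$ defined symmetrically. You therefore need the global bound $|g_\xi|,|g_\eta|\leq\tfrac34 d$. The paper obtains it from:
\begin{enumerate}
\item the exact identity $|g_\xi|^2=d-\sum_i\sin^4\theta_i+c^2(1-c^2-2d)$;
\item the bounds $c^2\leq e^{-d}$ and $e^dc^2\geq 1-\sum_i\sin^4\theta_i$;
\item monotonicity of the resulting bound in $c^2$;
\item the inequality $d+e^{-d}-e^{-2d}-2de^{-d}\leq\tfrac{9}{16}d^2$.
\end{enumerate}
Expansions with $O(\sin^4\theta)$ remainders, or a one-angle computation extended by multilinearity, give at best a local estimate with unspecified constants. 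The lemma, however, must hold for all pairs of planes, with $d$ ranging up to $m$, and its constants $\tfrac34$ and $1$ are used numerically in Corollary~\ref{cor:kernel}.

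The quadratic term has the same problem: vanishing of the remainder at $S=T$ does not yield the bound $d\,|r_\xi|\,|r_\eta|$. What works is the explicit formula $\textup{tr}(R_TR_S)=\sum_{i,j}\sin\theta_i\sin\theta_j\la r_\xi,\xi_{ij}\rg\la r_\eta,\eta_{ji}\rg$. Here $\xi_{ij}$ is $\xi$ with $e_i$ replaced by $e_j'$, and $\eta_{ji}$ is $\eta$ with $f_j$ replaced by $w_i$. This is followed by an operator-norm argument:
\begin{enumerate}
\item After transporting the $S$-frame onto the $T$-frame, $\la r_\xi,\eta\rg\la r_\eta,\xi\rg$ becomes the rank-one positive form $(\eta-c\xi)\otimes(\eta-c\xi)$, of norm $1-c^2\leq d$.
\item This form vanishes on the off-diagonal planes $\textup{span}\{\xi_{ij},\xi_{ji}\}$, $i\neq j$, where the trace form has eigenvalues $\pm\sin\theta_i\sin\theta_j$.
\item On the complement, both forms lie between $0$ and $d\,I$, so their difference has norm at most $d$.
\end{enumerate}
Bounding the two pieces separately by Cauchy--Schwarz only gives $2d$, which is too weak for the stated inequality.
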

\begin{proof}
It suffices to work in a frame $\{ \xi,\eta \}$ for $T,S$ such that 
\[
\xi = e_1 \wedge \cdots \wedge e_m, \qquad v_i = \alpha_i e_i + \beta_i f_i, \qquad \eta = v_1 \wedge \cdots \wedge v_m,
\]
where $\alpha_i, \beta_i \geq 0$ and $\alpha_i^2+\beta_i^2=1$.
Thus, $c := \prod_i \alpha_i = \la \xi, \eta \rg$ and $d := \sum_i \beta_i^2 = m - \textup{tr}(TS)$.
Let
\[
\xi_{ij} := e_1 \wedge \cdots \wedge f_j \wedge \cdots \wedge e_m,
\]
i.e., $\xi$ with $e_i$ replaced by $f_j$.
We define $\eta_{ij}$ analogously using $w_j = \beta_j e_j - \alpha_j f_j$ in place of $v_j$.
Let
\[
g_{\xi} := \sum_i \alpha_i \beta_i \xi_{ii} - c( \eta - c\xi), \qquad g_{\eta} := \sum_i \alpha_i \beta_i \eta_{ii} - c(\xi - c \eta).
\]
We first claim that $|g_{\xi}|, |g_{\eta}| \leq \frac{3}{4}d$.
Indeed, by the orthogonality of the basis $\{ e_1, \dots, e_m , f_1, \dots, f_m \}$, we can use the inequality $1 - x \leq e^{-x}$ to bound
\begin{align}
    |g_{\xi}|^2 = |g_{\eta}|^2 &= d - \textstyle{ \sum_i} \beta_i^4 + c^2 ( 1 - c^2 - 2d), \label{eqn:g-xi-g-eta} \\
    c^2 &= \textstyle{ \prod_i} (1 - \beta_i^2) \leq e^{-d}, \notag \\
    e^d c^2 &= \textstyle{\prod_i} e^{\beta_i^2} (1 - \beta_i^2) \geq \textstyle{ \prod_i} (1 - \beta_i^4) \geq  1 - \textstyle{\sum_i} \beta_i^4. \notag
\end{align}
Hence, the inequality~\eqref{eqn:g-xi-g-eta} gives 
\[
|g_{\xi}|^2 \leq d-1 + (e^d+1 - 2d) c^2 - c^4.
\]
The right-hand side is increasing as a function of $c^2$ on $c^2 \in [0, e^{-d}]$, since $e^d+1-2d - 2 e^{-d} \geq 0$.
We therefore obtain 
\[
|g_{\xi}|^2 \leq  d + e^{-d} - e^{-2d} - 2d e^{-d} \leq \tfrac{9}{16} d^2.
\]
The latter inequality is obtained by observing that $\phi(d) := \frac{9}{16} d^2 - d - e^{-d} + e^{-2d} + 2de^{-d}$ satisfies $\phi(0) = \phi'(0) = 0$ and $\phi''(d) \geq 2 ( e^{-d} - \frac{3}{4})^2 \geq 0$, due to $d \geq 1 - e^{-d}$.

We expand the definition $(\Pi_T)_{ab} = \la \iota_{e_a} \nu_{\xi}, \iota_{e_b} \xi \rg$ in the principal frames $\{ e_i\}_{i=1}^m, \{ f_j \}_{j=1}^m$ to find
\begin{equation}\label{eqn:KF(T,S)-d-c}
    K_F(T,S) = d - c \la r_{\xi}, \eta \rg - c \la r_{\eta}, \xi \rg - \la r_{\xi}, g_{\xi} \rg - \la r_{\eta}, g_{\eta} \rg - \sum_{i,j} \beta_i \beta_j \la r_{\xi}, \xi_{ij} \rg \la r_{\eta}, \eta_{ji} \rg.
\end{equation}
Since $\nu_{\xi} = \xi + r_{\xi}$ and $\nu_{\eta} = \eta + r_{\eta}$, we obtain
\[
1 - \la \nu_{\xi}, \eta \rg \la \nu_{\eta}, \xi \rg - (1-c^2) = - c \la r_{\xi}, \eta \rg - c \la r_{\eta}, \xi \rg - \la r_{\xi}, \eta \rg \la r_{\eta}, \xi \rg.
\]
This allows us to simplify~\eqref{eqn:KF(T,S)-d-c} into
\begin{align*}
    K_F(T,S) &= 1 - \la \nu_{\xi}, \eta \rg \la \nu_{\eta}, \xi \rg + d - (1-c^2) - \la r_{\xi}, g_{\xi} \rg - \la r_{\eta}, g_{\eta} \rg \\
    & \quad + \la r_{\xi}, \eta \rg \la r_{\eta}, \xi \rg - \textstyle{\sum_{ij}} \beta_i \beta_j \la r_{\xi}, \xi_{ij} \rg \la r_{\eta}, \eta_{ji} \rg.
\end{align*}
Using the bounds $|g_{\xi}|, |g_{\eta}| \leq \frac{3}{4}d$, we see that the claimed inequality for $K_F(T,S)$ is reduced to showing that the term $\cR$ of the second line above is bounded by $d \, |r_{\xi}| \, |r_{\eta}|$.
To prove this, we identify the exterior basis associated with $(v_i, w_i)$ with the corresponding exterior basis associated with $(e_i, f_i)$.
Using $\la r_{\xi}, \xi \rg = \la r_{\eta}, \xi \rg = 0$ after the identification, while $\eta = c \xi + (\eta - c \xi)$, we express the first term of $\cR$ as the bilinear form
\[
(r_{\xi}, r_{\eta}) \mapsto \la r_{\xi}, \eta - c \xi \rg \la r_{\eta}, \eta - c \xi \rg.
\]
The associated rank-one operator is positive semidefinite, with norm $| \eta - c \xi|^2 = 1 - c^2 \leq d$.
Moreover, the second term has matrix $\begin{psmallmatrix}
    0 & \beta_i \beta_j \\
    \beta_i \beta_j & 0
\end{psmallmatrix}$ on each off-diagonal two-dimensional space $\textup{span} \{ \xi_{ij}, \xi_{ji} \}_{i \neq j}$, hence its eigenvalues are $\pm \beta_i \beta_j$.
Since $|\beta_i \beta_j| \leq \frac{1}{2} (\beta_i^2+\beta_j^2) \leq \frac{1}{2}d$, its norm is at most $d$.
Moreover, the previous rank-one form $(v,w) \mapsto \la v, \eta-c \xi \rg \la w, \eta - c \xi \rg$ vanishes on these spaces, since $\eta - c \xi$ has no off-diagonal component there.
Finally, on the orthogonal complement of these spaces, the second form $B$ is diagonal and positive semidefinite, with non-zero eigenvalues $\beta_i^2$.
Thus, again $0 \leq B \leq dI$, and alos $0 \leq (\eta - c \xi ) \otimes (\eta - c \xi) \leq dI$.
Therefore, the difference of these expressions again has norm at most $d$, and $|\cR| \leq d \, |r_{\xi}| \, |r_{\eta}|$.
This completes the proof.
\end{proof}

\begin{corollary}\label{cor:kernel}
Let $F$ be an even one-homogeneous  $C^1$ norm on $\bigwedge^m W$ such that $F^q - (q-1) |\cdot|^q$ is convex.
Moreover, suppose that, for some $q \in (1,2]$ and $\sigma>0$, we have $q - 1 - \tfrac{3}{2} \sigma - \sigma^2\geq 0$, $F(\zeta)^q \in [1,\Lambda]$ and, for every pair of unit simple vectors $\xi,\eta$,
\[
|r_{\xi}| + |r_{\eta}| \leq 2 \sigma ( F(\xi) F(\eta))^{- \frac{q}{2}}, \qquad |r_{\xi}| \, |r_{\eta}| \leq \sigma^2 \, (F(\xi) F(\eta))^{- \frac{q}{2}}.
\]
Then, for every pair of $m$-planes $T,S$, it holds that 
\[
K_F(T,S) \geq \min \{ \Lambda^{-1} (q - 1 - \tfrac{3}{2} \sigma - \sigma^2), 1 - \tfrac{3}{2} \sigma - 2 \sigma^2 \} \, (m - \textup{tr}(TS)).
\]
\end{corollary}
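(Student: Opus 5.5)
We start from Lemma~\ref{lemma:two-plane-estimate}. Writing $d := m - \textup{tr}(TS)$ and $c := \la \xi,\eta\rg \geq 0$, it gives
\[
K_F(T,S) \geq \bigl[1 - \la \nu_\xi,\eta\rg\la\nu_\eta,\xi\rg\bigr] + d - (1-c^2) - \bigl[\tfrac34(|r_\xi|+|r_\eta|) + |r_\xi||r_\eta|\bigr] d .
\]
With the hypotheses, the last bracket is at most $(\tfrac32\sigma + \sigma^2)(F(\xi)F(\eta))^{-q/2} \leq \tfrac32\sigma+\sigma^2$, using $F^q \geq 1$. So the whole proof reduces to a lower bound for the scalar quantity $1 - \la\nu_\xi,\eta\rg\la\nu_\eta,\xi\rg$ in terms of $1-c^2$, together with the elementary inequality $1-c^2 \leq d$ (which follows from $c^2 = \prod(1-\beta_i^2) \geq 1 - \sum\beta_i^2$).

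The key step is to use the convexity of $G := F^q - (q-1)|\cdot|^q$ to show
\[
1 - \la \nu_\xi,\eta\rg\la\nu_\eta,\xi\rg \geq \kappa\,(1-c^2)\,(F(\xi)F(\eta))^{-q/2}
\]
for some $\kappa$ close to $q-1$. Convexity of $F^q$ with modulus $(q-1)|\cdot|^q$ gives the monotonicity inequality
\[
\la D(F^q)(\xi) - D(F^q)(\eta), \xi - t\eta\rg \geq (q-1)\la D|\cdot|^q(\xi) - D|\cdot|^q(\eta),\xi - t\eta\rg ,
\]
with a suitable scaling $t>0$ applied to $\eta$ (here one uses the $q$-homogeneity of $F^q$ and $|\cdot|^q$). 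Since $D(F^q)(\zeta) = qF(\zeta)^q\nu_\zeta$ and $\la\nu_\zeta,\zeta\rg = 1$, the left side becomes a combination of $F(\xi)^q$, $F(\eta)^q$, $\la\nu_\xi,\eta\rg$ and $\la\nu_\eta,\xi\rg$. The right side is explicitly $\asymp (q-1)(1-c)$. Choosing $t = (F(\xi)/F(\eta))^{q/2}$ balances the two terms. The AM--GM inequality $\la\nu_\xi,\eta\rg\la\nu_\eta,\xi\rg \leq \tfrac14(\cdot+\cdot)^2$ then converts the sum bound into a product bound, and the $(F(\xi)F(\eta))^{-q/2}$ factor appears naturally. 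We then use $1-c \geq \tfrac12(1-c^2)$ with $c\in[0,1]$; we may need to keep track of exact constants here, or instead use the variant with $1-c^2$ directly. This is the step I expect to be the main obstacle: getting exactly $q-1$ (rather than a fraction of it) requires the right choice of $t$ and careful handling of the sign of $\la\nu_\xi,\eta\rg$. If that fails, I would instead bound $\la\nu_\xi,\eta\rg = c + \la r_\xi,\eta - c\xi\rg$ and absorb the error through $|r_\xi|\sqrt{1-c^2}$.

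Finally, we combine and split into cases, writing $\Lambda_0 := (F(\xi)F(\eta))^{q/2} \in [1,\Lambda]$:
\[
K_F \geq d - (1-c^2)\bigl(1 - (q-1)\Lambda_0^{-1}\bigr) - (\tfrac32\sigma+\sigma^2)\Lambda_0^{-1} d .
\]
Since $1 - (q-1)\Lambda_0^{-1} \geq 0$ and $1-c^2 \leq d$, this yields $K_F \geq \Lambda_0^{-1}(q-1-\tfrac32\sigma-\sigma^2)\,d \geq \Lambda^{-1}(q-1-\tfrac32\sigma-\sigma^2)\,d$, which is the first entry of the minimum. When $\la\nu_\xi,\eta\rg\la\nu_\eta,\xi\rg \leq 0$, or when $c$ is small (so that $1-c^2$ is far from $d$ in the opposite direction), we discard the convexity bound and use instead $1-\la\nu_\xi,\eta\rg\la\nu_\eta,\xi\rg \geq 1 - c^2 - |r_\xi||r_\eta|(1-c^2) - c(|r_\xi|+|r_\eta|)\sqrt{1-c^2}$. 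Together with $1-c^2 \leq d$, this gives the second entry $1-\tfrac32\sigma-2\sigma^2$. Taking the minimum over the two cases proves the corollary.
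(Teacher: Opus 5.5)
Your overall architecture matches the paper's: Lemma~\ref{lemma:two-plane-estimate}, the bracket bounded by $(\tfrac32\sigma+\sigma^2)(F(\xi)F(\eta))^{-q/2}$, convexity plus AM--GM in the main case, and $1-c^2\le d$. The gap is in the complementary case, which does not give the stated constant. Write $P:=\langle\nu_\xi,\eta\rangle=c+\langle r_\xi,\eta-c\xi\rangle$ and $Q:=\langle\nu_\eta,\xi\rangle=c+\langle r_\eta,\xi-c\eta\rangle$. Your bound $1-PQ\ge 1-c^2-|r_\xi||r_\eta|(1-c^2)-c(|r_\xi|+|r_\eta|)\sqrt{1-c^2}$ discards all sign information. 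Its linear term is of order $\sigma\sqrt{1-c^2}$, not $\sigma^2 d$, and ``$c$ small'' is never quantified. Suppose you exploit the constraints $c<|r_\xi|\sqrt{1-c^2}$ and $c<|r_\eta|\sqrt{1-c^2}$ that hold when both pairings are negative. The linear term is then only bounded by $2|r_\xi||r_\eta|(1-c^2)$, which leads to $1-\tfrac32\sigma-4\sigma^2$ rather than $1-\tfrac32\sigma-2\sigma^2$. At $\sigma=\tfrac{13}{32}$, the value used in Proposition~\ref{prop:kernel}, the former is negative while the latter equals $\tfrac{31}{512}$. The missing idea is to split by signs, not by the size of $c$:
\begin{enumerate}[(i)]
\item If $P,Q>0$, use convexity.
\item If $P,Q$ have opposite signs (or one vanishes), then $PQ\le0$ and $1-PQ\ge1$ trivially.
\item If both are negative, then $c\ge0$ gives $|P|=-c-\langle r_\xi,\eta-c\xi\rangle\le|r_\xi|\sqrt{1-c^2}$, and likewise $|Q|\le|r_\eta|\sqrt{1-c^2}$. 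Hence $PQ\le|r_\xi||r_\eta|(1-c^2)$, with no linear term.
\end{enumerate}
In cases (ii) and (iii), $1-PQ+d-(1-c^2)\ge(1-|r_\xi||r_\eta|)d$, and subtracting the bracket gives exactly $1-\tfrac32\sigma-2\sigma^2$.

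Your convexity case also needs repair in three places. First, the monotonicity inequality must compare the gradients at $\xi$ and $t\eta$; the second gradient carries a factor $t^{q-1}$. This gives $F(\xi)^q(1-tP)+t^qF(\eta)^q(1-Q/t)\ge(q-1)\bigl[1+t^q-c(t+t^{q-1})\bigr]$. AM--GM applied to $(tP)(Q/t)$ produces a bound below $1$ only when the two weights coincide, i.e.\ $t=F(\xi)/F(\eta)$, not $(F(\xi)/F(\eta))^{q/2}$. This is the paper's normalization $\xi/F(\xi)$, $\eta/F(\eta)$. With that choice, $1+t^q\ge t+t^{q-1}\ge2t^{q/2}$ yields $tP+Q/t\le2\bigl(1-\gamma(1-c)\bigr)$ for $\gamma:=(q-1)(F(\xi)F(\eta))^{-q/2}\le1$. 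Second, this converts into $PQ\le(1-\gamma(1-c))^2$ only when $tP+Q/t\ge0$, which is exactly why the both-negative configuration must be treated separately. Third, avoid $1-c\ge\tfrac12(1-c^2)$, which halves the constant. Instead use $1-(1-\gamma(1-c))^2=\gamma(1-c)(2-\gamma(1-c))\ge\gamma(1-c^2)$, valid since $\gamma\le1$, which recovers exactly $q-1$. With these repairs your first case is a symmetrized version of the paper's. The paper applies the supporting-hyperplane inequality for $F^q-(q-1)|\cdot|^q$ separately at $\xi/F(\xi)$ and $\eta/F(\eta)$ and multiplies the two resulting upper bounds.
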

\begin{proof}
We fix unit simple vectors $\xi, \eta$ spanning $T,S$, oriented so that $\la \xi, \eta \rg \geq 0$.
Then, $1 - \la \xi, \eta \rg^2 \leq m - \textup{tr}(TS)$ and $\Lambda^{-1} \leq (F(\xi) F(\eta))^{- \frac{q}{2}} \leq 1$.
Then, applying Lemma~\ref{lemma:two-plane-estimate}, we obtain
\begin{equation}\label{eqn:KF(T,S)-inequality}
    \begin{split}
    K_F(T,S) &\geq 1 - \la \nu_{\xi}, \eta \rg \la \nu_{\eta}, \xi \rg + ( m - \textup{tr}(TS)) - (1 - \la \xi, \eta \rg^2) \\
    & \quad - \bigl[ \tfrac{3}{4} ( |r_{\xi}| + |r_{\eta}|) + |r_{\xi}| \, |r_{\eta}| \bigr] \, ( m - \textup{tr}(TS)).
    \end{split}
\end{equation}
Suppose first that $\la \nu_{\xi}, \eta \rg > 0$ and $\la \nu_{\eta}, \xi \rg> 0$.
Then, applying the convexity of $F^q - (q-1) |\cdot|^q$ at the vectors $\frac{\xi}{F(\xi)}$ and $\frac{\eta}{F(\eta)}$, we obtain
\[
1 - F(\eta)^{-1} F(\xi) \la \nu_{\xi}, \eta \rg \geq (q-1) F(\xi)^{1-q} F(\eta)^{-1} (1 - \la \xi, \eta \rg)
\]
as well as the symmetric inequality under $\xi \leftrightarrow \eta$.
Multiplying the corresponding upper bounds and applying the arithmetic-geometric inequality, we obtain
\begin{align*}
    & 1 - \la \nu_{\xi}, \eta \rg \la \nu_{\eta}, \xi \rg \geq (q-1) ( F(\xi) F(\eta))^{- \frac{q}{2}} (1 - \la \xi, \eta \rg^2) \\
    & \implies 1 - \la \nu_{\xi}, \eta \rg \la \nu_{\eta}, \xi \rg + (m - \textup{tr}(TS)) - (1 - \la \xi, \eta \rg^2) \geq (q-1) (F(\xi) F(\eta))^{- \frac{q}{2}} ( m- \textup{tr}(TS)).
\end{align*}
Using the assume bounds on the vectors $r_{\xi}, r_{\eta}$ in the inequality~\eqref{eqn:KF(T,S)-inequality}, we obtain
\begin{align*}
    K_F(T,S) &\geq (F(\xi) F(\eta))^{- \frac{q}{2}} (q - 1 - \tfrac{3}{2} \sigma - \sigma^2) (m - \textup{tr}(TS)) \\
    &\geq \Lambda^{-1} ( q - 1 - \tfrac{3}{2} \sigma - \sigma^2) ( m - \textup{tr}(TS)).
\end{align*}
Now, suppose that the two pairings are not both positive.
If they have opposite signs, their product is non-positive.
If both are negative, then using $\la r_{\xi}, \xi \rg = \la r_{\eta}, \eta \rg = 0$, we obtain
\[
\la \nu_{\xi}, \eta \rg \la \nu_{\eta}, \xi \rg \leq |r_{\xi}| \, |r_{\eta}| ( 1 - \la \xi, \eta \rg^2), \qquad 1 - \la \xi, \eta \rg^2 \leq \min \{ 1 , m - \textup{tr}(TS) \}.
\]
Thus, we can combine the bounds in both cases to obtain
\[
1 - \la \nu_{\xi}, \eta \rg \la \nu_{\eta}, \xi \rg + (m - \textup{tr}(TS)) - (1 - \la \xi, \eta \rg^2) \geq (1 - |r_{\xi}| \, |r_{\eta}|) \, ( m - \textup{tr}(TS)).
\]
Using the bound~\eqref{eqn:KF(T,S)-inequality} once more, together with $(F(\xi) F(\eta))^{- \frac{q}{2}} \leq 1$, we obtain $K_F(T,S) \geq (1 - \frac{3}{2} \sigma - 2 \sigma^2) ( m - \textup{tr}(TS))$.
The assertion follows from these two bounds on $K_F(T,S)$.
\end{proof}

\begin{proof}[Proof of Proposition~\ref{prop:kernel}]
    Fix $T_0, S_0 \in \bG(N,m)$ for $m \leq N/2$ and $W := T_0 + S_0$. We may project the basis $e_i$ to $a_i := \pi_W e_i$ which is a spanning set which satisfies $\sum a_i \otimes a_i =\mr{Id}\vert_W$ since $e_i$ is an orthonormal basis.     
    Throughout, we ignore all $i$ where $a_i = 0$. 
     Consider the function
    \[
    \Phi(Q) := \tfrac{2}{q}\sum_i (a_i^t Qa_i)^{\frac{q}{2}} - \log \det Q \qquad \text{for }Q \in \mr{Sym}^+(W).
    \]
    Since $\sum_i(a_i^t Qa_i)^{\frac{q}{2}} \geq \left(\sum_i a_i^t Qa_i\right)^{\frac{q}{2}} =(\on{tr}Q)^{\frac{q}{2}}$, this function goes to $+\infty$ as $\det Q \to 0$ or $\|Q\| \to \infty$; thus, it has a minimizer. 
    Differentiating in a symmetric direction $H$ shows
    \[
    D\Phi(Q)[H] = \sum_i(a_i^t Qa_i)^{ \frac{q}{2} -1}a_i^t Ha_i -\operatorname{tr}(Q^{-1}H)
    \]
    which shows that at the minimum $Q$, we have $\sum_i(a_i^t Qa_i)^{ \frac{q}{2} -1}a_i\otimes a_i=Q^{-1}$.

    We now define a useful change of variables. 
    Let $L:=Q^{\frac{1}{2}}$ and define the frame
    \[
    La_i=\lambda_i^{\frac{1}{q}}u_i,\qquad |u_i|=1,\qquad \lambda_i:=|La_i|^q
    \]
    which satisfies $\sum \lambda_i u_i \otimes u_i = \textup{Id}|_W$, with $\sum_i \lambda_i = \dim W$ and $\lambda_i \in (0,1]$.
    For every multi-index $I = (i_1 < \cdots < i_m)$, let $A_I := \lambda_{i_1} \cdots \lambda_{i_m}$ and $b_I := u_{i_1}\wedge \cdots \wedge u_{i_m}$.
    For $F_0$ the original Pl\"ucker norm, we define $F(z) := F_0((\bigwedge^m L)z)$.
    We then compute
    \begin{equation}\label{eqn:parseval-short}
    F(z)^q = \sum_I A_I| \langle b_I,z\rangle|^q, \qquad z=\sum_I A_I\langle b_I,z\rangle b_I, \qquad |z|^2=\sum_I A_I|\langle b_I,z\rangle|^2
    \end{equation}
     where $|b_I| \leq 1$ and $\sum_I A_I \leq \frac{(2m)^m}{m!} =: M_m$.
    We define $T := L^{-1}T_0$ and $S:= L^{-1}S_0$, and let $\xi, \eta$ be vectors representing the $m$-planes $T,S$, namely $\xi = \bigwedge_i e_i$ and $\eta = \bigwedge_i v_i$ where $T = \textup{span} (e_i)$ and $S = \textup{span}(v_i)$.
    Thus, we can place ourselves in the setting of Lemma~\ref{lemma:balanced-stress} and obtain the inequalities~\eqref{eqn:r-xi-r-eta-inequalities} for $r_{\xi}, r_{\eta}$, with the notation established therein.
    This, in turn, allows us to apply Lemma~\ref{lemma:two-plane-estimate} and Corollary~\ref{cor:kernel}, with $\Lambda = \frac{29}{16}$ and $\sigma = \frac{13}{32}$, and obtain
    \[
    K_F(T,S) \geq \min \{ \tfrac{16}{29} \bigl(q - 1 - \tfrac{3}{2} \cdot \tfrac{13}{32} - ( \tfrac{13}{32})^2 \bigr) , 1 - \tfrac{3}{2} \cdot \tfrac{13}{32} - 2 ( \tfrac{13}{32})^2 \} \, ( m - \textup{tr}(TS)).
    \]
    We note that $1 - \tfrac{3}{2} \cdot \tfrac{13}{32} - 2 ( \tfrac{13}{32})^2 = \frac{31}{512}$ and $1 + \tfrac{3}{2} \cdot \tfrac{13}{32} + ( \tfrac{13}{32})^2 = \frac{1817}{1024}$.
    Thus, the above minimum is at least $10^{-3}$ for all $q \geq \frac{9}{5}$, hence $K_F(T,S) \geq 10^{-3} ( m- \textup{tr}(TS))$.
    
    Finally, we undo the original change of coordinates $T=L^{-1}T_0$ and $S=L^{-1}S_0$.
    For $|v|=1$, we have $\sum_i\lambda_i|\la u_i,  v\rg|^2 =1$ and $|Lv|^2 = \sum_i \lambda_i|\la u_i, v\rg |^2\lambda_i^{(2-q)/q}$.
    Since $\lambda_i|\la u_i, v \rg|^2\leq \lambda_i\leq 1$,
    \[
    |Lv|^2
 =\sum_i\lambda_i^{(2-q)/q}
        \lambda_i|\langle u_i,v\rangle|^2
 \leq\sum_i\lambda_i|\langle u_i,v\rangle|^2
 =1, \quad 
    \mbox{and} \quad   |Lv|^2
 =\sum_i (\lambda_i|\la u_i, v \rg|^2)^\frac{2}{q} \geq N^{1 - \frac{2}{q}},
    \]
    by the identities~\eqref{eqn:parseval-short}.
    Therefore, $N^{1 - \frac{2}{q}} \leq s_{\min}(L)^2 \leq s_{\max}(L)^2 \leq 1$ which shows $\kappa(L)^2\leq N^{(2-q)/q}$, where $\kappa(L)$ denotes the ratio of the largest to the smallest singular value of $L$.
    Furthermore, for any invertible $A$ and $m$-planes $P,Q$, the singular value ratio $\kappa(A)$ bounds the distance between the plane projections, yielding
    \[
    m-\mr{tr}(\pi_{AP}\pi_{AQ}) \geq \kappa(A)^{-2} (m-\mr{tr}(PQ)).
    \]
    Because $\Pi_T$ transforms by conjugation, i.e.~$\Pi^F_T =L \Pi^{F_0}_{T_0}L^{-1}$ and $\Pi^F_S = L \Pi^{F_0}_{S_0} L^{-1}$, we have
    \[
    K_F(T,S) = K_{F_0}(T_0,S_0) \geq \tfrac{1}{270} N^{1-\frac{2}{q}} \bigl(m-\operatorname{tr}(T_0S_0)\bigr)
    \]
    by the chain rule.
    This establishes our claim and completes the proof.
\end{proof}

\begin{proof}[Proof of Theorem~\ref{thm:ell-p-on-G(N,m)}]
For $q\leq2$ and Euclidean unit Pl\"ucker vectors,
$\Psi_q(T)=\|\xi_T\|_{\ell^q}\geq1$.
By~\eqref{eqn:normalized-scalar-kernel},
Proposition~\ref{prop:kernel}, and
$\|T-S\|^2=2(m-\operatorname{tr}(TS))$, we obtain
\[
 \langle B_{\Psi_q}(T),B_{\Psi_q^*}(S^\perp)\rangle
 \geq C_{N,m,q} \|T-S\|^2.
\]
This proves USAC in every stated dimension and codimension.
\end{proof}

\begin{remark}\label{rmk:when-usac-fails}
For $2 \leq m \leq N-2$ and every $q>4$, the $\ell^q$ anisotropy $\Psi_q$ does not satisfy the scalar atomic conditions.
Concretely, there are families of distinct $m$-planes $T_t, S_t \in \bG(N,m)$ converging to the same plane as $t \downarrow 0$, such that
\begin{equation}\label{eqn:sac-failure}
m - \textup{tr}(\Pi_{T_t} \Pi_{S_t}) = \frac{\la B_{\Psi_q}(T_t), B_{\Psi_q^*}(S_t^\perp)\rg}{\Psi_q(T_t)\Psi_q(S_t)} = -\frac{q-1}{4}\,t^4+o(t^4), \qquad \text{where } \; \Pi_P := \frac{B_{\Psi_q}(P)}{\Psi_q(P)}.
\end{equation}
For simplicity, we construct examples on $\bG(4,2)$; these extend to $\bG(N,2)$, by inclusion, and then to $\bG(N,m)$, via $( \tilde{T}_t, \tilde{S}_t) := (T_t \oplus E, S_t \oplus E)$ with $E := \textup{span} \{ e_5, \dots, e_{m+2} \}$.
For $A \in \bR^{2 \times 2}$, the non-parametric graph integrand is $F_q(A) := ( 1 + \sum_{i,j=1}^2 |A_{ij}|^q + |\det A|^q)^{\frac{1}{q}}$, and $G_A := \frac{D F_q(A)}{F_q(A)}$ has
\begin{align}
&\Pi_{\textup{graph} \, A} = \begin{pmatrix}
    I_2 - A^t G_A \\
    G_A
\end{pmatrix} \begin{pmatrix}
    I_2 & A^t
\end{pmatrix}, \notag \\
&2 - \textup{tr}(\Pi_{\textup{graph} \, A} \Pi_{\textup{graph} \, B}) = \textup{tr} ( (A-B)^t (G_A - G_B) ) + \textup{tr} ( (A-B)^t G_B (A-B)^t G_A). \label{eqn:m-tr-sac}
\end{align}
Consequently, we can define planes $T_t = \textup{graph} \, A_t$ and $S_t = \textup{graph} \, B_t$, for $t \in (0,\frac{1}{2})$, where
\[
A_t =  \begin{pmatrix}
    \sqrt{1-2t^2} & t \\
    - 2t & \sqrt{1-2t^2}
\end{pmatrix}, \qquad B_t = \begin{pmatrix}
    \sqrt{1-2t^2} & 2t \\
    -t & \sqrt{1-2t^2}
\end{pmatrix}.
\]
These planes are distinct for every $t>0$ and converge to the graph of $I_2$ as $t \downarrow 0$, and
\begin{align*}
G_{A_t} &= F_q(A_t)^{-q} \begin{pmatrix}
    (1-2t^2)^{\frac{1}{2}} + (1-2t^2)^{\frac{q-1}{2}} & 2t + t^{q-1} \\
    - (t+2^{q-1} t^{q-1}) & (1-2t^2)^{\frac{1}{2}} + (1-2t^2)^{\frac{q-1}{2}}
\end{pmatrix}, \\
G_{B_t} &= F_q(B_t)^{-q} \begin{pmatrix}
    (1-2t^2)^{\frac{1}{2}} + (1-2t^2)^{\frac{q-1}{2}} & t+2^{q-1} t^{q-1}  \\
    - (2t + t^{q-1}) & (1-2t^2)^{\frac{1}{2}} + (1-2t^2)^{\frac{q-1}{2}}
\end{pmatrix},
\end{align*}
where $F_q(A_t)^q = F_q(B_t)^q = 2 + 2 (1-2t^2)^{\frac{q}{2}} + (1+2^q) t^q$.
Moreover, $A_t - B_t = -t \begin{psmallmatrix}
    0 & 1 \\ 1 & 0
\end{psmallmatrix}$, so substituting into the expression~\eqref{eqn:m-tr-sac} and expanding terms gives $2 - \textup{tr}( \Pi_{T_t} \Pi_{S_t}) = - \frac{q-1}{4} t^4 + o(t^4)$ for $t \downarrow 0$.
\end{remark}

\subsection{Families of USAC integrands}\label{sec:axisymmetric}
We now construct axisymmetric USAC integrands and prove Theorem~\ref{thm:axisymmetric-usac}.
In Corollary~\ref{cor:ha-integrands}, we show that these integrands are strictly separated from any ellipsoidal area integrand.

\begin{proof}[Proof of Theorem~\ref{thm:axisymmetric-usac}]

Let us recall our setup: we define an anisotropy $\Psi_h \in C^2(\bG(N,m))$ by $\Psi_h(T) := h( |Te|, |T^{\perp}e|)$, where $h \in C^2 (\bR^2 \setminus \{ 0 \}, \bR_+)$.
We set $H(\theta) := h(\cos(\theta), \sin(\theta))$ for $0 \leq \theta \leq \frac{\pi}{2}$, and will show that $\Psi_h$ satisfies USAC if and only if
\begin{equation}\label{eqn:axisymmetric-usac-criterion}
H(\theta)+H''(\theta)>0 \quad\text{for every }\theta\in[0,\tfrac{\pi}{2}],
\end{equation}
in which case it also supports a Michael-Simon inequality.
Let $W=e^\perp$.
For $T\in\bG(N,m)$ such that $|Te|=\cos(\theta)$ and $|T^\perp e|=\sin(\theta) $, write $T = \bR t \oplus Z$ and $t = \cos(\theta) \, e + \sin(\theta) \, r$, where $r\in W$ is a unit vector and $Z\subset W\cap r^\perp$ has dimension $m-1$.
The normal direction is $n=-\sin(\theta)  e+\cos(\theta) r$.
The anisotropic first variation identity~\eqref{eqn:B-psi-identity} computes the stress tensor as
\begin{equation}\label{eqn:axisymmetric-stress}
    B_{\Psi_h}(T)
    =H(\theta)T+H'(\theta)n\otimes t.
\end{equation}
Therefore, $\Pi_T:=\frac{B_{\Psi_h}(T)^t}{\Psi_h(T)} = T+\frac{H'(\theta)}{H(\theta)} t\otimes n$.
Since $B_{\Psi_h^*}(S^\perp)=\Psi_h(S)(I-\Pi_S)$, we have
\begin{equation}\label{eqn:axisymmetric-kernel}
    \frac{\la B_{\Psi_h}(T),B_{\Psi_h^*}(S^\perp)\rg}
    {\Psi_h(T)\Psi_h(S)}
    =
    m-\mr{tr}(\Pi_T\Pi_S).
\end{equation}
First, we suppose that $H + H'' > 0$ and we will show that $\mr{tr}(\Pi_T \Pi_S) \leq m$ with equality if and only if $T = S$.
Decompose $S = \bR u \oplus Y$ and use $\varphi$ as the associated angle function and unit vector $r'$ analogous to that for $T$.
We use subscripts to denote partial derivatives of $h$ and prime to denote derivatives with respect to $\theta$.
From Euler's identity, $\cos(\theta) h_1 + \sin(\theta)  h_2 = H(\theta)$ using, we record the following useful equations which we will repeatedly use:
\begin{equation}\label{eqn:DerivativeEuler}
\begin{aligned}
h_1(\theta)&=\cos(\theta)\,H(\theta)-\sin(\theta) \,H'(\theta),&
    h_2(\theta)&=\sin(\theta) \,H(\theta)+\cos(\theta)\,H'(\theta),\\
    h_1'(\theta)&=-\sin(\theta) \,[H(\theta)+H''(\theta)],&
    h_2'(\theta)&=\cos(\theta)\,[H(\theta)+H''(\theta)].
\end{aligned}
\end{equation}
We compute
\begin{align*}
    \Pi_T &= t\otimes \frac{h_1(\theta)e+h_2(\theta)r}{H(\theta)} +\pi_Z \qquad \text{and}\qquad \Pi_S = u \otimes \frac{h_1(\varphi)e + h_2(\varphi)r'}{H(\varphi)} + \pi_Y, \\
    \Pi_T\Pi_S &= \left( t\otimes \frac{h_1(\theta)e+h_2(\theta)r}{H(\theta)} \right) \left( u\otimes \frac{h_1(\varphi)e+h_2(\varphi)r'}{H(\varphi)} \right) \\ 
    &\quad + \left( t\otimes \frac{h_1(\theta)e+h_2(\theta)r}{H(\theta)} \right)\pi_Y  + \pi_Z \left( u\otimes \frac{h_1(\varphi)e+h_2(\varphi)r'}{H(\varphi)} \right) +\pi_Z\pi_Y.
\end{align*}
Since $Z \subset e^\perp \cap r^\perp$ and $Y \subset e^\perp \cap (r')^\perp$, we can express this as
\begin{align*}
    \Pi_T\Pi_S =& \frac{ \cos(\varphi) h_1(\theta) +\sin(\varphi)(\la r, r'\rg)h_2(\theta)} {H(\theta)}  t\otimes \frac{h_1(\varphi)e+h_2(\varphi)r'}{H(\varphi)} \\ 
    &\quad + \frac{h_2(\theta)}{H(\theta)} t\otimes\pi_Yr+ \frac{\sin(\varphi)}{H(\varphi)} \pi_Z r'\otimes (h_1(\varphi)e+h_2(\varphi)r') + \pi_Z\pi_Y 
\end{align*}
so taking the trace gives
\begin{align*} 
    \mr{tr}(\Pi_T\Pi_S) =& \frac{ \bigl(\cos(\varphi) h_1(\theta) +\sin(\varphi)(\la r, r'\rg)h_2(\theta)\bigr) \bigl(\cos(\theta) h_1(\varphi) +\sin(\theta) (\la r, r'\rg)h_2(\varphi)\bigr)} {H(\theta)H(\varphi)} \\
    &\quad + \frac{\sin(\theta)  h_2(\theta)}{H(\theta)} |\pi_Yr|^2 + \frac{\sin(\varphi) h_2(\varphi)}{H(\varphi)} |\pi_Z r'|^2 + \mr{tr}(\pi_Y\pi_Z).
\end{align*} 
Since $h$ is an absolute norm, it is non-decreasing in each coordinate, so $h_1,h_2\geq 0$.
Euler's identity shows $0\leq\frac{\sin\theta\,h_2(\theta)}{H(\theta)}\leq 1$ and the same holds for $\varphi$.
Applying the inequalities
\[
    \mr{tr}(\pi_Y\pi_Z)
    \leq m-1-
    \max\{|\pi_Z r'|^2,|\pi_Yr|^2\},
    \qquad
    |\pi_Z r'|^2,\ |\pi_Yr|^2\leq1-(\la r, r'\rg)^2.
\]
to the trace shows
\begin{align*}
    \mr{tr}(\Pi_T\Pi_S) &\leq m-1 + \frac{ \bigl(\cos(\varphi) h_1(\theta)
      +\sin(\varphi) \la r,r' \rg h_2(\theta)\bigr) \bigl(\cos(\theta) h_1(\varphi)
      +\sin(\theta)  \la r,r' \rg h_2(\varphi)\bigr)} {H(\theta)H(\varphi)} \\
    &\quad+ \max\left\{ 0,  \frac{\sin(\theta)  h_2(\theta)}{H(\theta)} + \frac{\sin(\varphi) h_2(\varphi)}{H(\varphi)} -1 \right\}(1- \la r,r' \rg^2).
\end{align*}
The right-hand side, after subtracting $m-1$, is a convex quadratic function of $\la r, r'\rg$.  
If the max term is 0, then the quadratic coefficient is $\frac{\sin(\theta) \, h_2(\theta)}{H(\theta)} \frac{\sin(\varphi)h_2(\varphi)}{H(\varphi)} \geq 0$.

Otherwise, the coefficient is
\begin{align*}
\tfrac{\sin(\theta) \,h_2(\theta)}{H(\theta)} \tfrac{\sin\varphi\,h_2(\varphi)}{H(\varphi)} -\tfrac{\sin(\theta) \,h_2(\theta)}{H(\theta)} -\tfrac{\sin\varphi\,h_2(\varphi)}{H(\varphi)} +1 = \bigl( 1-\tfrac{\sin(\theta) \,h_2(\theta)}{H(\theta)} \bigr) \bigl( 1-\tfrac{\sin\varphi\,h_2(\varphi)}{H(\varphi)} \bigr) \geq 0
\end{align*}
using $0\leq \frac{\sin(\theta) \, h_2(\theta)}{H(\theta)}\leq 1$ from Euler's identity.

If  $0 < \theta,\varphi< \frac{\pi}{2}$, absolute symmetry gives $h_2(0)=h_1(\frac{\pi}{2})=0$, and $\frac{h_1(\theta)}{\cos(\theta)}, \frac{h_2(\theta)}{\sin(\theta)}$ extend continuously to their boundaries on $[0,\frac{\pi}{2}]$, as do all other identities.
The above derivative identities~\eqref{eqn:DerivativeEuler}, combined with the Euler identity and the assumption $H+H''>0$, imply that $h_1, h_2>0$.
Therefore, the quadratic coefficient is strictly positive in either case, so the right-hand side is maximized for $\la r,r'\rg\in\{-1,1\}$.
Using the convexity and $1$-homogeneity of $h$ and equation~\eqref{eqn:DerivativeEuler}, we can bound
\[
|\cos(\varphi)h_1(\theta) \pm\sin(\varphi)h_2(\theta)| \leq H(\varphi)  \qquad \text{and}\qquad | \cos(\theta) \, h_1(\varphi) \pm \sin(\theta) \, h_2(\varphi) |\leq H(\theta),
\]
so at the extremal values, $\la r,r'\rg=\pm1$, the final term containing $1-\la r,r'\rg^2$ and the preceding trace estimate gives $\mr{tr}(\Pi_T\Pi_S)\leq m$.
Because $H+H''>0$, the sublevel set $\{h < 1\}$ is strictly convex, so any supporting line meets the boundary at a unique point.
Equality in 
\[
|\cos(\varphi)h_1(\theta)\pm\sin(\varphi)h_2(\theta)|\leq H(\varphi)
\]
can only occur when $(\cos(\varphi),\pm\sin(\varphi))=\pm(\cos(\theta),\sin(\theta))$.
Together with the equality case in the projection estimate, equality can occur only when $T=S$, thus
\[
m-\mr{tr}(\Pi_T\Pi_S)>0 \qquad \text{for }T\neq S.
\]
To prove the uniform USAC estimate, we now consider $T_\ve = \mr{graph}(\ve X) \to T$ for some $X \in T_T\bG(N,m)$, which is a linear map $X:T\to T^\perp$.
For any rank-$m$ projections $P,Q$, $m-\mr{tr}(PQ)=\tfrac{1}{2}\mr{tr}((P-Q)^2)$.
We can then apply the above estimate to $\Pi_T$ and $\Pi_{T_\ve}$ showing
\[
 m-\mr{tr}(\Pi_T\Pi_{T_\ve})
 =\tfrac12\ve^2\mr{tr}((D\Pi_T[X])^2)+o(\ve^2).
\]
For an interior angle set $k=H'/H$ and
$U=W\cap(\bR r\oplus Z)^\perp$.
We decompose $X$ orthogonally into
\[
 X = \langle Xt,n\rangle n \otimes t + \pi_UXt\otimes t + n \otimes \pi_ZX^tn +\pi_UX|_Z.
\]
From our coordinate definitions, we compute $\dot\theta=\la Xt, n\rg$, $\dot t=\la Xt, n\rg n+\pi_UXt-\tan\theta\,\pi_ZX^tn$, and $\dot n=-\la Xt, n\rg t-\pi_ZX^tn+\cot\theta\,\pi_UXt$ and therefore
\[
 D\Pi_T[X]=X+X^t+\left(\frac{H'}{H}\right)'\la Xt, n\rg\,t\otimes n
             +\frac{H'}{H}\dot t\otimes n+\frac{H'}{H} t\otimes\dot n.
\]
Expanding the stress tensor~\eqref{eqn:axisymmetric-stress} for $T_\ve=\mr{graph}(\ve X)$ yields
\begin{equation}\label{eqn:axisymmetric-second-variation}
\begin{split}
    m-\mr{tr}(\Pi_T\Pi_{T_\ve}) &= \ve^2 \Biggl[ \frac{H(\theta)+H''(\theta)}{H(\theta)}
    |\la Xt,n\rg|^2+ \frac{h_2(\theta)}{H(\theta)\sin(\theta) } \bigl|\pi_{W\cap(\bR r\oplus Z)^\perp}Xt\bigr|^2\\
    &\qquad+ \frac{h_1(\theta)} {H(\theta)\cos(\theta)} \bigl| \pi_Z X^t n \bigr|^2 +\bigl\| \pi_{W\cap(\bR r\oplus Z)^\perp} X|_Z \bigr\|^2 \Bigg] +o(\ve^2).
\end{split}
\end{equation}
Each coefficient can be bounded uniformly using the property that $h$ is an absolute norm, meaning
$h(x,y)=h(x,-y)=h(-x,y)$, and therefore $h_2(0)=0$ and $h_1(\tfrac{\pi}{2})=0$.
Integrating the differentiated Euler identities computes
\begin{align*}
    h_1(\theta) &= \int_\theta^{\frac{\pi}{2}}
    \sin u\,[H(u)+H''(u)]\,du \geq \min_{[0,\pi/2]}(H+H'')\,\cos(\theta), \\
    h_2(\theta) &= \int_0^\theta \cos u\,[H(u)+H''(u)]\,du \geq \min_{[0,\pi/2]}(H+H'')\,\sin(\theta) .
\end{align*}
Therefore, if $H+H''>0$ on $[0,\pi/2]$, each term in
\eqref{eqn:axisymmetric-second-variation} has a uniform lower bound and orthogonally decompose $X$.
Therefore, $m - \textup{tr}(\Pi_T \Pi_{T_{\ve}}) \geq c \ve^2 \|X\|^2 + o(\ve^2)$, for some $c>0$ independent of $T$ and $X$.
In particular, we obtain $m - \textup{tr}( \Pi_T \Pi_S) \geq c \|T-S\|^2$ whenever $S$ is sufficiently close to $T$, uniformly in $T$.
Using the compactness of $\bG(N,m)$, we conclude that $\Psi$ is USAC.
For the converse, fix $r$ and $Z$.
We we can show the angular variation
\[
    T_\ve = \bR\bigl( \cos(\theta+\ve)e+\sin(\theta+\ve)r \bigr)\oplus Z
\]
where only the first term of
\eqref{eqn:axisymmetric-second-variation} is non-zero, so
\[
    m-\mr{tr}(\Pi_T\Pi_{T_\ve}) = \ve^2 H(\theta)^{-1} [ H(\theta)+H''(\theta)] +o(\ve^2).
\]
The USAC property implies that $H(\theta)+H''(\theta)>0$, completing the proof.

We now show that these anisotropic integrands support a Michael-Simon inequality by applying~\cite{anisotropic-michael-simon}*{Theorem 4.1}.
In fact, the concave barrier $\lambda_m(A):=\min_{|v|=1}\la Av,v\rg$ will demonstrate this.  
If $U\in\bG(W,m)$, then~\eqref{eqn:axisymmetric-stress} yields
\begin{equation}\label{eqn:horizontal-compression}
    \pi_U B_{\Psi_h}(T)|_U=\sin(\theta) \, h_2(\theta) (\pi_Ur)\otimes(\pi_Ur) + H(\theta)\pi_U\pi_Z|_U \geq 0 .
\end{equation}
For $\sin(\theta) >0$, the preceding inequalities give $h_2(\theta)>0$, and~\eqref{eqn:horizontal-compression} is positive definite for
$U=\bR r\oplus Z$. 
Thus, $\lambda_m$ has a strictly positive Haar average over $\bG(W,m)$ unless $e\in T$, where it vanishes.

We therefore must glue in a different concave barrier for the set $Z := \{T : e \in T\}$.
Consider the probability measure $\rho_e$ which is the pushforward of the Haar measure on $\bG(e^\perp, m-1)$ via the map $U \mapsto \bR e \oplus U$. 
On a relatively compact neighborhood of $Z$, we have
\[
B(T) := \int \lambda_m(E_P^t B_{\Psi_h}(T) E_P)\, d\rho_e(P)
\]
does not vanishes since $B_{\Psi_{h}}(T) = H(0) T$ for $T \in Z$, which is strictly positive.
Therefore,~\cite{anisotropic-michael-simon}*{Theorem 4.1 and Remark 1} demonstrates that we may combine these two region to a global measure and barrier providing a Michael-Simon inequality for $\Psi_h$.
\end{proof}

\begin{corollary}\label{cor:ha-integrands}
For every $a>1$, the norm
\begin{equation}\label{eqn:ha-example}
    h_a(x,y) := \tfrac{1}{a+1} \textstyle( \sqrt{a^2x^2 + y^2}  +\sqrt{x^2 + a^2 y^2})
\end{equation}
satisfies the conditions of Theorem~\ref{thm:axisymmetric-usac}.
Notably, the integrand $\Psi_{h_a}$ is USAC and supports a Michael-Simon inequality for every $2\leq m<N$.

Let $\Psi_g$ be any ellipsoidal area anisotropy induced by an arbitrary inner product $g$ on $\bR^N$.
Then,
\begin{equation}\label{eqn:far-from-ellipsoidal}
    \inf_{g = g^t > 0}\|\Psi_{h_a}-\Psi_g\|_{C^0(\bG(N,m))} = \tfrac{1}{2} \, \bigl( \tfrac{1}{a+1} \textstyle{\sqrt{2(a^2+1)}} - 1 \bigr).
\end{equation}
In particular, we have $\lim_{a \to \infty} \inf_{g = g^t} \| \Psi_{h_a} - \Psi_g \|_{C^0(\bG(N,m))} = \frac{\sqrt{2}-1}{2}$.
Consequently, $\Psi_a$ is far from the area functional under every ellipsoidal transformation.
\end{corollary}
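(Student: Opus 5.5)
The plan splits into two parts: first verify the hypotheses of Theorem~\ref{thm:axisymmetric-usac} for $h_a$, then compute the $C^0$ distance to ellipsoidal integrands by a matching lower and upper bound.

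\textbf{Hypotheses of Theorem~\ref{thm:axisymmetric-usac}.} Write $h_a=\frac{1}{a+1}(\phi_1+\phi_2)$ with $\phi_1(x,y)=\sqrt{a^2x^2+y^2}$ and $\phi_2(x,y)=\sqrt{x^2+a^2y^2}$.
\begin{itemize}
\item Each $\phi_i$ is a Euclidean norm composed with a positive diagonal linear map. Hence $h_a$ is convex, $1$-homogeneous, smooth on $\bR^2\setminus\{0\}$, and invariant under $x\mapsto -x$ and $y\mapsto -y$. So $h_a$ is an absolute norm.
\item Put $\Phi_i(\theta):=\phi_i(\cos\theta,\sin\theta)$. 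For an ellipse norm $\phi(x,y)=\sqrt{\alpha^2x^2+\beta^2y^2}$, a direct computation (the curvature of its unit circle) gives $\Phi+\Phi''=\alpha^2\beta^2/\Phi^3>0$.
\item Since $H=\frac{1}{a+1}(\Phi_1+\Phi_2)$, we get $H+H''=\frac{a^2}{a+1}(\Phi_1^{-3}+\Phi_2^{-3})>0$ on $[0,\frac{\pi}{2}]$.
\end{itemize}
Theorem~\ref{thm:axisymmetric-usac} then yields USAC and the Michael--Simon inequality for every $2\leq m<N$.

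\textbf{Range of $H$.} Set $\kappa:=\frac{\sqrt{2(a^2+1)}}{a+1}>1$. I will show $1\leq H\leq\kappa$, with $H(0)=H(\frac{\pi}{2})=1$ and $H(\frac{\pi}{4})=\kappa$.
\begin{itemize}
\item Lower bound: by Minkowski's inequality for the vectors $(a\cos\theta,\sin\theta)$ and $(\cos\theta,a\sin\theta)$, $\Phi_1+\Phi_2\geq\sqrt{(a+1)^2\cos^2\theta+(a+1)^2\sin^2\theta}=a+1$.
\item Upper bound: by Cauchy--Schwarz, $\Phi_1+\Phi_2\leq\sqrt{2(\Phi_1^2+\Phi_2^2)}=\sqrt{2(a^2+1)}$.
\end{itemize}
The endpoint values are a direct check. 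Consequently, the constant integrand $\Psi_g\equiv\frac{1+\kappa}{2}$, induced by a multiple of the Euclidean inner product, satisfies $\|\Psi_{h_a}-\Psi_g\|_{C^0}\leq\frac{\kappa-1}{2}$. This is the upper bound in~\eqref{eqn:far-from-ellipsoidal}.

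\textbf{Lower bound.} This is the main point. For an inner product $g$, $\Psi_g(T)^2=\la G\xi_T,\xi_T\rg$ is a quadratic form in the unit Plücker vector, where $G=\bigwedge^m g$.
\begin{itemize}
\item Pick a unit $r\perp e$ and an $(m-1)$-plane $Z\subset\{e,r\}^\perp$ with unit $(m-1)$-vector $\zeta$. This is possible since $m<N$.
\item Let $\xi_0=e\wedge\zeta$ and $\xi_1=r\wedge\zeta$, spanning planes $T_0$ and $T_1$ at angles $\theta=0$ and $\theta=\frac{\pi}{2}$.
\item Let $\xi_\pm=\frac{1}{\sqrt2}(\xi_0\pm\xi_1)=\frac{e\pm r}{\sqrt2}\wedge\zeta$. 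These are unit simple vectors spanning planes $T_\pm$ at angle $\theta=\frac{\pi}{4}$, so $\Psi_{h_a}(T_\pm)=\kappa$.
\item The parallelogram law for the form $G$ gives
\[
\Psi_g(T_+)^2+\Psi_g(T_-)^2=\Psi_g(T_0)^2+\Psi_g(T_1)^2 .
\]
\end{itemize}
Suppose $\|\Psi_{h_a}-\Psi_g\|_{C^0}=\delta$. If $\delta\geq\kappa$ there is nothing to prove. Otherwise the left side is at least $2(\kappa-\delta)^2$ and the right side is at most $2(1+\delta)^2$. Hence $\kappa-\delta\leq1+\delta$, that is, $\delta\geq\frac{\kappa-1}{2}$.

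Combining the two bounds proves~\eqref{eqn:far-from-ellipsoidal}. The limit as $a\to\infty$ follows from $\kappa\to\sqrt2$. I do not expect a serious obstacle. The only steps needing care are the curvature identity $\Phi+\Phi''=\alpha^2\beta^2/\Phi^3$ and checking that $\xi_\pm$ really are unit simple vectors at angle $\frac{\pi}{4}$, both of which are routine.
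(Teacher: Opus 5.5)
Your proposal is correct and follows the paper's proof essentially step for step. It uses the same computation $H+H''=\frac{a^2}{a+1}\bigl(\Phi_1^{-3}+\Phi_2^{-3}\bigr)>0$, the same parallelogram identity for the quadratic form $\Psi_g^2$ at the planes spanned by $e\wedge\zeta$, $r\wedge\zeta$ and $\frac{e\pm r}{\sqrt2}\wedge\zeta$ for the lower bound, and the same constant competitor $g=\bigl(\tfrac{1+\kappa}{2}\bigr)^{2/m}\mathrm{Id}$ for the upper bound; the only cosmetic difference is that you obtain $1\leq H\leq\kappa$ via Minkowski and Cauchy--Schwarz, whereas the paper uses concavity of $H$ in $\cos^2\theta$ and its symmetry about $\tfrac12$.
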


\begin{proof}
    For $\Psi_{h_a}$ and $H_a(\theta):=h_a(\cos\theta,\sin\theta)$, we can compute
\[
    H_a+H_a'' = \frac{a^2}{a+1} \left[ (a^2\cos^2 \theta+\sin^2 \theta)^{- \frac{3}{2}} + (\cos^2 \theta+a^2\sin^2 \theta)^{-\frac{3}{2}}\right]>0,
\]
so $\Psi_{h_a}$ is USAC and MS by applying Theorem~\ref{thm:axisymmetric-usac}.
Given a positive-definite symmetric matrix
$g\in\bR^{N\times N}$, let $\Psi_g(T) := \sqrt{\det\bigl(\la g v_i,v_j\rg\bigr)}$ where $v_1,\ldots,v_m$ is any Euclidean orthonormal basis of $T$.

Fix a unit length $r\perp e$ and $Z\in\bG((\bR e\oplus\bR r)^\perp,m-1)$.
Choose an orthonormal basis $z_2,\ldots,z_m$ of $Z$.
For every unit $v \in \bR e\oplus\bR r$, we have
\[
    \Psi_g(\bR v\oplus Z)^2
    =
    \det
    \begin{pmatrix}
    \la gv,v\rg
        & \la gv,z_2\rg & \cdots & \la gv,z_m\rg\\
    \la gz_2,v\rg
        & \la gz_2,z_2\rg & \cdots & \la gz_2,z_m\rg\\
    \vdots & \vdots & \ddots & \vdots\\
    \la gz_m,v\rg
        & \la gz_m,z_2\rg & \cdots & \la gz_m,z_m\rg
    \end{pmatrix}.
\]
Applying the parallelogram identity to $v_\pm=\frac{e\pm r}{\sqrt2}$ yields
\[
    \Psi_g(\bR v_+\oplus Z)^2
    +
    \Psi_g(\bR v_-\oplus Z)^2
    =
    \Psi_g(\bR e\oplus Z)^2
    +
    \Psi_g(\bR r\oplus Z)^2.
\]
For $\Psi_{h_a}$, we have $\Psi_{h_a}(\bR e\oplus Z) = \Psi_{h_a}(\bR r\oplus Z) =1$,
so
\[
    \Psi_{h_a}(\bR v_\pm\oplus Z) = h_a (\tfrac1{\sqrt2},\tfrac1{\sqrt2}) = \tfrac{\sqrt{2(a^2+1)}}{a+1}.
\]
Let $M_a := \frac{\sqrt{2(a^2+1)}}{a+1}$. 
For any positive-definite symmetric matrix $g$, let $\delta:=\|\Psi_{h_a}-\Psi_g\|_{C^0}$.
Suppose $\delta < M_a$.
Applying the parallelogram identity to $\frac{e \pm r}{\sqrt{2}}$ shows 
\[
\Psi_g(\bR v_\pm\oplus Z)\geq M_a-\delta, \qquad \Psi_g(\bR e\oplus Z),\Psi_g(\bR r\oplus Z)\leq1+\delta,
\]
so $2(M_a-\delta)^2\leq2(1+\delta)^2$ and $\delta\geq\frac{M_a-1}{2}$.
If $\delta \geq M_a$, the conclusion is immediate.
Therefore,
\[
    \inf_{g=g^t>0}\|\Psi_{h_a}-\Psi_g\|_{C^0}
    \geq \tfrac{1}{2} (M_a - 1).
\]
Conversely, for any plane $T$ and $\theta$ the angle between $T$ and $e$, we compute
\[
\Psi_{h_a}(T) = \frac{\sqrt{a^2\cos^2 \theta + \sin^2 \theta} + \sqrt{\cos^2 \theta + a^2\sin^2 \theta}}{a  + 1}
\]
which, as a function of $\cos^2 \theta$ is concave and symmetric about $\frac{1}{2}$, so it is minimized at $\theta = 0,\frac{\pi}{2}$ and uniquely maximized at $\theta = \frac{\pi}{4}$. 
Consequently, letting $g := \left(\frac{1 + M_a}{2}\right)^\frac{2}{m}\mr{Id}$ yields 
\[
\| \Psi_{h_a} - \Psi_g\|_{C^0} = \max \bigl\{\tfrac{1 + M_a}{2} - 1, M_a - \tfrac{1 + M_a}{2} \bigr\} = \tfrac{M_a - 1}{2}.
\]
Finally, taking this value of $g(a)$ as $a \to \infty$ proves the asymptotic distance claim.
\end{proof}

\bibliography{ref}

\end{document}